\numberwithin{equation}{section}
\numberwithin{table}{section}
\numberwithin{figure}{section}
\newtheorem{satz}{Theorem}[section]
\newtheorem{lemma}[satz]{Lemma}
\newtheorem{korollar}[satz]{Corollary}
\theoremstyle{definition}
\newtheorem{definition}[satz]{Definition}
\newtheorem{hyp}[satz]{Hypothesis}
\theoremstyle{remark}
\newtheorem{bemerkung}[satz]{Remark}
\theoremstyle{plain}
\newtheorem{proposition}[satz]{Proposition}
\setlist[enumerate]{align=left,itemindent=*,labelsep=12pt}
\newlist{proofenum}{enumerate}{1}
\setlist[proofenum]{label=\upshape(\roman*),itemindent=*,labelsep=-4pt,align=left,leftmargin=0em,itemsep=0em}
\newcommand{\C}{{\mathbb{C}}}
\newcommand{\N}{{\mathbb{N}}}
\newcommand{\R}{{\mathbb{R}}}
\newcommand{\BB}{{\mathscr{B}}}
\newcommand{\CC}{{\mathscr{C}}}
\newcommand{\NN}{{\mathscr{N}}}
\newcommand{\Aa}{{\mathcal{A}}}
\newcommand{\Cc}{{\mathcal{C}}}
\newcommand{\Gg}{{\mathcal{G}}}
\newcommand{\Hh}{{\mathcal{H}}}
\newcommand{\Tt}{{\mathcal{T}}}
\newcommand{\one}{{\mathbbm{1}}}
\newcommand{\ind}[1]{{\mathbbm{1}_{#1}}}
\renewcommand{\a}{\mathfrak{a}}
\renewcommand{\b}{\mathfrak{b}}
\renewcommand{\d}{{\mathrm{d}}}
\newcommand{\id}{\mathop{\textrm{\upshape{id}}}}
\newcommand{\BUC}{\mathop{\textrm{\upshape{BUC}}}}
\renewcommand{\epsilon}{\varepsilon}
\newcommand{\symb}{\mathop{\text{\upshape{symb}}}}
\newcommand{\dm}{d}
\renewcommand{\Re}{{\rm Re\, }}
\newcommand{\dist}{{\rm dist}}
\newcommand{\supp}{{\rm supp}}
\renewcommand{\epsilon}{\varepsilon}
\renewcommand{\a}{{\mathfrak{a}}}
\newcommand{\tr}{\mathop{\textrm{\upshape{tr}}}}
\newcommand{\<}{\left\langle}
\renewcommand{\>}{\right\rangle}
\renewcommand{\phi}{\varphi}
\newcommand{\eps}{\varepsilon}
\newcommand{\trinom}[3]{\begin{pmatrix} {#1}\\{#2}\\{#3} \end{pmatrix}}
\newcommand{\la}{\langle}
\newcommand{\ra}{\rangle}
\newcommand{\dx}{{\mathrm{d}}}
\newcommand{\Div}{\mathop{\textrm{\upshape{div}}}}
\newcommand{\uu}{\mathcal{u}}
\newcommand{\vv}{\mathcal{v}}
\newcommand{\ff}{\mathcal{f}}
\newcommand{\ee}{\mathcal{e}}
\renewcommand{\a}{{\mathfrak{a}}}
\renewcommand{\b}{{\mathfrak{b}}}
\newcommand{\pbo}{{\pagebreak[3]}}
\begin{document}
\title[Elliptic 4th-order operators with Wentzell boundary conditions]{Elliptic fourth-order operators with Wentzell boundary conditions on Lipschitz domains}
\author{David Plo\ss}
\address{D.\ Plo\ss, Karlsruhe Institute of Technology, Department of Mathematics, Englerstraße 2, 76131 Karlsruhe, Germany}
\email{david.ploss@kit.edu}
\date{March 2024}
\thanks{Funded by the Deutsche Forschungsgemeinschaft (DFG, German Research Foundation) –
Project-ID 258734477 – SFB 1173.}
\thanks{The author thanks Robert Denk and Markus Kunze for helpful comments and discussions.}
\begin{abstract}
For bounded domains $\Omega$ with Lipschitz boundary $\Gamma$, we investigate boundary value problems for elliptic operators with variable coefficients of fourth order subject to Wentzell (or dynamic) boundary conditions. Using form methods, we begin by showing general results for an even wider class of operators of type $$\Aa=\begin{pmatrix} B^*B & 0 \\ -\NN_\b B & \gamma \end{pmatrix},$$ where $B$ is associated to a quadratic form $\b$ and $\NN_b$ an abstractly defined co-normal Neumann trace. Even in this general setting, we prove generation of an analytic semigroup on the product space $\Hh:=L^2(\Omega) \times L^2(\Gamma)$. Using recent results concerning weak co-normal traces, we apply our abstract theory to the elliptic fourth-order case and are able to fully characterize the domain in terms of Sobolev regularity for operators in divergence form $B=-\Div Q \nabla$ with $Q \in C^{1,1}(\overline\Omega,\R^{d\times d})$ , also obtaining Hölder-regularity of solutions. Finally, we also discuss asymptotic behavior and (eventual) positivity. 
\end{abstract}

\keywords{Boundary value problem, Lipschitz boundary, Wentzell boundary condition, Generalized trace, Analytic semigroup, Eventual positivity}
\subjclass[2020]{35K35 (primary); 47A07, 47D06, 35B65 (secondary)}

\date{\today}

\maketitle

\section{Introduction}
 Wentzell, or dynamic boundary conditions, appear in a multitude of physical applications and pose a mathematically challenging problem. Given a bounded domain $\Omega$ with boundary $\Gamma$, they model the interchange of free energy of a physical system between $\Omega$ and $\Gamma$. The main issue with modeling this interchange is that the energy flux is  represented by an integral over the domain, which cannot ``see'' the boundary as it is a set of Lebesgue measure zero. This is usually resolved by considering functions in a product space, e.g., $\Hh:=L^2(\Omega)\times L^2(\Gamma)$, and a related operator $\Aa$ for which the action in the interior of the domain and on the boundary is decoupled (cf.\ \cite{AE96, AMPR03,Eng03}). The connection between interior and boundary is then encoded by a coupling condition in the definition of the domain of $\Aa$. 

A detailed discussion of a physical interpretation of these boundary conditions in comparison to classical ones can be found in~\cite{Gol06} for the two most famous PDE, the heat and the wave equation. Other instances where these boundary conditions occur are the Stefan problem with surface tension (see~\cite[Section~1]{Escher-Pruess-Simonett03}), and climate models including coupling between the deep ocean and the surface (see~\cite[Section~2]{DT08}) where they incorporate the external energy transported into the ocean by the sun. Furthermore, they are used in the Cahn--Hilliard equation describing spinodal decomposition of binary polymer mixtures (see~\cite[Section~1]{Racke-Zheng03}) in order to model effects close to the boundary, e.g., that one of the agents is more attracted to the boundary than the other, which might lead to further separation effects. Contrary to the first examples whose leading part is given by (variations of) the Laplacian, the Cahn--Hilliard equation is based on the Bi-Laplacian, an operator of order 4 which fits into the setting of the present work.

We are going to consider a general class of operators. As prototype and main application, we study the following system of fourth order:
\begin{alignat}{4}
  \partial_t u   +B(\alpha  B)u & =0 && \textnormal{ in }   (0,\infty)\times \Omega,\label{10-1}\\
  \tr B(\alpha  B)u -\beta  \partial^Q_\nu (\alpha  B) u -  \beta\delta \tr (\alpha B) u -\gamma \tr u   & = 0
   && \textnormal{ on }  (0,\infty)\times \Gamma,\label{10-2}\\
  \partial^Q_\nu u+ \delta \tr u   & = 0  && \textnormal{ on }   (0,\infty)\times \Gamma,\label{10-3}\\
  u |_{t=0} & = u_0   && \textnormal{ in } \Omega.\label{10-4}
\end{alignat}
Here $B$ is given by $B=-\Div Q \nabla$, where $Q \in C^{1,1}(\overline\Omega, \R^{\dm \times \dm})$ is uniformly positive definite and $\Omega$ is a Lipschitz domain. Let $\partial_\nu^Q$ denote its corresponding co-normal derivative given by $\<\nu,Q \nabla u\>$ (cf.\ \eqref{Def:WeakNeuA} below), and assume $\alpha,\beta,\gamma, \delta$ to be bounded, real-valued functions. The precise smoothness assumption of the coefficients will be specified later on (cf.\ Hypotheses~\ref{hyp1.1} and \ref{HypQ}). 
In~\eqref{10-1}--\eqref{10-4}, it is implicitly assumed that the initial value $u_0$ is sufficiently smooth to have a trace on the boundary and that this trace is used as an initial condition for $u$ on the boundary.

Note that, as Equation~\eqref{10-1} is of fourth order with respect to $x\in\Omega$, we have to impose two boundary conditions. Here, we have chosen the Robin boundary condition~\eqref{10-3} in addition to the Wentzell boundary condition~\eqref{10-2}. 

The main mathematical challenge in tackling Wentzell boundary conditions lies in the fact that the elliptic operator that governs the equation in the interior itself appears in the boundary condition, and the standard condition $B(\alpha B) u \in L^2(\Omega)$ is not sufficient to guarantee existence of the trace. In order to decouple this system and circumvent this issue, we rewrite the Wentzell boundary condition \eqref{10-2} as a dynamic boundary condition using $B(\alpha B)u =-\partial_t u$ from \eqref{10-1}. Then we rename $u$ to $u_1$ and replace the time derivative $\partial_t u_1$ in the boundary condition by the time derivative $\partial_t u_2$ of an independent function $u_2$ that lives on the boundary. Even though $u_2$ is formally independent of $u_1$, we think of $u_2$ as the trace of $u_1$; this condition will be incorporated into the domain of our operator $D(\Aa)$, later. We thus obtain the following decoupled \index{Wentzell boundary conditions!Decoupled system} version of~\eqref{10-1}--\eqref{10-4}:
\begin{alignat}{4}
  \partial_t u_1   +B(\alpha  B)u_1 & =0 && \textnormal{ in }   (0,\infty)\times \Omega,\label{10-5}\\
  \partial_t u_2 +\beta  \partial^Q_\nu (\alpha  B) u_1 +  \beta\delta \tr (\alpha B) u_1 + \gamma  u_2   & = 0
   && \textnormal{ on }  (0,\infty)\times \Gamma,\label{10-6}\\
  \partial_\nu^Q u_1+\delta u_2  & = 0  && \textnormal{ on }   (0,\infty)\times \Gamma,\label{10-7}\\
  u_1 |_{t=0} & = u_{1,0}  && \textnormal{ in } \Omega,\label{10-8}\\
  u_2 |_{t=0} & = u_{2,0} && \textnormal{ on } \Gamma.\label{10-9}
\end{alignat}
Note that, as $u_2$ is independent of $u_1$, we have to impose an additional initial condition for $u_2$. If, however, the initial value $u_0$ in~\eqref{10-4} is smooth enough, we can put $u_{1,0}=u_0$ and $u_{2,0}=u_0|_\Gamma$.
However, this coupling condition for the initial value is not mandatory. The Hilbert space theory established below will allow to prescribe non-continuous initial data, even the extreme case of $u_{1,0}=0$ and $u_{2,0} \in L^2(\Gamma)$ arbitrary is allowed. This is especially useful to model situations where in the beginning the entire energy only lives on the boundary and slowly dissipates into the interior of the domain over time.
Rewritten as a Cauchy problem, for $\uu=(u_1,u_2) \in \Hh$ we obtain
\begin{alignat}{4}
\partial_t \uu + \Aa \uu& =0 && \textnormal{ for } \uu(t,\cdot) \textnormal{ in } \Hh, \\
\uu|_{t=0} &= \uu_0 && \textnormal{ for } \uu_0 \in \Hh, 
\end{alignat}
where $\Aa$ is given by 
\begin{equation}\label{DefAGen}
\begin{pmatrix}
    B(\alpha B) & 0\\
    \beta (\partial_\nu^Q(\alpha B) + \delta \tr(\alpha B)) & \gamma
\end{pmatrix} 
\end{equation}
on a suitable domain $D(\Aa)$ that incorporates~\eqref{10-7} and the coupling condition $u_2=\tr u_1$. In order to construct a solution for~\eqref{10-5}--\eqref{10-9}, the main idea is to obtain an analytic semigroup (generated by $-\Aa$), whose smoothing effects will allow us to recover the original system with Wentzell boundary conditions. 
Using this decoupling idea and form methods to tackle Wentzell boundary conditions, has proven to be a suitable approach for the second-order case, e.g., the Laplace operator subject to Wentzell boundary conditions. A series of papers starting in 2003 has shown generation results concerning an analytic semigroup for the decoupled system on $L^2(\Omega)\times L^2(\Gamma)$, using the classical Beurling--Deny criteria \cite{AMPR03}. These results were then extended to the $L^p$-scale, and later also to general second-order elliptic operators on Lip\-schitz domains, where also Hölder continuity of the solution was deduced, see \cite{Nit11} and \cite{War13}. Under additional smoothness assumptions also spaces of continuous functions were considered in \cite{AMPR03}; see also \cite{EF05} and \cite{BE19} where generation of an analytic semigroup was shown in an abstract perturbation framework. For higher order elliptic operators the extension procedure to the $L^p$-scale does not work, because the Beurling--Deny criteria are in general not fulfilled (see also Proposition~\ref{nonpos}). Less results are available and they typically rely on being in a smooth setting. For fourth-order equations with sufficiently smooth coefficients in $C^4$-domains, it was shown in \cite[Theorem~2.1]{FGGR08} that the related operator in the product space is essentially self-adjoint. For the Cahn--Hilliard equation, classical well-posedness was shown in \cite[Theorem~5.1]{Racke-Zheng03} in the $L^2$-setting, and in \cite[Theorem~2.1]{Pruess-Racke-Zheng06} in the $L^p$-setting. Again the domain and the coefficients were assumed to be (sufficiently) smooth, and the methods do not carry over to Lipschitz domains.

In \cite{Denk-Kunze-Ploss21}, the Lipschitz-case was solved for the Bi-Laplacian using weak Green's formulae and the theory of quasi-boundary triples \cite[Chapter 8]{BHS20}. In the present paper, however, we choose a more abstract approach which deals with a larger class of systems and does not depend on the theory of boundary triples. It contains the results of \cite{Denk-Kunze-Ploss21} as a special case, also giving a simpler proof employing recent developments on co-normal derivatives in Lipschitz domains for operators with variable coefficients \cite{BGM22}.  

To that end, in Section~\ref{SecGenForm}, we begin by addressing very general forms $\b$ whose associated operators will take the role of $B$ in \eqref{DefAGen}. To proceed, we start by considering the Neumann case $\delta=0$ and define an abstract Neumann trace $\NN_b$ that fits into the form approach and is connected to Green's second formula. Afterwards, in Section~\ref{HoSys}, we investigate the quadratic form $\mathfrak a$ on the product space $\Hh=L^2(\Omega) \times L^2(\Gamma)$ to which the operator $\mathcal A$ is associated. Based on the analysis of that form, we can show that the operator $\mathcal A$ is self-adjoint and $-\Aa$ is the generator of a strongly continuous and analytic semigroup $(\mathcal T(t))_{t\ge 0}$ (Theorem~\ref{GenOP2}). This will also show that the operator $\mathcal{A}$ indeed governs a generalized version of~\eqref{10-5}--\eqref{10-9} with $\delta=0$, given by~\eqref{A10-5''}--\eqref{A10-9''}. We will explain that we can also obtain a solution of the Wentzell system in the original formulation generalizing ~\eqref{10-1}--\eqref{10-3} with initial condition~\eqref{10-4}. If $u_{2,0}$ is not the trace of $u_{1,0}$, there are some subtleties concerning the initial values, see Remark~\ref{r.explain}. 
In Section~\ref{id}, we return to the main application where $B$ is a second-order elliptic operator in divergence form, identifying the associated operator $B$ and its minimal and maximal realization, as well as the normal trace $\NN_b$ (Section~\ref{SecA1}). In Section~\ref{id2}, we finally collect our results for the fourth-order system, which culminate in Corollary~\ref{Cor:A3} where we precisely identify the operator $\mathcal A$ and its domain in terms of Sobolev regularity. After extending our results to the Robin case $\delta>0$, we obtain that the operator $\mathcal{A}$ indeed governs the system precisely as formulated in~\eqref{10-5}--\eqref{10-9}.

In Section~\ref{Prop}, we  briefly discuss higher regularity for smoother domains and coefficients (Section~\ref{Smooth3.5}) before undertaking further investigations of the operator $\Aa$ in the original setting. One of the main results of this section is Theorem~\ref{HoelReg}, which states that for every element $(u_1, u_2)$ of $D(\mathcal A^\infty)$ the function $u_1$ is H\"older continuous and $u_2$ is the trace of $u_1$. As the semigroup $\Tt$ is analytic, it follows that for positive time the solution of~\eqref{10-5}--\eqref{10-9} is H\"older continuous and satisfies the  Wentzell boundary condition in a pointwise sense. Moreover, this result implies regularity of the eigenfunctions of the operator $\Aa$ and is used later on.
In Section~\ref{spec}, we show that the operator $\mathcal{A}$ has compact resolvent and thus a decomposition into a basis consisting of eigenfunctions of $\mathcal{A}$. This allows us to describe the semigroup in terms of the  eigenfunctions and to characterize the asymptotic behavior of the semigroup. In particular, we study its positivity properties: It turns out that the generated semigroup is neither positive nor $L^\infty$-contractive (Proposition~\ref{nonpos}) as the operator does not satisfy the Beurling--Deny criteria. 
However, as shown for the for the semigroup generated by the Bi-Laplacian in \cite{Denk-Kunze-Ploss21}, in the case $\gamma=\delta=0$ our semigroup is again eventually positive in the sense of \cite{DGK16b} and \cite{DG18}
 (Theorem~\ref{EventPos}). We close the article by showing that the same abstract approach can be used to obtain abstract results for higher-order operators, e.g., $(-\Delta)^{4k}$. 
\section{The abstract setting}\label{Abstract}
Aim of this section is to establish a solution theory for Wentzell boundary conditions for higher-order operators which can be represented by nested forms, i.e.\ two quadratic forms where the operator associated to the first one is used to construct the second. We fix the following setting:

Let $\Omega \subset \R^\dm$ be a domain with Lipschitz boundary $\Gamma$.
 We denote the inner products in  $L^2(\Omega)$ and $L^2(\Gamma)$ by
\[
\la f,g \ra_\Omega \coloneqq \int_\Omega f\overline{g}\, \dx  x\quad \mbox{and} \quad \la f, g\ra_\Gamma \coloneqq \int_\Gamma f\overline{g}\, \dx S,
\]
respectively, and write $\|\cdot\|_\Omega$ and $\|\cdot\|_\Gamma$  for the induced norms.
We denote the standard Sobolev spaces by $H^s(\Omega)$ for $s \geq 0$.
By slight abuse of notation, we will also write
\[
\la \nabla u, \nabla v\ra_\Omega\coloneqq \int_\Omega \sum_{j=1}^\dm \partial_j u\overline{\partial_j v}\, \dx x
\]
whenever $u, v\in H^1(\Omega)$. For fractional orders we may either use complex interpolation, or, equivalently, restriction. For negative orders we employ duality.
Additionally for an elliptic operator of second-order $B$, we introduce the space $H^s_B(\Omega)$ for the space of functions $u\in H^s(\Omega)$ such that $Bu$ belongs to $L^2(\Omega)$ (cf.\ Definition~\ref{DefBDist}). We endow $H^s_B(\Omega)$ with the canonical norm
\[ \|u\|_{H^s_B(\Omega)}^2 \coloneqq \|u\|_{H^s(\Omega)}^2 + \|Bu\|_\Omega^2 \quad (u\in H^s_B(\Omega)).\]
The Dirichlet-trace on $C^\infty(\overline \Omega)$, defined by $u \mapsto u_{|\Gamma}$, and its extension to any Sobolev space $H^{s}(\Omega)$ for $s>\frac{1}{2}$ is denoted by $\tr$.

Taking a brief look back to the Bi-Laplacian case (cf.\ \cite{Denk-Kunze-Ploss21}), where the form $\a(u,v)=\<\Delta_N u, \Delta_N v\>$ is considered on the domain $$\{ \uu=(u_1,u_2) \in \Hh ~|~ u_1 \in D(\Delta_N), u_2=\tr u_1\},$$ we recall that its associated operator is given by 
\[\Aa=\begin{pmatrix}
    \Delta^2 & 0 \\ -\partial_\nu \Delta  & 0
\end{pmatrix}. \] In order to tackle the general system, in the form $\a$, we are going to replace the Neumann Laplacian $\Delta_N$ by a more general operator $B_N$. To that end, we introduce a second form that somehow operates on a ``lower level''. More precisely, $\Delta_N$ is naturally associated to the form $\b(u,v)=\<\nabla u,\nabla v\>_\Omega$ on $L^2(\Omega)$ with form domain $D(\b)=H^1(\Omega),$ so we may generalize this form. In order to distinguish between $\a$ and $\b$ terminologically, we will call $\a$ the primary form and $\b$ the subsidiary form.\index{Forms!subsidiary} \index{Forms!primary}

At first, we will establish our theory for quite general subsidiary quadratic forms $\b$ whose associated operators are not necessarily differential operators in divergence form or even of second order. 

\subsection{Abstract realizations of the lower-order operator}\label{SecGenForm}
 Recall, that if a form $\b: D(\b) \times D(\b) \rightarrow \C ~(D(\b)\subset H)$ is densely-defined, semi-bounded by $\lambda \in \R$, closed, and continuous in the sense of \cite[Chapter~1]{Ouh09}, its associated operator $A$ satisfies that $\lambda-A$ generates an analytic contraction semigroup on $\Hh$. We call such forms \emph{generating}. 
 Note that, in this terminology, $\b$ is semi-bounded by $\lambda$ if the shifted form $\b_\lambda(u,v)=\b(u,v)+\lambda\<u,v\>_H$ is accretive. Furthermore, $A$ will be self-adjoint if $\a$ is also symmetric.

\begin{definition}  \label{DefSubForm}\index{Forms!admissible}
 Consider the Hilbert space $H=L^2(\Omega)$. We call a form $\b: D(\b) \times D(\b) \rightarrow \C$ \textit{admissible}, if it is a generating, symmetric form on $H$ such that for some $\rho \in (0,1)$ \begin{equation}\label{TestDense}
    C_c^\infty(\Omega) \subseteq (D(\b),\|\cdot\|_\b) \subseteq H^{\frac{1}{2}+\rho}(\Omega)
\end{equation} holds, where the latter embedding is continuous and dense. 
\begin{bemerkung}
    The continuous embedding into the space $H^{1/2+\rho}(\Omega)$ is assumed to ensure existence of the Dirichlet trace. The space $H^{1/2+\rho}(\Omega)$ can be replaced by any space on which the Dirichlet trace exists and is bounded, and its range embeds densely into $L^2(\Gamma)$, as for example $H^{1/2}_\Delta$ or some variant of it, if such an embedding of the form domain is known. However, in this abstract setting, we want to avoid spaces depending on specific operators.  
\end{bemerkung}
Next, we introduce two operators, connected to the subsidiary form $\b$. We think of them as realizations of a certain “general” operator $B$ subject to Neumann or Dirichlet boundary conditions. While this is indeed true in the setting of elliptic differential operators on domains (cf.\ $B_{\max}$ in Definition~\ref{DefBDist}), we point out that in the abstract setting considered here, it is unclear which manner to define such an operator would be the most sensible. Therefore, in this section the operator $B_0^*$ will be used as a suitable substitute for the formally undefined operator $B$, which only appears terminologically in the following definition.
\end{definition}
\begin{definition}\label{DefBDBN}
Let $\b: D(\b) \times D(\b) \rightarrow L^2(\Omega)$ be an admissible form.
\begin{enumerate}[(i)]
    \item Denote by $\lambda_\b$ the maximal semi-bound $\lambda \in \R$ such that $\Re \b(u,u) \geq \lambda \|u\|_\Omega^2,$ i.e.\ we have $\|u\|_\b^2=\Re \b(u,u)+(1-\lambda_\b) \|u\|_\Omega^2$ for $u \in D(\b)$. 
    \item The operator $B_N$ associated to $\b$ on $L^2(\Omega)$ is called the \emph{Neumann realization of $B$}.
    \item The \emph{Dirichlet realization of $B$} is the associated operator to $\b_D$, the restriction of $\b$ to $\{u \in D(\b)~|~ \tr u=0\}$.
\end{enumerate}       
\end{definition}
\begin{proposition} \label{b0gen} If $\b$ is an admissible form, then $\b_D$ is generating and symmetric, so $B_D$ is well defined and self-adjoint. Furthermore, we have 
\begin{align}\label{RepDN} D(B_N)\cap D(B_D)=\{u \in D(B_N)~|~ \tr u=0\}=D(B_N)\cap H_0^{1/2+\rho}(\Omega)\end{align}
and \[B_N u=B_D u \textrm{~for~} u\in D(B_D)\cap D(B_N).\]    
\end{proposition}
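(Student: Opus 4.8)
The plan is to verify the three asserted facts in sequence, starting with the soft ones and leaving the description of the kernel of the trace for last.

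\textbf{Step 1: $\b_D$ is generating and symmetric.} Since $\b$ is admissible, it is in particular generating and symmetric. Symmetry is inherited by any restriction, so $\b_D$ is symmetric. For the generating property I would check the conditions one by one on the form domain $D(\b_D)=\{u\in D(\b): \tr u=0\}$. Semi-boundedness by $\lambda_\b$ is immediate because $\b_D$ is a restriction of $\b$ and the inequality $\Re\b(u,u)\ge \lambda_\b\|u\|_\Omega^2$ continues to hold on the smaller set. Continuity of $\b_D$ (in the sense of \cite[Chapter~1]{Ouh09}) is likewise inherited. The only point that needs an argument is that $D(\b_D)$ is \emph{closed} in $(D(\b),\|\cdot\|_\b)$ and dense in $H=L^2(\Omega)$. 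Closedness follows because $\tr\colon (D(\b),\|\cdot\|_\b)\to L^2(\Gamma)$ is continuous (this is exactly what the embedding $D(\b)\hookrightarrow H^{1/2+\rho}(\Omega)$ buys us, combined with boundedness of the Dirichlet trace on $H^{1/2+\rho}(\Omega)$), hence $D(\b_D)=\ker(\tr)$ is $\|\cdot\|_\b$-closed; a closed subspace of the Hilbert space $(D(\b),\|\cdot\|_\b)$ is itself a Hilbert space, so $\b_D$ is a closed form. Density in $L^2(\Omega)$ is immediate from \eqref{TestDense}: $C_c^\infty(\Omega)\subseteq D(\b)$ consists of functions with zero trace, so $C_c^\infty(\Omega)\subseteq D(\b_D)$, and $C_c^\infty(\Omega)$ is dense in $L^2(\Omega)$. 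By the standard form representation theorem quoted in the excerpt, the associated operator $B_D$ is well defined, and since $\b_D$ is symmetric it is self-adjoint.

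\textbf{Step 2: the chain of equalities in \eqref{RepDN}.} I would prove the two equalities separately.
\begin{itemize}
\item[(a)] $\{u\in D(B_N): \tr u=0\}=D(B_N)\cap H_0^{1/2+\rho}(\Omega)$. Here I use $D(B_N)\subseteq D(\b)\subseteq H^{1/2+\rho}(\Omega)$, and the characterization of $H_0^{1/2+\rho}(\Omega)$ inside $H^{1/2+\rho}(\Omega)$ as the kernel of the Dirichlet trace. (Since $1/2+\rho>1/2$ and $1/2+\rho<3/2$ for $\rho\in(0,1)$, $H_0^{s}(\Omega)=\overline{C_c^\infty(\Omega)}^{H^s}$ coincides with $\{u\in H^s(\Omega):\tr u=0\}$ on a Lipschitz domain; this is the standard trace theory and I would just cite it.) Hence for $u\in D(B_N)$, $\tr u=0$ is equivalent to $u\in H_0^{1/2+\rho}(\Omega)$.
\item[(b)] $D(B_N)\cap D(B_D)=\{u\in D(B_N): \tr u=0\}$. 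The inclusion $\subseteq$ is clear since $D(B_D)\subseteq D(\b_D)$ forces $\tr u=0$. For $\supseteq$, take $u\in D(B_N)$ with $\tr u=0$; then $u\in D(\b_D)$, and I must show $u\in D(B_D)$, i.e.\ that $v\mapsto \b_D(u,v)=\b(u,v)$ is represented by an $L^2$-function on the smaller test space $D(\b_D)$. But $u\in D(B_N)$ means $\b(u,v)=\langle B_N u,v\rangle_\Omega$ for all $v\in D(\b)$, a fortiori for all $v\in D(\b_D)\subseteq D(\b)$. So the representing function is $B_N u\in L^2(\Omega)$, giving $u\in D(B_D)$ with $B_D u=B_N u$. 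This simultaneously proves the last displayed identity $B_N u=B_D u$ on $D(B_D)\cap D(B_N)$. (Note that the equality $D(B_D)\cap D(B_N)=D(B_N)\cap D(B_D)$ is trivial; only the displayed action-equality needs the argument, and it falls out of the same computation.)
\end{itemize}

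\textbf{Main obstacle.} None of the steps is deep; the only place demanding care is the claim that $\tr$ is continuous on $(D(\b),\|\cdot\|_\b)$, which underlies both the closedness of $\b_D$ and the identification of zero-trace elements with $H_0^{1/2+\rho}$. This is guaranteed by admissibility (the continuous embedding $D(\b)\hookrightarrow H^{1/2+\rho}(\Omega)$ together with the boundedness of the Dirichlet trace $H^{1/2+\rho}(\Omega)\to L^2(\Gamma)$, valid on Lipschitz domains for $1/2+\rho>1/2$), so it should be invoked explicitly. A secondary subtlety is that for $\supseteq$ in (b) one must not confuse ``$u\in D(\b_D)$ and $\b_D(u,\cdot)$ bounded on $L^2$'' with membership in $D(B_D)$ — the representation theorem already gives exactly this once we exhibit $B_N u$ as the representing vector, so the argument is clean. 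I would therefore expect the proof to be short, with the bulk of the writing going into Step~1.
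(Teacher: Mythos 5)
Your proposal is correct and follows essentially the same route as the paper's proof: symmetry, semi-boundedness, continuity and density of $\b_D$ are inherited, closedness comes from the continuity of the Dirichlet trace on $(D(\b),\|\cdot\|_\b)$ via the embedding into $H^{1/2+\rho}(\Omega)$, the identification with $H_0^{1/2+\rho}(\Omega)$ uses the kernel-of-trace characterization on Lipschitz domains, and the inclusion $\{u\in D(B_N):\tr u=0\}\subseteq D(B_D)$ is obtained by restricting the representing identity $\b(u,\cdot)=\langle B_Nu,\cdot\rangle_\Omega$ to $D(\b_D)$. No gaps.
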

\begin{proof}
The restricted form $\b_D$ is clearly symmetric, and densely defined as the test functions still lie in $D(\b_D)$. By definition, we have $\|\cdot\|_\b=\|\cdot\|_{\b_D}$ on $D(\b_D)$, from which continuity and semi-boundedness follow. In order to show that ($D(\b_D),\|.\|_{\b_D}$) is complete as well, take a Cauchy-sequence $u_n$ with respect to $\|\cdot\|_{\b_D}=\|\cdot\|_\b$. As $\b$ is a closed form, $u_n$ converges to some $u \in D(\b)$, and by continuous embedding also in $H^{1/2+\rho}(\Omega)$. By continuity of the Dirichlet trace, the traces converge as well, whence $\tr u=0$ and $u \in D(\b_D)$ as desired. Hence it is also generating.

We verify the second part in \eqref{RepDN} first: In Lipschitz domains we have the identity $\{u \in H^s(\Omega) ~|~ \tr u =0\}=H^s_0(\Omega)$ for all $s \in (1/2,3/2)$ (cf.\ \cite[Equation~(3.7)]{BGM22}). So we directly obtain $D(B_N) \cap H_0^{1/2+\rho}(\Omega) =\{u \in D(B_N)~|~ \tr u =0\}$ due to $D(B_N) \subset D(\b) \subset H^{1/2+\rho}(\Omega)$. 

For the remaining identity assume $u \in  D(B_N)$ with $\tr u =0$. Hence there is an $f_N \in L^2(\Omega)$ such that, for all $v \in D(\b)$, $\<f_N,v\>_\Omega=\b(u,v)$ holds. But now $u \in D(\b_D)$ and in particular for all $v \in D(\b_D) \subset D(\b)$, we have \[\<f_N,v\>_\Omega=\b(u,v)=\b_D(u,v),\] which shows $u \in D(B_D)$ and $B_D u=f_N=B_N u.$ The converse is trivial. \qedhere
\end{proof}

\begin{bemerkung}~
\begin{enumerate}[(i)]
\item Note that $b_D$ itself is not admissible, as $D(\b_D)$ cannot be embedded continuously into $H^{1/2+\rho}(\Omega)$ due to the continuity of the trace, which is the reason why pure Dirichlet-Wentzell boundary conditions (without Neumann term) can not be handled via this method.
\item In the following, we are going to assume that $D(B_D)\cap D(B_N)$ is dense in $L^2(\Omega)$, which is useful to define realizations of $B$ that are in a sense minimal (or maximal) and still densely defined. We will see below, in Proposition~\ref{Prop:Dense}, that this is a very natural assumption for considering the primary form $\a$, as it will ensure that it will be densely defined as well. The simplest way to ensure this density will be to demand that $D(B_D) \cap D(B_N)$ contains the test functions. However, this excludes the case of operators of the form $-\Div Q \nabla$ for $Q$ only in $L^\infty(\Omega,\R^{\dm \times \dm})$. As we assume more regularity on $Q$ in this article, anyway, this will be no restriction for us, though.
\end{enumerate}
\end{bemerkung}
Next, we introduce two notions of generalized weak Neumann traces, the first of which is connected to a generalization of Green's first formula, while the second is closer related to the abstract notion of the associated operator and Green's second formula. 
\begin{definition}\label{DefN-NN}
Assume that $D(B_D)\cap D(B_N)$ is dense in $L^2(\Omega)$.
Define $B_0={B_N|}_{D(B_D) \cap D(B_N)}.$
Let 
$N^{\b}: D(N^{\b})  \subseteq L^2(\Omega) \rightarrow L^2(\Gamma)$ be the linear operator defined by
\interdisplaylinepenalty=10000
\begin{align*}
  D(N^{\b})\coloneqq&\{u \in D(\b)\cap D(B_0^*)~|~ \\
  &\hphantom{spacesp}\exists g \in L^2(\Gamma)\, \forall v \in D(\b): \<B_0^*u,v\>_\Omega-\b(u,v)=\<g, \tr v\>_\Gamma\}
\end{align*}
\allowdisplaybreaks
 and $N^\b u\coloneqq g.$   
Let furthermore $\NN^\b: D(\NN)  \subseteq L^2(\Omega) \rightarrow L^2(\Gamma)$ be the linear operator defined by
\begin{align*}
    D(\NN^{\b})\coloneqq&\{u \in D(B_0^*)~|~\\
    &\hphantom{spacesp} \exists g \in L^2(\Gamma) \forall v \in D(B_N): \<B_0^*u,v\>_\Omega-\<u,B_Nv\>_\Omega=\<g, \tr v\>_\Gamma\}
\end{align*} and $\NN^\b u\coloneqq g.$  
\end{definition}
We want to point out a subtlety concerning the signs: In comparison to the usual weak Neumann trace (cf.\ \eqref{Def:WeakNeuA} below) $N^\b$ and $\NN^\b$ generalize $-\partial_\nu$, as $B_0^*$ is  a generalized version of $-\Delta$.

We begin with a very simple observation that will prove to be quite useful to show equality of different traces.
\begin{lemma}\label{Magic}
Consider two linear operators $S_1: D(S_1) \subseteq V \rightarrow W$, $S_2: D(S_2) \subseteq V \rightarrow W$ on a vector spaces $V,W$. If $S_1 \subseteq S_2$, $S_1$ is surjective, and $\ker(S_2) \subseteq D(S_1)$,
then $S_1=S_2$. 
\end{lemma}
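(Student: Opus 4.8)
\textbf{Proof plan for Lemma~\ref{Magic}.}
The plan is to show the two inclusions $S_1 \subseteq S_2$ (given) and $S_2 \subseteq S_1$, where the latter means $D(S_2) \subseteq D(S_1)$ together with agreement on that domain (agreement then being automatic from $S_1 \subseteq S_2$). So the real content is $D(S_2) \subseteq D(S_1)$.

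First I would take an arbitrary $x \in D(S_2)$ and consider $S_2 x \in W$. Since $S_1 \subseteq S_2$ and $S_1$ is surjective, there exists $y \in D(S_1)$ with $S_1 y = S_2 x$; because $S_1 \subseteq S_2$, this gives $S_2 y = S_1 y = S_2 x$, hence $S_2(x - y) = 0$, i.e.\ $x - y \in \ker(S_2)$. By hypothesis $\ker(S_2) \subseteq D(S_1)$, so $x - y \in D(S_1)$. Since $y \in D(S_1)$ and $D(S_1)$ is a linear subspace, $x = (x-y) + y \in D(S_1)$. This proves $D(S_2) \subseteq D(S_1)$, and combined with $S_1 \subseteq S_2$ we conclude $D(S_1) = D(S_2)$ and $S_1 = S_2$.

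There is essentially no obstacle here: the statement is a purely algebraic diagram-chase that only uses that $D(S_1)$ is a linear subspace and that $\ker(S_2)$ makes sense, which it does since $W$ is a vector space. The one point worth a sentence is that we do \emph{not} need injectivity of $S_1$ anywhere — the surjectivity of $S_1$ provides a preimage, and the kernel hypothesis corrects for the ambiguity in that choice. I would simply write the argument out in two or three lines exactly as above.
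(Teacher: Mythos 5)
Your argument is correct and is essentially identical to the paper's proof: both use surjectivity of $S_1$ to find a preimage $y$ of $S_2x$, deduce $x-y\in\ker(S_2)\subseteq D(S_1)$, and conclude $x\in D(S_1)$. Nothing to add.
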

\begin{proof}
   Let $u \in D(S_2)$. As $S_1$ is surjective there is is some $v \in D(S_1)$ with $S_2v=S_1v=S_2 u.$ So $u-v \in \ker(S_2) \subseteq D(S_1)$ whence also $u=v+(u-v) \in D(S_1)$ and $S_2u=S_1u$. This shows $S_2 \subseteq S_1$ and thus equality. 
\end{proof}
We come to our first main result, which shows that the traces $N_\b$ and $\NN_\b$ are well defined. 

\begin{satz} \label{GenOP1}
In the setting of Definition~\ref{DefN-NN} we have the following.
\begin{enumerate}[\upshape(i)]
    \item $B_0$ is a densely defined, symmetric, and closed operator. Therefore, $B_0^*$ and $B_0^{**}$ are well defined and we have $B_0 \subseteq B_0^*$ as well as $B_0^{**}=B_0.$
     \item $\tr(D(B_N))$ is dense in $L^2(\Gamma)$.
     \item $N^{\b}$ and $\NN^{\b}$ are well defined, linear operators. We have $\NN^{\b}|_{D(\b)\cap D(\NN^\b)}=N^\b$ and $\ker N^\b=\ker \NN^\b=D(B_N),$ which also shows that $N^\b$ and $\NN^\b$ are densely defined. 
\item  If $N^\b$ is surjective, we have $N^\b=\NN^\b$.
\end{enumerate} 

\end{satz}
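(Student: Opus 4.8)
The plan is to prove the four items of Theorem~\ref{GenOP1} essentially in the order stated, since each relies on the previous ones.

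\textbf{Item (i).} Density of $D(B_0) = D(B_D) \cap D(B_N)$ is assumed. Symmetry: for $u,v \in D(B_0)$ we have $\<B_0 u, v\>_\Omega = \b(u,v) = \overline{\b(v,u)} = \overline{\<B_0 v, u\>_\Omega} = \<u, B_0 v\>_\Omega$, using symmetry of $\b$ and that both $u,v \in D(\b)$. For closedness: since $B_N$ is self-adjoint (hence closed) and $B_0 = B_N|_{D(B_0)}$, a Cauchy sequence $u_n$ in the graph norm converges in the graph of $B_N$ to some $u$ with $B_N u_n \to B_N u$; one then checks $u \in D(B_D)$ using that $\b_D$ is closed and the graph-norm convergence controls $\|u_n\|_{\b}$ (as in the proof of Proposition~\ref{b0gen}), so $u \in D(B_0)$. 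Then $B_0$ symmetric and densely defined gives $B_0 \subseteq B_0^*$, and $B_0$ closed gives $B_0^{**} = \overline{B_0} = B_0$.

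\textbf{Item (ii).} I would argue by contradiction: if $\tr(D(B_N))$ is not dense in $L^2(\Gamma)$, pick $0 \neq g \perp \tr(D(B_N))$. The idea is to produce an element of $D(B_0)$ (equivalently, of $D(B_N)$ with zero trace) witnessing a contradiction, or more directly to note that $D(\b) \supseteq D(B_N)$ and the trace of $D(\b)$ is dense in $L^2(\Gamma)$ by admissibility (the embedding $D(\b) \hookrightarrow H^{1/2+\rho}(\Omega)$ is dense and the trace on $H^{1/2+\rho}$ has dense range), so it suffices to approximate traces of general form-domain elements by traces of elements of $D(B_N)$. Concretely, given $u \in D(\b)$, solve $B_N w = 0$-type resolvent problems: for $\mu > \lambda_\b$ the map $\b_\mu$ is coercive, and one can find $w \in D(B_N)$ with $\tr w = \tr u$ by minimizing $\|\cdot\|_\b$ over the affine subspace $\{v \in D(\b) : \tr v = \tr u\}$ — the minimizer $w$ satisfies $\b_\mu(w,v) = 0$ for all $v \in D(\b_D)$, hence $w \in D(B_N)$ — so $\tr(D(B_N)) \supseteq \tr(D(\b))$, which is dense. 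This is the step I expect to require the most care, precisely because the abstract form setting gives little to grab onto beyond admissibility and the variational characterization of $B_N$.

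\textbf{Item (iii).} Well-definedness of $N^\b$ and $\NN^\b$ amounts to uniqueness of $g$: if $\<g,\tr v\>_\Gamma = 0$ for all $v \in D(\b)$ (resp.\ all $v \in D(B_N)$), then $g = 0$ because $\tr(D(\b))$ (resp.\ $\tr(D(B_N))$, by item (ii)) is dense in $L^2(\Gamma)$; linearity is then immediate. For the inclusion $\NN^\b|_{D(\b) \cap D(\NN^\b)} = N^\b$: if $u \in D(\NN^\b) \cap D(\b)$, then for $v \in D(B_N) \subseteq D(\b)$ we have $\<u, B_N v\>_\Omega = \b(v,u)^- \cdots$ — more precisely, since $u \in D(\b)$ we may replace $\<u, B_N v\>_\Omega$ by $\b(u,v)$ (using $v \in D(B_N)$, $u \in D(\b)$ and symmetry of $\b$), so the defining identity for $\NN^\b$ becomes the defining identity for $N^\b$ restricted to $v \in D(B_N)$, and one extends to all $v \in D(\b)$ by density of $D(B_N)$ in $(D(\b), \|\cdot\|_\b)$ together with continuity of $\b$ and of the trace — hence $u \in D(N^\b)$ with $N^\b u = \NN^\b u$; conversely $N^\b u = \NN^\b u$ backwards. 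Finally $\ker N^\b = \ker \NN^\b = D(B_N)$: if $u \in D(\b) \cap D(B_0^*)$ satisfies $\<B_0^* u, v\>_\Omega = \b(u,v)$ for all $v \in D(\b)$, then by definition of the associated operator $u \in D(B_N)$ and $B_N u = B_0^* u$; conversely $u \in D(B_N)$ has $B_0^* u = B_N u$ (as $B_N \subseteq B_0^*$, since $B_0 \subseteq B_N$ self-adjoint forces $B_N = B_N^* \subseteq B_0^*$) and satisfies both defining identities with $g = 0$, so lies in both kernels. Density of $D(N^\b), D(\NN^\b)$ follows since they contain $D(B_N)$, which is dense.

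\textbf{Item (iv).} Apply Lemma~\ref{Magic} with $V = L^2(\Omega)$, $W = L^2(\Gamma)$, $S_1 = N^\b$, $S_2 = \NN^\b$: we have $S_1 \subseteq S_2$ from item (iii), $S_1$ surjective by hypothesis, and $\ker S_2 = D(B_N) = \ker S_1 \subseteq D(S_1)$ by item (iii); the lemma yields $N^\b = \NN^\b$. \qedhere
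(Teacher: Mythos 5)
Your items (i), (iii) and (iv) follow essentially the paper's route (closedness via the closed operators $B_D$, $B_N$; well-definedness of the traces from (ii); the extension from $v\in D(B_N)$ to $v\in D(\b)$ by the form-core property; and Lemma~\ref{Magic} for (iv)). The problem is item (ii), where your argument contains a genuine error. The minimizer $w$ of $\b_\mu(v,v)$ over the affine set $\{v\in D(\b) : \tr v=\tr u\}$ satisfies $\b_\mu(w,v)=0$ only for $v$ in the tangent space $D(\b_D)$; this makes $w$ a ``$\mu$-harmonic'' element (form-orthogonal to $D(\b_D)$), which is \emph{not} the same as $w\in D(B_N)$. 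Membership in $D(B_N)$ requires $\b(w,v)=\langle f,v\rangle_\Omega$ for \emph{all} $v\in D(\b)$ with a single $f\in L^2(\Omega)$, i.e.\ it encodes the Neumann boundary condition, which the harmonic extension violates for generic boundary data. Already for $\b(u,v)=\langle\nabla u,\nabla v\rangle_\Omega$ your $w$ solves $-\Delta w+\mu w=0$ with $\tr w=\tr u$ and has nonzero Neumann trace, hence $w\notin D(\Delta_N)$. Consequently the asserted inclusion $\tr(D(B_N))\supseteq \tr(D(\b))$ is false in general: for the Neumann Laplacian on a smooth domain one has $\tr D(\Delta_N)\subseteq H^{3/2}(\Gamma)$ while $\tr(H^1(\Omega))=H^{1/2}(\Gamma)$.

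The fix is that you never need to hit the trace exactly, only to approximate it in $L^2(\Gamma)$. This is the paper's argument: approximate $f\in L^2(\Gamma)$ by some $u_\Gamma\in H^{\rho}(\Gamma)$, lift it to $u_\Omega\in H^{1/2+\rho}(\Omega)$ using surjectivity of the trace, approximate $u_\Omega$ in $H^{1/2+\rho}(\Omega)$ by an element of $D(\b)$ (dense embedding from admissibility), and then approximate that element in $\|\cdot\|_\b$ by an element of $D(B_N)$ (form core); continuity of $\tr$ on $(D(\b),\|\cdot\|_\b)\hookrightarrow H^{1/2+\rho}(\Omega)$ transports all of this to $L^2(\Gamma)$. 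A secondary, smaller gap: in (iii) you prove $\ker N^\b\subseteq D(B_N)$ and $D(B_N)\subseteq\ker N^\b\cap\ker\NN^\b$, but not $\ker\NN^\b\subseteq D(B_N)$; an element of $\ker\NN^\b$ need not lie in $D(\b)$ a priori, so your argument does not apply to it. The paper closes this by noting that $\NN^\b u=0$ means exactly $\langle B_0^*u,v\rangle_\Omega=\langle u,B_Nv\rangle_\Omega$ for all $v\in D(B_N)$, i.e.\ $u\in D(B_N^*)=D(B_N)$ by self-adjointness. You do need this inclusion, since your application of Lemma~\ref{Magic} in (iv) uses $\ker\NN^\b\subseteq D(N^\b)$.
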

\pbo
\begin{proof}~
\begin{proofenum}[~~\upshape(i)]
\item The density follows by assumption, the closedness follows as both $B_D$ and $B_N$ are closed by default (as $\b$ and $\b_D$ are generating) and coincide on the intersection. Furthermore, as a restriction of a self-adjoint operator, $B_0$ has to be symmetric.
\item Let $f \in L^2(\Gamma)$ and $\eps >0$. As $H^{\rho}(\Gamma)$ is dense in $L^2(\Gamma)$, we find a function $u_\Gamma \in H^{\rho}(\Gamma)$ with $\|u_\Gamma-f\|_\Gamma^2 \leq \eps$. Because $\tr\colon H^{1/2+\rho}(\Omega)\to H^{\rho}(\Gamma)$ is bounded (denote its operator norm by $M$) and surjective (cf.\ \cite[Equation~2.7]{GM08}), we find a function $ u_\Omega \in H^{1/2+\rho}(\Omega)$ with $\tr u_\Omega = u_\Gamma$.
As $\b$ is admissible, the domain of the subsidiary form $D(\b)$ is densely and continuously embedded in $H^{1/2+\rho}(\Omega)$. Hence, there is a function $\hat u_\Omega \in D(\b)$ satisfying $$\|\hat u_\Omega - u_\Omega\|^2_{H^{1/2+\rho}(\Omega)}\leq M^{-2}\epsilon.$$ As for any generating form it is known that the domain of the associated operator is a form core (cf.\ \cite[Lemma 1.25]{Ouh09}), one may further approximate and even find a function $\bar u_\Omega \in D(B_N)$ such that \[\|\bar u_\Omega - \hat u_\Omega\|^2_{H^{1/2+\rho}(\Omega)}\leq C\|\bar u_\Omega -\hat u_\Omega\|^2_\b \leq M^{-2}\epsilon. \]
Altogether, we have \begin{align*} 
\| \tr \bar u_\Omega - f\|^2_\Gamma &\leq 2\|\tr u_\Omega-f\|_\Gamma^2+2\|\tr u_\Omega - \tr \bar u_\Omega\|_\Gamma^2\\ & \leq 2\eps+2 M^2\|\bar u_\Omega-\hat u_\Omega+\hat u_\Omega- u_\Omega\|^2_{H^{1/2+\rho}(\Omega)}\leq 10 \eps \end{align*} as desired.
\item The linearity of the operators is obvious. Concerning the well-definedness, assume there were two elements $g_1,g_2 \in L^2(\Gamma)$ satisfying the defining conditions, respectively. Then we have $\<g_1,\tr v\>_{\Gamma}=\<g_2,\tr v\>_{\Gamma}$ in particular for all $v \in D(B_N)$, and hence $\<g_1-g_2,\tr v\>_\Gamma=0.$ But as $\tr(D(B_N))$ is dense in $L^2(\Gamma)$ due to (ii), this implies $g_1=g_2$. 
For $u \in D(\b)$, $v \in D(B_N)$ we have $\b(u,v)=\overline{\b(v,u)}=\overline{\<B_Nv,u\>_\Omega}=\<u,B_Nv\>_\Omega.$ This shows $N^\b \subseteq \NN^\b$. 

Concerning the restriction, we assume $u \in D(\NN^\b) \cap D(\b).$ Then, as before, there is a $g \in L^2(\Gamma)$ such that, for all $v \in D(B_N) \subseteq D(\b)$, $\<B_0^*u,v\>_\Omega-\<u,B_Nv\>_\Omega=\<g,\tr v\>_\Gamma$ holds. As $u \in D(\b)$ and $v \in D(B_N)$, this implies the $L^2(\Gamma)$-function $g$ also satisfies 
\begin{equation}\label{eq:extend}
    \<B_0^*u,v\>_\Omega-\b(u,v)=\<g,\tr v\>_\Gamma
\end{equation} for all $v \in D(B_N)$. However, $D(B_N)$ is a form core for $\b$, so for any $v \in D(\b)$ there is a sequence  $(v_n)_n \in D(B_N)$ with $v_n \rightarrow v$ with respect to $\|\cdot\|_\b$ and due to the admissibility of $\b$ also in $H^{1/2+\rho}(\Omega),$ whence the trace converges as well. Using this approximation, Formula \eqref{eq:extend} can be extended to all $v \in D(\b),$ which indeed proves $\NN^\b|_{D(\b) \cap D(\NN^\b)}=N^\b.$

Next we show $N^\b$ (and thus $\NN^\b$) is densely defined. As $B_0 \subseteq B_N$ and $B_N$ is self-adjoint, we have $B_N=B_N^* \subseteq B_0^*$ which exists due to (i). Hence for $u \in D(B_N)$ (which is a dense subset of $L^2(\Omega)$) we have $\<B_0^*u,v\>_\Omega-\b(u,v)=\<B_0^*u,v\>_\Omega-\<u,B_Nv\>_\Omega=\<B_Nu,v\>_\Omega-\<u,B_Nv\>_\Omega=0$ for all $v \in D(B_N)$. So $N^\b u=0$ for any $u \in D(B_N)$. Furthermore, if $u \in D(\NN^\b)$ and $\NN^\b u=0$, then  for all $v \in D(B_N)$ we have $\<B_0^*u,v\>_\Omega-\<u,B_Nv\>_\Omega=0$. This, however, is the definition of $u \in D(B_N^*)$ and and shows $B_N^*u=B_0^*u.$ As $B_N$ is self-adjoint, this means $u \in D(B_N).$ Hence $D(B_N) \subseteq \ker N^\b \subseteq  \ker \NN^\b \subseteq D(B_N)$, which shows equality, and in particular that both operators are densely defined.

\item This is an immediate consequence of (iii) and Lemma~\ref{Magic}.\qedhere
\end{proofenum}
\end{proof}

\subsection{The system on the product space}\label{HoSys}
Next we introduce a primary form, which will be connected to the generalized system of~\eqref{10-5}--\eqref{10-9}, i.e
 \begin{alignat}{4}
  \partial_t u_1   +B_0^*(\alpha  B_N)u_1 & =0 && \textnormal{ in }   (0,\infty)\times \Omega,\label{A10-5''}\\
  \partial_t u_2 -\beta  \NN^{\b}(\alpha  B_N) u_1 + \gamma  u_2   & = 0
   && \textnormal{ on }  (0,\infty)\times \Gamma,\label{A10-6''}\\
  \NN^{\b} u_1 & = 0  && \textnormal{ on }   (0,\infty)\times \Gamma,\label{A10-7''}\\
  u_1 |_{t=0} & = u_{1,0}  && \textnormal{ in } \Omega,\label{A10-8''}\\
  u_2 |_{t=0} & = u_{2,0} && \textnormal{ on } \Gamma.\label{A10-9''}
\end{alignat} 
Throughout, we assume the following.
\begin{hyp}\label{hyp1.1} \index{Hypotheses! On the matrix $Q$ (A1)--(A3)} \index{Operators!positive definite}
Let $\Omega \subseteq \R^\dm$ be a bounded domain with Lipschitz boundary $\Gamma$. Consider $\alpha \in L^\infty(\Omega,\R)$ and $\beta, \gamma, \delta \in L^\infty(\Gamma,\R)$ such that there exists a constant $\eta>0$ with $\alpha \geq \eta$ almost everywhere on $\Omega$ and $\beta \geq \eta$ almost everywhere on $\Gamma$. Furthermore, let $\delta \geq 0$.
\end{hyp}
\begin{definition}\label{DefForm}
 Assume Hypothesis~\ref{hyp1.1} and recall Definition~\ref{DefBDBN}.
 \begin{enumerate}[(i)]
\item Let $\Hh\coloneqq L^2(\Omega,\lambda_\dm)\times L^2(\Gamma,\beta^{-1} \mathrm{dS})$ be the Hilbert space, where $\lambda_\dm$ denotes the $\dm$-dimensional Lebesgue measure and $\mathrm{d}S$ the surface measure on $\Gamma$, endowed with the canonical inner product 
\begin{equation}\label{prodH}
\<\uu,\vv\>_{\Hh}=\<u_1,v_1\>_\Omega+\<u_2,v_2\>_{\Gamma,\beta},
\end{equation}
where $\uu=(u_1,u_2),\vv=(v_1,v_2) \in \Hh$ and $$\<u_2,v_2\>_{\Gamma,\beta}=\<\beta^{-1}u_2,v_2\>_{\Gamma}=\int_\Gamma \beta^{-1}(x)u_2(x)\cdot \overline{v_2(x)} \mathrm{d}S.$$
\item Let $D(B_D)\cap D(B_N)$ be dense in $L^2(\Omega)$. Then, we define the primary form $\a: D(\a) \times D(\a) \rightarrow \C$ as
\[\a(\uu,\vv):=\<\alpha B_N u_1, B_N v_1\>_\Omega+\<\gamma u_2,v_2\>_{\Gamma,\beta}\] for all $\uu, \vv \in D(\a)$ where
 \[D(\a):=\{\uu=(u_1,u_2) \in \Hh ~|~ u_1 \in D(B_N), u_2=\tr u_1 \}. \]
 \end{enumerate}  
\end{definition}

\begin{proposition} \label{Prop:Dense}
In the situation of Definition~\ref{DefForm}~(ii), the primary form $\a$ is densely defined.
\end{proposition}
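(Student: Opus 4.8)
The goal is to show that $D(\a)$ is dense in $\Hh = L^2(\Omega)\times L^2(\Gamma)$. Since the norm on $\Hh$ is equivalent to the standard product norm (because $\beta$ is bounded above and below by positive constants under Hypothesis~\ref{hyp1.1}), it suffices to approximate an arbitrary pair $(f_1,f_2)\in L^2(\Omega)\times L^2(\Gamma)$ in the standard product norm by elements of $D(\a)$. The first move is to decouple the two components: given $\eps>0$, I would separately find $\uu=(u_1,\tr u_1)\in D(\a)$ whose first component is $\eps$-close to $f_1$ in $L^2(\Omega)$ and then correct the boundary component with a second element of $D(\a)$ whose first component is small in $L^2(\Omega)$ while its trace is $\eps$-close to $f_2-\tr u_1$ in $L^2(\Gamma)$. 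Adding the two gives an element of $D(\a)$ that is $O(\eps)$-close to $(f_1,f_2)$.

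For the first step, recall that $D(B_N)$ is dense in $L^2(\Omega)$ (it is the domain of the operator associated to the generating form $\b$, hence dense), so pick $u_1\in D(B_N)$ with $\|u_1-f_1\|_\Omega\le\eps$; then $(u_1,\tr u_1)\in D(\a)$ by definition of $D(\a)$. For the second (boundary) step, I want an element $w\in D(B_N)$ with $\|w\|_\Omega$ arbitrarily small but $\|\tr w - h\|_\Gamma$ small, where $h:=f_2-\tr u_1\in L^2(\Gamma)$. This is exactly the kind of statement proved inside Theorem~\ref{GenOP1}(ii): there one shows $\tr(D(B_N))$ is dense in $L^2(\Gamma)$ by lifting $h$ (after an $H^\rho(\Gamma)$-approximation) to $u_\Omega\in H^{1/2+\rho}(\Omega)$ via the bounded surjective trace, then approximating $u_\Omega$ by $\hat u_\Omega\in D(\b)$ in the $H^{1/2+\rho}(\Omega)$-norm (using admissibility), and finally approximating $\hat u_\Omega$ by $\bar u_\Omega\in D(B_N)$ in the form norm, which controls the $H^{1/2+\rho}$-norm and hence the trace. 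The only addition needed here is to also keep $\|w\|_\Omega$ small: one can arrange this by first choosing the lift $u_\Omega$ to have small $L^2(\Omega)$-norm — e.g. multiply a fixed lift by a cutoff concentrated near $\Gamma$, or simply observe that since the trace is surjective onto $H^\rho(\Gamma)$ one may take a lift with norm controlled only by $\|h\|_{H^\rho(\Gamma)}$ in $H^{1/2+\rho}$ and then note that the $L^2(\Omega)$-norm of the subsequent approximants $\bar u_\Omega$ is not automatically small. Cleaner: set $w:=\bar u_\Omega - P\bar u_\Omega$ is not available; instead, I would invoke that $D(B_D)\cap D(B_N)$ is dense in $L^2(\Omega)$ (the standing hypothesis in Definition~\ref{DefForm}(ii)) to subtract off a correction $z\in D(B_D)\cap D(B_N)\subseteq D(B_N)$ with $\tr z=0$ and $\|z-\bar u_\Omega\|_\Omega\le\eps$, giving $w:=\bar u_\Omega - z\in D(B_N)$ with $\tr w=\tr\bar u_\Omega\approx h$ and $\|w\|_\Omega\le\eps$. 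Then $(w,\tr w)\in D(\a)$ is the desired boundary corrector.

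Putting it together, $\uu+(w,\tr w)=(u_1+w,\tr u_1+\tr w)\in D(\a)$ approximates $(f_1,f_2)$ to within a constant multiple of $\eps$ in $\Hh$, which proves density. The main obstacle is the boundary-correction step: one needs an element of $D(B_N)$ that is simultaneously small in $L^2(\Omega)$ and has a prescribed (approximate) boundary trace, and the natural lifts do not come with control of the interior $L^2$-norm for free. The device of subtracting an element of $D(B_D)\cap D(B_N)$ (whose existence and density is precisely the hypothesis under which $\a$ is defined, and which has zero trace) resolves this cleanly; this is presumably the reason the density of $D(B_D)\cap D(B_N)$ was flagged as "a very natural assumption for considering the primary form $\a$" in the remark following Proposition~\ref{b0gen}.
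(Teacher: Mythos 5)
Your proposal is correct and follows essentially the same route as the paper: both arguments rest on the density of $\tr(D(B_N))$ in $L^2(\Gamma)$ from Theorem~\ref{GenOP1}~(ii) together with subtracting an element of $D(B_D)\cap D(B_N)$ (which has zero trace by Proposition~\ref{b0gen}) to kill the interior $L^2$-norm of the boundary corrector. The paper merely organizes this as showing $L^2(\Omega)\times\{0\}$ and $\{0\}\times L^2(\Gamma)$ each lie in $\overline{D(\a)}$ and then using that the closure is a vector space, which is your explicit two-term decomposition in disguise.
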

\begin{proof}~
We may assume without loss of generality that $\beta = \one$, otherwise switch to an equivalent norm. Next we exploit the density of $D(B_N)\cap D(B_D)$ in $L^2(\Omega)$. As $\big(D(B_N)\cap D(B_D)\big) \times\{0\} \subseteq D(\a)$, we have $L^2(\Omega)\times\{0\} \subseteq
\overline{D(\a)}.$ 
In order to show $\{0\}\times L^2(\Gamma) \subseteq
\overline{D(\a)},$ we use Theorem~\ref{GenOP1}~(ii) which yields that $\tr D(B_N)$ is dense in $L^2(\Omega)$. Hence, given a function $f \in L^2(\Gamma)$ and some number $\eps>0$, there is an element $\bar u_1$ of $D(B_N)$ such that $\|\tr \bar u_1 -f\|^2_\Gamma<\eps.$  Finally, we pick a function $w \in D(B_N)\cap D(B_D)$ such that $\|\bar u_1 - w\|_\Omega^2 \leq \eps$ and put
$\uu = (\bar u_1 - w, \tr (\bar u_1 -w)) = (\bar u_1 - w, \tr \bar u_1)$. Then, by construction,
we have \begin{align*}
   \| \uu - (0, f)\|^2_{\Hh}=\|\bar u_1-w\|_\Omega^2+\|\tr \bar u_1 - f\|_\Gamma^2 \leq 2\eps. 
\end{align*} 
As $f$ was arbitrary,
$\{0\}\times L^2(\Gamma) \subseteq \overline{D(\a)}$. 
Since $\overline{D(\a)}$ is a vector space, we may combine our two results and obtain $\overline{D(\a)} =\Hh$.
\end{proof}

\begin{satz} \label{GenOP2}
Assume we are in the situation in Definition~\ref{DefForm} -- including the density from part (ii). Then for $B_0$, $B_N$, $N^{\b}$, and $\NN^{\b}$ defined as in Definition~\ref{DefN-NN}, we have the following:
\begin{enumerate}[\upshape(i)]
   \item  $\a$ is a generating, symmetric form. Hence the operator $\Aa$ associated to $\a$ on $\Hh$ is self-adjoint and $-\Aa$ generates an analytic semigroup $\Tt$ on $\Hh$.
    \item $\Aa$ is given by \[\Aa=\begin{pmatrix}
    B_0^* (\alpha B_N) & 0 \\ -\beta\NN^{\b} (\alpha B_N) & \gamma
\end{pmatrix} \] on $$D(\Aa)=\{\uu \in \Hh ~|~ u_1 \in D(B_N), \alpha B_N u_1 \in D(\NN^{\b}), u_2=\tr u_1\}.$$  
\item  In particular, for $\uu_0=(u_{1,0},u_{2,0}) \in \Hh$ the Cauchy problem
    \eqref{A10-5''}--\eqref{A10-9''}
    possesses a unique solution, which is given by $\uu(t)=\Tt(t)(u_{1,0},u_{2,0})$ for $t>0$. If $N^{\b}$ is additionally surjective, we may replace $\NN^{\b}$ by $N^{\b}$ in (ii) and \eqref{A10-5''}--\eqref{A10-9''}.
\end{enumerate}

\end{satz}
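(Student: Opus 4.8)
The plan is to verify the hypotheses of the standard form-generation theorem (as recalled at the start of Section~\ref{SecGenForm}) for the primary form $\a$, then identify the associated operator by a direct computation, and finally deduce the Cauchy-problem statement from semigroup theory. For part (i), symmetry of $\a$ is immediate since $\alpha$ and $\gamma$ are real-valued and $B_N$ is self-adjoint; density of $D(\a)$ was already established in Proposition~\ref{Prop:Dense}. It remains to check that $\a$ is closed, continuous, and semi-bounded. For semi-boundedness note $\a(\uu,\uu)=\langle\alpha B_N u_1,B_N u_1\rangle_\Omega+\langle\gamma u_2,u_2\rangle_{\Gamma,\beta}\geq \eta\|B_N u_1\|_\Omega^2-\|\gamma\|_\infty\|u_2\|_{\Gamma,\beta}^2$, and $\|u_2\|_{\Gamma,\beta}^2=\|\tr u_1\|_{\Gamma,\beta}^2$ can be controlled by $\|u_1\|_{H^{1/2+\rho}(\Omega)}^2$ (admissibility of $\b$ gives $D(\b)\hookrightarrow H^{1/2+\rho}(\Omega)$ and hence boundedness of $\tr$ there), which in turn is bounded by $C(\|B_N u_1\|_\Omega^2+\|u_1\|_\Omega^2)$ using that $\|\cdot\|_\b$ and the graph norm of $B_N$ on $D(B_N)$ are comparable with $\|u_1\|_\b^2\le \|B_N u_1\|_\Omega\|u_1\|_\Omega+(1-\lambda_\b)\|u_1\|_\Omega^2$; an interpolation/Young argument absorbs the $\|B_N u_1\|_\Omega$ term with small constant. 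This simultaneously yields continuity of $\a$ with respect to the form norm $\|\uu\|_\a^2\simeq \|B_N u_1\|_\Omega^2+\|\uu\|_\Hh^2$. Closedness of $\a$ then follows because a $\|\cdot\|_\a$-Cauchy sequence $\uu_n$ has $u_{1,n}$ Cauchy in the graph norm of the (closed) operator $B_N$, hence convergent in $D(B_N)$, hence convergent in $H^{1/2+\rho}(\Omega)$ so that the traces converge and the coupling condition $u_2=\tr u_1$ persists in the limit. Thus $\a$ is generating and symmetric, and the cited form theorem gives self-adjointness of $\Aa$ together with analyticity of the semigroup generated by $-\Aa$.

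For part (ii), I would compute the associated operator directly from its definition: $\uu\in D(\Aa)$ with $\Aa\uu=\ff=(f_1,f_2)$ iff $\uu\in D(\a)$ and $\a(\uu,\vv)=\langle\ff,\vv\rangle_\Hh$ for all $\vv\in D(\a)$, i.e.
\[
\langle\alpha B_N u_1,B_N v_1\rangle_\Omega+\langle\gamma u_2,v_2\rangle_{\Gamma,\beta}=\langle f_1,v_1\rangle_\Omega+\langle f_2,v_2\rangle_{\Gamma,\beta}
\]
for all $v_1\in D(B_N)$ with $v_2=\tr v_1$. First restrict to $v_1\in D(B_N)$ with $\tr v_1=0$; by Proposition~\ref{b0gen} such $v_1$ are exactly $D(B_N)\cap D(B_D)$, which is dense in $L^2(\Omega)$, so the identity $\langle\alpha B_N u_1,B_N v_1\rangle_\Omega=\langle f_1,v_1\rangle_\Omega$ shows $\alpha B_N u_1\in D(B_0^*)$ with $B_0^*(\alpha B_N u_1)=f_1$ (here one uses that $B_0=B_N|_{D(B_N)\cap D(B_D)}$ and $B_0 v_1=B_N v_1$ for such $v_1$). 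Plugging this back and using $\langle\alpha B_N u_1,B_N v_1\rangle_\Omega=\langle B_0^*(\alpha B_N u_1),v_1\rangle_\Omega-\big(\langle B_0^*(\alpha B_N u_1),v_1\rangle_\Omega-\langle\alpha B_N u_1,B_N v_1\rangle_\Omega\big)$ for general $v_1\in D(B_N)$, the general test-function identity becomes
\[
\langle B_0^*(\alpha B_N u_1),v_1\rangle_\Omega-\langle\alpha B_N u_1,B_N v_1\rangle_\Omega=\langle\gamma u_2-f_2,\,\tr v_1\rangle_{\Gamma,\beta}=\langle\beta^{-1}(\gamma u_2-f_2),\,\tr v_1\rangle_\Gamma
\]
for all $v_1\in D(B_N)$; unwinding the weight $\beta^{-1}$, this is precisely the defining condition for $\alpha B_N u_1\in D(\NN^\b)$ with $\NN^\b(\alpha B_N u_1)=\beta^{-1}(\gamma u_2-f_2)$, i.e.\ $f_2=\gamma u_2-\beta\,\NN^\b(\alpha B_N u_1)$. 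Conversely, reading these equivalences backwards shows any such $\uu$ lies in $D(\Aa)$ with the claimed action, which gives the stated formula and domain. The surjectivity addendum follows from Theorem~\ref{GenOP1}~(iv): if $N^\b$ is surjective then $N^\b=\NN^\b$, so one may replace $\NN^\b$ by $N^\b$ throughout, noting that on $D(\NN^\b)\supseteq D(\Aa)$'s relevant component the two agree.

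For part (iii), the abstract Cauchy problem $\partial_t\uu+\Aa\uu=0$, $\uu(0)=\uu_0$, has the unique classical solution $\uu(t)=\Tt(t)\uu_0$ for $t>0$ by analyticity of $\Tt$ (so $\Tt(t)\uu_0\in D(\Aa)$ for $t>0$ even when $\uu_0\in\Hh\setminus D(\Aa)$), uniqueness being standard for generators of $C_0$-semigroups; translating the componentwise action of $\Aa$ from part (ii) into the PDE system shows that this is exactly the system~\eqref{A10-5''}--\eqref{A10-9''}, where \eqref{A10-7''} encodes $\alpha B_N u_1\in D(\NN^\b)$ only in the sense that membership in $D(\Aa)$ already forces $B_N u_1$ to exist, while the Neumann-type condition on $u_1$ itself is built into $u_1\in D(B_N)$, i.e.\ $N^\b u_1=0$; and \eqref{A10-9''} is the separate boundary initial datum permitted by the product-space setup. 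The main obstacle I anticipate is the bookkeeping in part (ii): carefully separating the "interior" test functions (those with zero trace, giving the $B_0^*$ identity) from the "boundary" test functions (giving the $\NN^\b$ identity), and keeping track of the measure-weight $\beta^{-1}$ on $\Gamma$ so that the co-normal term comes out with the correct factor $\beta$ and sign — recalling the sign convention flagged after Definition~\ref{DefN-NN} that $\NN^\b$ generalizes $-\partial_\nu$. Everything else is routine form theory.
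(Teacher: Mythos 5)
Your proposal is correct and follows essentially the same route as the paper: verify that $\a$ is generating via the closedness argument through the graph norm of $B_N$ and the trace continuity on $H^{1/2+\rho}(\Omega)$, identify $\Aa$ by testing first against zero-trace elements ($D(B_0)\times\{0\}$) to extract $B_0^*(\alpha B_N u_1)=f_1$ and then against general $\vv\in D(\a)$ to read off the $\NN^\b$-condition, and conclude (iii) by standard semigroup theory together with Theorem~\ref{GenOP1}~(iv). The only cosmetic difference is your semi-boundedness argument, which detours through a trace estimate and Young's inequality where the paper simply observes that $\|u_2\|_{\Gamma,\beta}^2\le\|\uu\|_\Hh^2$ makes the bound $\a(\uu,\uu)\ge-\|\gamma\|_\infty\|\uu\|_\Hh^2$ immediate.
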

\begin{proof}~
\begin{proofenum}[~~\upshape(i)]
    \item We begin by showing that $\a$ is a generating, symmetric form. As $\b$ is admissible, we have $D(\b) \subseteq H^{1/2+\rho}(\Omega)$, hence also $D(B_N) \subseteq H^{1/2+\rho}(\Omega)$ and the condition $\tr u_1= u_2$ makes sense.  
The density of $D(\a)$ has been shown in Proposition~\ref{Prop:Dense}.
Because of $\gamma \in L^\infty(\Gamma)$ the form is semi-bounded due to
$$\a(\uu,\uu)=\<\sqrt{\alpha}B_N u_1,\sqrt{\alpha}B_N u_1\>_\Omega+\<\gamma u_2,u_2\>_{\Gamma,\beta} \geq -\|\gamma\|_\infty \|\uu\|^2_\Hh.$$
The symmetry is trivial as $\alpha, \beta, \gamma$ are real-valued.
Next we consider the induced norm $\|u\|^2_\a=\a(\uu,\uu)+(1+\|\gamma\|_\infty)\|\uu\|^2_\Hh.$ With respect to this norm the form is continuous as
$$|\a(\uu,\vv)|\leq \|\sqrt{\alpha}B_Nu_1\|_\Omega\|\sqrt{\alpha}B_Nv_1\|_\Omega +\|\gamma\|_\infty \|u_2\|_{\Gamma,\beta} \|v_2\|_{\Gamma,\beta}\leq 2 \|\uu\|_\a \|\vv\|_\a.$$
Note that by definition we have $\|\sqrt\alpha B_N u_1\|^2 \leq \|u\|_\a^2$, as well as $\|\gamma\|_\infty\|u_2\|_{\Gamma,\beta}^2 \leq \|u\|_\a^2.$
Finally, we show the closedness of the form. 
Let $(\uu_n)_n \subseteq  D(\a)$ be a $\|\cdot\|_\a$-Cauchy sequence, where $\uu_n = (u_1^n, u_2^n)$.
We have to prove that this sequence converges with respect to $\|\cdot\|_\a$. Let us first note that because $\alpha$ is bounded from below by $\eta>0$, for a certain constant $C$, we have
\[
\|u_1\|_{B}^2 \leq \frac{1}{\eta} \|\sqrt{\alpha} B_N u_1\|_\Omega^2+\|u_1\|_\Omega^2 \leq C \|\uu\|_\a^2
\]
whenever $\uu = (u_1, u_2) \in D(\a)$. It follows that $(u_1^n)_n$ is a Cauchy sequence with respect to $\|\cdot\|_{B}$. As  $B_N$ is closed, we find some $u\in D(B_N)$ such that
$u_1^n \to u$ in $L^2(\Omega)$ and $B_N u_1^n\to B_N u$ in $L^2(\Omega)$. 
Next observe that for $u\in D(B_N) \subseteq D(\b)$, by definition of the associated operator and Young's inequality, we have
\begin{align*}
\|u\|^2_{H^{1/2+\rho}(\Omega)} &\leq C \|u\|_\b^2 = C(\b(u,u)+(1-\lambda_\b)\|u\|^2_\Omega)  = C((1-\lambda_\b)\|u\|_\Omega^2 +\<B_Nu,u\>_\Omega) \\
&\leq \widetilde C(\|B_N u\|_\Omega^2 + \|u\|_\Omega^2)=\widetilde C \|u\|_{B}^2
\end{align*}

for some constant $\widetilde C\geq 1$.
Combining this with the above, we observe that $u_1^n$ is also convergent in $H^{1/2+\rho}(\Omega)$ whence, by the continuity of the trace, $u_2^n = \tr u_1^n\to \tr u$ in $L^2(\Gamma)$. Setting $\uu = (u, \tr u)$, we see that
$\uu \in D(\a)$ and $\uu_n \to \uu$ with respect to $\|\cdot\|_\a$.
This proves closedness of the form. Hence $\a$ is a generating, symmetric form with a corresponding associated self-adjoint operator $\Aa$ such that $-\Aa$ generates an analytic semigroup on $\Hh$.
\item At first we define \[\Cc=\begin{pmatrix}
    B_0^* (\alpha B_N) & 0 \\ -\beta\NN^{\b}(\alpha B_N) & \gamma
\end{pmatrix} \] on $$D(\Cc)=\{\uu \in \Hh ~|~ u_1 \in D(B_N), \alpha B_N u_1 \in D(\NN^{\b}), u_2=\tr u_1\}.$$ We want to show $\Cc=\Aa.$ We begin by showing $\Cc \subseteq \Aa.$ So let $\uu \in D(\Cc) \subseteq D(\a)$, i.e.\ $u_1 \in D(B_N)$, $\alpha B_N u_1 \in D(\NN^{\b})$ and $\tr u_1=u_2.$ Then we have for all $\vv \in D(\a)$
\begin{align*}
\a(\uu,\vv)&=\<\alpha B_Nu_1,  B_Nv_1\>_\Omega+\<\gamma u_2,v_2\>_{\Gamma,\beta}\\
&=\<B_0^*(\alpha B_N u_1),  v_1\>_\Omega-\<\NN^{\b} (\alpha B_N u_1),\tr v_1\>_\Gamma+\<\gamma u_2,v_2\>_{\Gamma,\beta}\\
&=\<B_0^* (\alpha B_N u_1),  v_1\>_\Omega+\<-\beta \NN^{\b} (\alpha B_N u_1)+\gamma u_2,v_2\>_{\Gamma,\beta}=\<\Cc \uu, \vv\>_\mathcal{H}.
\end{align*}
For the reverse direction let $\uu \in D(\Aa)$ and $\Aa \uu=\ff.$ Then $\uu \in D(\a)$ and for any $\vv \in D(\a)$ we have $\a(\uu,\vv)=\<\ff, \vv\>_\Hh$. In particular, for all $\vv \in D(B_0)\times\{0\}$ (and thus for all $v_1 \in D(B_0)$) we have $$\<f_1,v_1\>_\Omega=\<\ff,\vv\>_\Hh=\a(\uu,\vv)=\<\alpha B_Nu_1, B_Nv_1\>=\< \alpha B_Nu_1, B_0 v_1\>_\Omega.$$ This shows that $\alpha B_N u_1$ is in $D(B_0^*)$ and $f_1=B_0^* (\alpha B_N u_1)$ by definition of the adjoint. 
So for all $v_1 \in D(B_N)$ we have 
\begin{align*}\<f_2,\tr v_1\>_{\Gamma,\beta}&=\a(\uu,\vv)-\<f_1,v_1\>_\Omega\\&=\<\alpha B_N u_1,B_N v_1\>_\Omega-\<B_0^* (\alpha B_N u_1),  v_1\>_\Omega+\<\gamma u_2,\tr v_1\>_{\Gamma,\beta}\end{align*} or 
$$\<\beta^{-1}(f_2-\gamma u_2),\tr v_1\>_{\Gamma}=\<\alpha B_N u_1,B_N v_1\>_\Omega-\<B_0^* (\alpha B_N u_1),  v_1\>_\Omega$$ for all $v_1 \in D(B_N),$ which shows $\alpha B_Nu_1 \in D(\NN^{\b})$ (and $\uu \in D(\CC)$) as well as $$-\NN^{\b}(\alpha B_N u_1)=\beta^{-1}(f_2-\gamma u_2)$$ or, equivalently, $f_2=-\beta\NN^{\b}(\alpha B_N u_1)+\gamma u_2,$ which shows $\Cc \uu=\ff=\Aa \uu.$ 
\item
This follows from (i) and (ii) by standard semigroup theory. For the last part we use Theorem~\ref{GenOP1}~(iv)
 \qedhere
\end{proofenum}
\end{proof}
\begin{bemerkung}\label{OrgSys}
Theorem~\ref{GenOP2}~(iii) states that the semigroup $\Tt$ governs the system~\eqref{A10-5''}--\eqref{A10-9''}. We observe that $u_1$ also solves the corresponding non-decoupled problem with Wentzell boundary conditions
\begin{alignat}{4}
  \partial_t u   +B_0^*(\alpha B_N)u & =0 && \textnormal{ in }   (0,\infty)\times \Omega,\label{A10-1''}\\
  \tr B_0^*(\alpha  B_N)u +\beta  \NN^{\b}(\alpha  B_N) u -\gamma \tr u   & = 0
   && \textnormal{ on }  (0,\infty)\times \Gamma,\label{A10-2''}\\
  \NN^{\b}u  & = 0  && \textnormal{ on }   (0,\infty)\times \Gamma,\label{A10-3''}\\
  u |_{t=0} & = u_0   && \textnormal{ in } \Omega.\label{A10-4''}
  \end{alignat}
As the semigroup is analytic, the solution is $C^\infty$ in time so that
$(\uu(t))_{t>0} = (\Tt (t) (u_{1,0}, u_{2,0}))_{t>0}$ satisfies Equations~\eqref{A10-5''} and~\eqref{A10-6''} in a classical (in time) sense. Concerning the initial system~\eqref{A10-1''}--\eqref{A10-4''}, we immediately see that $u=u_1$ solves Equation~\eqref{A10-1''},~\eqref{A10-3''} and~\eqref{A10-4''}. 

The question remains in which way the Wentzell boundary condition~\eqref{A10-2''} is satisfied. But as $\uu \in C((0,\infty), D(\Aa)^2)$ due to the analyticity of the semigroup, naturally for all $t>0$ the functions $\uu(t,\cdot)$ are in $D(\Aa^2)$ and thus we have $$\tr B_0^*(\alpha B_N) u=\tr (\Aa \uu)_1= (\Aa \uu)_2=-\beta \NN^{\b} (\alpha B_N)u+\gamma \tr u,$$ which shows~\eqref{A10-2''}. In fact, the analyticity even yields $\uu(t,\cdot) \in D(\Aa^\infty)$ for $t>0$.
\end{bemerkung}
\begin{bemerkung}\label{r.explain}
We point out that the system~\eqref{A10-1''}--\eqref{A10-4''} has to be interpreted in such a way that $u_0$ is sufficiently smooth to have a trace, say $u_0\in H^{1/2+\rho}(\Omega)$; in this setting, the solutions of~\eqref{A10-1''}--\eqref{A10-4''} are in a one-to-one correspondence with the solutions of~\eqref{A10-5''}--\eqref{A10-9''} with $u_{1,0}= u_0|_\Omega$ and $u_{2,0}= u_0|_{\Gamma}$. In our semigroup approach, however, $u_{2,0}$ can be chosen independently of $u_{1,0}$ and, by the above, all of these solutions are (distinct!) solutions of~\eqref{A10-1''}--\eqref{A10-4''}. In a way, choosing $u_{2,0}$ different from $\tr u_{1,0}$ corresponds precisely to having some free energy on the boundary, which was a main motivation to consider Wentzell boundary conditions in the first place.
\end{bemerkung}

\section{Application to strongly elliptic operators in divergence form}\label{id}
In this section, we will specify the operator $B$ to be a strongly elliptic second-order operator in divergence form and return to the investigation of the system \eqref{10-1}--\eqref{10-4}. We consider $\delta=0$ at first and deal with the Robin case at the end of Section~\ref{id2}. 
We begin by settling the precise regularity assumptions on the matrix $Q$ and  recalling some facts concerning different realizations of co-normal traces.
\subsection{Co-normal traces} \label{Tr:weak}
\begin{hyp}\label{HypQ} Assume $Q \in C^{1,1}(\overline\Omega, \R^{\dm \times \dm})$ to be symmetric and uniformly positive definite, which means there is some open superset $\widetilde \Omega \subseteq \R^\dm$ containing $\overline\Omega$ such that $Q \in C^{1,1}(\widetilde \Omega, \R^{\dm \times \dm})$ is symmetric and satisfies for some $\kappa_Q>0$ 
\begin{equation} \label{posdefA}
    \<Q(x)\xi,\xi\>_{\C^\dm} \geq \kappa_Q |\xi|^2 ~~ (x \in \widetilde \Omega, \xi \in \C^\dm).
\end{equation}

\end{hyp}
\begin{bemerkung}
The regularity $Q \in C^{1,1}(\bar\Omega,\R^{\dm \times \dm})$ is not necessary for all the subsequent steps, part of the theory can be done using only $W^{1,\infty}$-regularity. However $C^{1,1}$ is the regularity from \cite[Chapter~11]{BGM22}, and thus used when we establish higher regularity and a precise identification of the occurring traces and the domain of our operator. For a finer distinction in regularity, we refer to \cite[Hypothesis~2.5 and Section~3.1]{Plo24}.
\end{bemerkung}

\index{Trace!strong}\index{Trace!Dirichlet} \index{Trace!Neumann}
 \begin{definition}\label{Def:Strong} Let $\Omega \subseteq \R^{\dm}$ be a Lipschitz domain with outward normal $\nu$. We consider the following notions of strong traces:
    \begin{enumerate}[\upshape(i)]
        \item For a real-valued matrix $Q \in W^{1,\infty}(\bar\Omega),\R^{d\times d}$, we denote the \textit{co-normal Neumann trace} of a function $u\in C^\infty(\overline\Omega)$ by 
        $\tau^Q_N u\coloneqq \nu \cdot \tr Q \nabla u$, where we read the operator $\tr$ component-wise.
        \item For any function $\delta \in L^\infty(\Omega)$, we will call $\tau^Q_\delta=\tau^Q_N+\delta \tr$ the \textit{(co-normal) Robin trace}. 
    \end{enumerate}
 \end{definition}
 
It is known, that the Dirichlet trace extends by continuity to a bounded linear surjective operator \[\tr\colon H^s(\Omega)\to H^{s-1/2}(\Gamma) \text{~for~all~} s\in \left(\dfrac12, \dfrac32\right)\] (cf.\ \cite[Equation~(2.7)]{GM08}). In fact, this operator is even a retraction, i.e.\ there exists a continuous right-inverse.
 Even for smooth domains, however, the continuity of $\tr\colon H^s(\Omega)\to H^{s-1/2}(\Gamma)$ \index{Trace!Dirichlet} does not hold for the endpoint case $s=\frac12$, see \cite[Theorem~1.9.5]{LM72}. However, one can include the cases $s=\frac12$ and $s=\frac32$ by replacing $H^s(\Omega)$ by $H^s_\Delta(\Omega)$. In particular, it was shown in \cite[Lemma~2.3]{GM08} that the smooth trace extends to a retraction $\tr \colon H^{3/2}_\Delta(\Omega)\to H^1(\Gamma)$. 

Next we  consider the weak definition of the (co-normal) Neumann trace.
\begin{align}\label{Def:WeakNeuA}
D(\partial_\nu^Q) \coloneqq  &\big\{ u \in H^1_{\Div Q\nabla}(\Omega) \, | \, \mbox{there exists a } g\in L^2(\Gamma) \mbox{ such that}\\
& \quad\la \Div Q \nabla u, v\ra_\Omega +\la Q \nabla u, \nabla v\ra_\Omega = \la g, \tr v\ra_\Gamma \text{ for all } v\in H^1(\Omega)\big\}, \notag
\end{align}
where we set $\partial_\nu^Q u=g$. Naturally, one wants to know whether $\partial^Q_\nu$ coincides with an extension of  $\tau_N^Q$. For $Q=\id$ one has  $\partial_\nu=\tau_N\colon H^{3/2}_\Delta(\Omega)\to L^2(\Gamma)$, see \cite[Lemma~2.4]{GM08}. For $Q \neq \id$ the properties of such a possible extension were much less clear for some time. In the recent preprint \cite{BGM22}, those issues were resolved. We recall their central result for our case (\cite[Corollary~11.28]{BGM22}) adapted to the notation we are going to use. 

\begin{lemma}\label{cono}
    Let $\Omega \subseteq \R^\dm$ be a Lipschitz domain. Let $\BB$ be a formal second-order differential operator acting on elements in $L^2(\Omega)$ in a distributional sense via 
    \[\BB u = \sum_{i,j=1}^\dm \partial_i q_{ij}(x) \partial_j u,\] where the matrix $Q=(q_{ij})$ is given as in Hypothesis~\ref{HypQ}. Let $B$ denote its $L^2(\Omega)$-realization (cf.\ Definition~\ref{DefBDist}). Then, the co-normal Neumann trace defined by $u \mapsto \nu \cdot \tr(Q \nabla u)$ for smooth functions extends uniquely to  
    \begin{equation} \label{DefGamms}
    \gamma^s_Q: H^s_B(\Omega) \rightarrow H^{s-3/2}(\Gamma)
    \end{equation} for all $s \in [\frac12, \frac32 ]$, forming a compatible family in $s$. 
       Furthermore, for all $s \in [\frac12, \frac32 ]$, we have the following:
    \begin{enumerate}[\upshape(i)]
        \item The generalized Neumann traces in~\eqref{DefGamms} are surjective. In fact, there are bounded linear operators
        \begin{equation} \label{DefGammsInv}
    \Upsilon^s_Q:  H^{s-3/2}(\Gamma) \rightarrow H^s_B(\Omega),
    \end{equation} which are also compatible with each other for different $s$, and right inverses to the Neumann trace, meaning for all $\psi \in H^{s-3/2}(\Gamma)$ we have $\gamma_Q^{s}(\Upsilon_Q^{s}\psi)=\psi$. 
    \item For any $f \in H^s_B(\Omega)$ and $h \in H^{2-s}_B(\Omega)$ the following Green's formula holds:
    \begin{align*}
        \<\tr h,\gamma_Q^s f\>_{H^{3/2-s}(\Gamma)\times (H^{3/2-s}(\Gamma))'}&-\<\gamma_Q^s h,\tr f\>_{H^{s-1/2}(\Gamma)\times (H^{1/2-s}(\Gamma))'}\\
        &\hphantom{space}=\<h,Bf\>_\Omega-\<Bh,f\>_\Omega.
    \end{align*} 
    \item $\ker(\gamma_Q^s) \subseteq H^{3/2}(\Omega)$, $\ker(\tr) \subseteq H^{3/2}(\Omega)$, and for any $u \in H^{1/2}_B(\Omega)$ with either $\gamma^s_Q u=0$ or $\tr u=0$, there is some $C>0$ such that \[\|u\|^2_{H^{3/2}(\Omega)}\leq C \|u\|_\Omega^2+\|Bu\|_\Omega^2.\]
    \end{enumerate}
    \begin{proof}
        This is \cite[Corollary~11.25 and~11.28]{BGM22}.
    \end{proof}
  \end{lemma}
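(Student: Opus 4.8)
The plan is to deduce everything from the sharp Lipschitz-domain co-normal trace theory of \cite{BGM22}, quoted here as Corollaries~11.25 and~11.28 there; the architecture one would reproduce if proving it from scratch is as follows. First I would fix the two endpoints $s=\tfrac32$ and $s=\tfrac12$ and only afterwards interpolate. For $s=\tfrac32$: if $u\in H^{3/2}_B(\Omega)$ then $F\coloneqq Q\nabla u\in H^{1/2}(\Omega,\C^\dm)$ with $\Div F\in L^2(\Omega)$, and on a Lipschitz domain the space $\{F\in H^{1/2}(\Omega)^\dm:\Div F\in L^2(\Omega)\}$ carries a bounded normal-trace map into $H^0(\Gamma)=L^2(\Gamma)$; this produces $\gamma^{3/2}_Q$, and that it extends $u\mapsto\nu\cdot\tr(Q\nabla u)$ follows from the classical Gauss--Green identity on $H^2(\Omega)$ together with density of $H^2(\Omega)$ in $H^{3/2}_B(\Omega)$ in the graph norm. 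For $s=\tfrac12$ one only has $\Div F\in L^2$ with $F$ of $H^{-1/2}$-type, so the normal trace lands in $H^{-1}(\Gamma)=(H^1(\Gamma))'$; here $\gamma^{1/2}_Q u$ is \emph{defined} by the dual pairing $\langle\gamma^{1/2}_Q u,\tr v\rangle=\langle Bu,v\rangle_\Omega\mp\langle Q\nabla u,\nabla v\rangle_\Omega$ tested against $v\in H^{3/2}(\Omega)$, and one uses that $\tr\colon H^{3/2}(\Omega)\to H^1(\Gamma)$ is a retraction to see the pairing is well defined and bounded. Interpolating between the endpoints gives $\gamma^s_Q\colon H^s_B(\Omega)\to H^{s-3/2}(\Gamma)$ for $s\in[\tfrac12,\tfrac32]$; compatibility across $s$ is automatic because all $\gamma^s_Q$ agree with $u\mapsto\nu\cdot\tr(Q\nabla u)$ on the common dense set $C^\infty(\overline\Omega)$.

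Green's formula~(ii) I would prove first for $f,h\in H^2(\Omega)$, where it is just the classical second Green identity for $\Div(Q\nabla\cdot)$, and then extend it for each fixed $s$ by approximating $f$ in the graph norm of $H^s_B(\Omega)$ and $h$ in that of $H^{2-s}_B(\Omega)$ by smooth functions; the dual pairings $H^{3/2-s}(\Gamma)\times(H^{3/2-s}(\Gamma))'$ and $H^{s-1/2}(\Gamma)\times(H^{1/2-s}(\Gamma))'$ in the statement are exactly the ones through which both sides pass to the limit. For~(i), surjectivity and the right inverses: given $\psi\in H^{s-3/2}(\Gamma)$ I would take $\Upsilon^s_Q\psi$ to be the solution of the coercive Neumann problem $(B+c)u=0$ in $\Omega$, $\gamma_Q u=\psi$, with $c$ large, so that the form $\langle Q\nabla\cdot,\nabla\cdot\rangle_\Omega+c\langle\cdot,\cdot\rangle_\Omega$ is $H^1(\Omega)$-coercive by~\eqref{posdefA} and Lax--Milgram gives a bounded solution in $H^1(\Omega)$; the upgrade $u\in H^s_B(\Omega)$ and the mutual compatibility of the $\Upsilon^s_Q$ then reduce to elliptic regularity and to uniqueness of the Neumann solution. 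For the kernel regularity~(iii), an element of $\ker\gamma^s_Q$ (resp.\ $\ker\tr$) with $Bu\in L^2(\Omega)$ solves a homogeneous Neumann (resp.\ Dirichlet) problem, and the regularity theory for such problems on Lipschitz domains yields $u\in H^{3/2}(\Omega)$ together with $\|u\|_{H^{3/2}(\Omega)}^2\lesssim\|u\|_\Omega^2+\|Bu\|_\Omega^2$.

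The analytic core, and the step I expect to be the main obstacle, is the last two points: one needs \emph{sharp} $H^{3/2}$-elliptic regularity on Lipschitz domains --- the exponent $\tfrac32$ is optimal and is exactly where the Lipschitz, as opposed to $C^{1,1}$, geometry bites --- and one needs it for the \emph{variable}-coefficient operator $\Div(Q\nabla\cdot)$ rather than just the Laplacian. This is where $Q\in C^{1,1}(\overline\Omega)$ enters: one localizes, freezes coefficients, treats the lower-order remainder as a perturbation of the constant-coefficient problem (for which Jerison--Kenig-type estimates are available), and patches. Since all of this is carried out in \cite{BGM22}, the present paper simply invokes their Corollaries~11.25 and~11.28 and moves on.
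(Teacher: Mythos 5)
The paper's entire proof of this lemma is a single citation to \cite[Corollaries~11.25 and~11.28]{BGM22}, which is exactly where your proposal also lands; your reconstruction of the underlying architecture (endpoint traces at $s=\tfrac12,\tfrac32$, interpolation, Green's identity by smooth approximation, Neumann right inverses, and sharp $H^{3/2}$ regularity on Lipschitz domains as the analytic core) is a faithful sketch of what that reference supplies. No gap to report.
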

  We are going to verify the compatibility to the weak formulation $\partial_\nu^Q$ in Theorem~\ref{SatzA1}, later.

\subsection{On the second-order operator}\label{SecA1}
With the trace results from the last section, we are going to be able to identify the operator $\Aa$ for the case $B=-\Div Q \nabla$, and to fully describe its domain in precise terms of Sobolev regularity. The underlying subsidiary form is given as follows.

\begin{definition}\label{DefSup} Assume Hypothesis~\ref{HypQ}. Set $D(\b)\coloneqq H^1(\Omega)$, and let $\b: D(\b) \times D(\b) \rightarrow \C$ be given by 
\begin{equation}\label{Def:Bq}
  \b(u,v)=\<Q\nabla u, \nabla v\>_\Omega  
\end{equation}
for $u,v \in D(\b)$. Also, set $\b_D(u,v)\coloneqq\b(u,v)$ for $u,v \in D(\b_D)=H_0^1(\Omega)$.
\end{definition}
\begin{lemma}\label{Lem:Sett}
   The subsidiary form $\b$ defined as above is admissible in the sense of Definition~\ref{DefSubForm}. Furthermore, we have  $C_c^\infty(\Omega) \subseteq D(B_N)\cap D(B_D)$. Hence, we are in the situation of Definition~\ref{DefN-NN} and Theorem~\ref{GenOP1}.
\end{lemma}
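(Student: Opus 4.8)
The plan is to verify the three conditions in Definition~\ref{DefSubForm} for $\b(u,v)=\<Q\nabla u,\nabla v\>_\Omega$ on $D(\b)=H^1(\Omega)$, namely that $\b$ is (a) generating, (b) symmetric, and (c) satisfies the sandwich condition \eqref{TestDense} for some $\rho\in(0,1)$. The symmetry is immediate since $Q$ is real-valued and symmetric by Hypothesis~\ref{HypQ}. For the sandwich condition, one takes $\rho=\tfrac12$ (so $H^{1/2+\rho}(\Omega)=H^1(\Omega)=D(\b)$), and then the embedding $(D(\b),\|\cdot\|_\b)\hookrightarrow H^1(\Omega)$ is trivially continuous, dense, and in fact an equality of sets; one only has to note that the graph norm $\|\cdot\|_\b$ is equivalent to the usual $H^1$-norm, which follows from the uniform ellipticity \eqref{posdefA} (lower bound $\Re\b(u,u)\ge\kappa_Q\|\nabla u\|_\Omega^2$) and the boundedness of $Q$ (upper bound). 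The inclusion $C_c^\infty(\Omega)\subseteq H^1(\Omega)$ is obvious.

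Next I would check that $\b$ is generating in the sense recalled before Definition~\ref{DefSubForm}: densely defined, semi-bounded, closed, and continuous. Density of $D(\b)=H^1(\Omega)$ in $L^2(\Omega)$ is classical; semi-boundedness by $\lambda_\b$ follows from \eqref{posdefA} together with $\|u\|_\Omega^2$ absorbing the zeroth-order term (in fact $\Re\b(u,u)\ge 0$, so one can take $\lambda_\b=0$, but a nonpositive $\lambda_\b$ suffices); continuity in the sense of \cite[Chapter~1]{Ouh09} follows from $|\b(u,v)|\le\|Q\|_\infty\|\nabla u\|_\Omega\|\nabla v\|_\Omega\le\|Q\|_\infty\|u\|_\b\|v\|_\b$; and closedness follows because $\|\cdot\|_\b$ is equivalent to the $H^1$-norm and $H^1(\Omega)$ is complete. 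The same reasoning applies verbatim to $\b_D$ on $H^1_0(\Omega)$, but by Proposition~\ref{b0gen} one does not even need to argue this separately. This establishes admissibility.

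For the second claim, $C_c^\infty(\Omega)\subseteq D(B_N)\cap D(B_D)$: given $u\in C_c^\infty(\Omega)$, integration by parts gives $\b(u,v)=\<Q\nabla u,\nabla v\>_\Omega=\<-\Div Q\nabla u,v\>_\Omega$ for all $v\in H^1(\Omega)$ (no boundary terms since $u$ has compact support), and $-\Div Q\nabla u\in L^2(\Omega)$ because $Q\in C^{1,1}(\overline\Omega)\subseteq W^{1,\infty}(\overline\Omega)$ and $u$ is smooth. Hence $u\in D(B_N)$ with $B_N u=-\Div Q\nabla u$; the same identity restricted to $v\in H^1_0(\Omega)$ shows $u\in D(B_D)$. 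Consequently $D(B_D)\cap D(B_N)\supseteq C_c^\infty(\Omega)$ is dense in $L^2(\Omega)$, so the standing density hypothesis of Definition~\ref{DefN-NN} holds and we are in the setting of Theorem~\ref{GenOP1}.

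The only mildly delicate point — and the one I would present most carefully — is the norm equivalence $\|\cdot\|_\b\simeq\|\cdot\|_{H^1(\Omega)}$, since everything (closedness, continuity of the trace on $D(\b)$, the embedding in \eqref{TestDense}) funnels through it; but this is a direct consequence of \eqref{posdefA} and $Q\in L^\infty$, with the constant $1-\lambda_\b$ in the definition of $\|\cdot\|_\b$ ensuring control of the full $H^1$-norm rather than just the seminorm. There is no genuine obstacle here; the lemma is essentially a bookkeeping check that the concrete divergence-form setting fits the abstract framework of Section~\ref{SecGenForm}.
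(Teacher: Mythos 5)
Your proposal is correct and follows essentially the same route as the paper: verify the norm equivalence $\|\cdot\|_\b\simeq\|\cdot\|_{H^1(\Omega)}$ from \eqref{posdefA} and the boundedness of $Q$ to get that $\b$ is generating with $\rho=\tfrac12$, then show the test-function inclusion via integration by parts with vanishing boundary term. The only cosmetic difference is that the paper invokes the Green's formula of \cite[Corollary~4.5]{BGM22} and reduces membership in $D(B_D)$ to membership in $D(B_N)$ via Proposition~\ref{b0gen}, whereas you integrate by parts directly for compactly supported smooth functions and check the $D(B_D)$ condition against $v\in H^1_0(\Omega)$; both are sound.
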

\begin{proof}
We choose $\rho=1/2$ in Equation \eqref{TestDense} and have $C_c^\infty(\Omega) \subseteq D(\b)=H^1(\Omega)$, whence the form $\b$ is also densely defined. It is accretive, as $Q$ is uniformly positive definite. As $Q$ is also bounded, we have $$\|u\|^2_{H^1}\leq \b(u,u)+\|u\|_\Omega ^2 \leq C \|u\|_{H^1}.$$ So $\b$ is closed and continuous, and therefore $\b$ is generating. It is symmetric, as $Q$ is symmetric and real-valued.
Hence the Neumann realization $B_N$, as the associated operator to $\b$, and the Dirichlet realization, as the associated operator to $\b_D=\b|_{H_0^1(\Omega)}$, are well defined and we have $D(B_N)\cap H_0^1(\Omega)=D(B_D)\cap D(B_N)$ (cf.\ Proposition~\ref{b0gen}). 
 As $C_c^\infty(\Omega) \subseteq H_0^1(\Omega)$ we only need to show that $C_c^\infty(\Omega)  \subseteq D(B_N).$ So let $\phi \in C_c^\infty(\Omega)$, which implies that $Q \nabla \phi \in (H^1(\Omega)^\dm)$ as $C^{1,1}(\overline{\Omega},\R^{\dm \times \dm}) \subset W^{1,\infty}(\Omega,\R^{\dm \times\dm})$. Now, we may use the following version of Green's formula taken from \cite[Corollary~4.5]{BGM22}, which holds for their case of $\eps = 1/2$, as $\Delta$ maps from $H^1(\Omega)$ to $H^{-1}(\Omega)$.
    For all $v \in D(\b)=H^1(\Omega)$ we have that  
     $$\<Q \nabla \phi, \nabla v\>_\Omega+\<\Div Q \nabla \phi,v \>_\Omega=\< \nu \cdot \tr Q \nabla \phi, \tr v\>_\Gamma=0$$
     as $\nabla \phi = 0$ close to the boundary, which shows $$\b(\phi,v)=\<-\Div Q \nabla \phi,v\>_\Omega$$ for all $v \in D(\b)$ and thus $\phi \in D(B_N)$ and $B_N \phi=-\Div Q \nabla \phi.$ 
\end{proof}

So we may define $B_0$, $B_0^*$, $N^\b$, and $\NN^\b$ as stated in Theorem~\ref{GenOP1}. Furthermore, the next Lemma shows that we are in the situation where $N^\b=\NN^\b$ holds.
\pagebreak[3]
\begin{lemma}\label{Lem:Nsur}
    The operator $N^\b$ is surjective.
\end{lemma}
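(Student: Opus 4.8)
The goal is to show that $N^\b$, the weak Neumann trace attached to the subsidiary form $\b(u,v)=\langle Q\nabla u,\nabla v\rangle_\Omega$ on $H^1(\Omega)$, is surjective onto $L^2(\Gamma)$. The natural strategy is to identify $N^\b$ with (a restriction of) the classical weak co-normal derivative $\partial_\nu^Q$ from~\eqref{Def:WeakNeuA}, and then to invoke the surjectivity of the generalized co-normal Neumann traces $\gamma^s_Q$ supplied by Lemma~\ref{cono}(i) via the bounded right inverses $\Upsilon^s_Q$. The point is that $\Upsilon^{3/2}_Q$ maps $H^0(\Gamma)=L^2(\Gamma)$ into $H^{3/2}_B(\Omega)$, and for such a ``nice'' preimage the distributional $Bu=-\Div Q\nabla u$ lies in $L^2(\Omega)$ and all the integration-by-parts identities are honest.

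Concretely, I would proceed as follows. First, fix $g\in L^2(\Gamma)$ and set $u\coloneqq \Upsilon^{3/2}_Q g \in H^{3/2}_B(\Omega)\subseteq H^{3/2}(\Omega)$, so that $\gamma^{3/2}_Q u = g$ and $Bu = -\Div Q\nabla u \in L^2(\Omega)$. Second, I must check that $u\in D(N^\b)$: this requires (a) $u\in D(\b)=H^1(\Omega)$, which is clear since $H^{3/2}(\Omega)\hookrightarrow H^1(\Omega)$; (b) $u\in D(B_0^*)$ with $B_0^* u = Bu = -\Div Q\nabla u$; and (c) the Green identity $\langle B_0^* u,v\rangle_\Omega - \b(u,v) = \langle g,\tr v\rangle_\Gamma$ for all $v\in D(\b)=H^1(\Omega)$. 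The last identity is precisely the classical Green's formula for $Q\in W^{1,\infty}$ and $Q\nabla u\in H^1(\Omega,\R^\dm)$ (here using $u\in H^{3/2}$ and $Q\in C^{1,1}$, so $Q\nabla u\in H^{1/2}$ — I should be a little careful and rather use the version in Lemma~\ref{cono}(ii) with $s=3/2$, pairing $\tr h$ with $\gamma^{3/2}_Q u$ against $h=v$, noting that for $h\in H^{1/2}_B(\Omega)$ the pairing $\langle\gamma^{1/2}_Q h,\tr u\rangle$ and the distributional term combine correctly; for $v\in H^1(\Omega)\subseteq H^{1/2}_B$ with $Bv$ possibly only in $H^{-1}$ one may first prove the identity for $v\in C^\infty(\overline\Omega)$ and pass to the limit using density and the continuity of $\gamma^{3/2}_Q$ on $H^{3/2}_B(\Omega)$). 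For (b), I would argue that $B_0\subseteq B_N$ and that $B_0^*$ extends $B_N^*=B_N$; since the distributional operator $-\Div Q\nabla$ acts on $u$ as an $L^2$ function and tests correctly against $D(B_0)\supseteq C_c^\infty(\Omega)$ (Lemma~\ref{Lem:Sett}), we get $u\in D(B_0^*)$ and $B_0^* u = -\Div Q\nabla u$. Third, having shown $u\in D(N^\b)$, the defining relation gives $N^\b u = g$, which establishes surjectivity since $g\in L^2(\Gamma)$ was arbitrary.

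The main obstacle I anticipate is the bookkeeping in step two, part (c): matching the abstract pairing in the definition of $N^\b$ — which tests against all of $D(\b)=H^1(\Omega)$ and uses $B_0^*$ — with the concrete Green's formula of Lemma~\ref{cono}, whose Green identity is stated with duality pairings in fractional Sobolev spaces and a built-in asymmetry between the roles of $f$ and $h$. The cleanest route is probably to observe that $u\in H^{3/2}(\Omega)$ with $Q\nabla u\in H^{1/2}(\Omega,\R^\dm)$ already admits the \emph{classical} divergence-form Green's formula $\langle -\Div Q\nabla u,v\rangle_\Omega + \langle Q\nabla u,\nabla v\rangle_\Omega = \langle \nu\cdot\tr(Q\nabla u),\tr v\rangle_\Gamma$ for all $v\in H^1(\Omega)$ — this is standard once $Q\nabla u$ has an $H^{1/2}$-trace and its divergence is in $L^2$ — together with the compatibility $\nu\cdot\tr(Q\nabla u) = \gamma^{3/2}_Q u = g$ from Lemma~\ref{cono}. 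Everything else (identifying $B_0^* u$, the membership $u\in H^1(\Omega)$) is routine. I should also remark, for use in the subsequent identification results, that this argument simultaneously shows $D(\partial_\nu^Q)\supseteq \Upsilon^{3/2}_Q(L^2(\Gamma))$ and that $N^\b$ and $\partial_\nu^Q$ agree on such elements, though strictly speaking only surjectivity is needed here.
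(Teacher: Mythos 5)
Your argument reaches the goal by a genuinely different route from the paper. The paper does not invoke the trace theory of Lemma~\ref{cono} here at all: it quotes Nittka's solvability result for the weak inhomogeneous Neumann problem, which directly supplies $u\in H^1(\Omega)$ and $\lambda\in\R$ with $\b(u,v)+\lambda\langle u,v\rangle_\Omega=\langle g,\tr v\rangle_\Gamma$ for all $v\in H^1(\Omega)$; testing against $D(B_0)$ then gives $u\in D(B_0^*)$ with $B_0^*u=-\lambda u$ for free, and the displayed identity is verbatim the defining relation for $u\in D(N^\b)$ with $N^\b u=-g$. This is shorter because the source term $-\lambda u$ is trivially in $L^2(\Omega)$ and no Green formula for rough $u$ has to be checked; it also transfers unchanged to the Robin form $\b_\delta$ (Theorem~\ref{Thm:Robin}(i)). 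Your route --- lift $g$ by $\Upsilon^{3/2}_Q$ and verify membership in $D(N^\b)$ --- is viable and essentially reappears later in the paper, inside the proof of Theorem~\ref{SatzA1}(iii) (the inclusion $\gamma^{3/2}_Q\subseteq -N^\b$) and in Proposition~\ref{Kor:SmoothR}(i); it buys the identification $N^\b=-\gamma_Q^{3/2}$ on the lifted elements at the price of the heavier machinery.

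One justification in your step (c) is wrong as written: a field in $H^{1/2}(\Omega,\R^\dm)$ does \emph{not} in general have an $L^2(\Gamma)$ trace --- $s=\frac12$ is exactly the endpoint where the Dirichlet trace fails (the paper recalls this from Lions--Magenes), and the existence of $\nu\cdot\tr(Q\nabla u)\in L^2(\Gamma)$ for $u\in H^{3/2}_B(\Omega)$ is precisely the nontrivial content of Lemma~\ref{cono}, which needs the extra hypothesis $Bu\in L^2(\Omega)$. So the ``classical'' shortcut must be discarded in favor of your fallback: establish the identity for smooth $u$ and pass to the limit using the density of $C^\infty(\overline\Omega)$ in $H^{3/2}_B(\Omega)$ together with the continuity of $\gamma_Q^{3/2}$ (this density is Theorem~\ref{SatzA1}(i), whose proof does not use the present lemma, so there is no circularity). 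With that repair the argument closes: once the Green identity holds for all $v\in H^1(\Omega)$, restricting to $v\in D(B_0)\subseteq H^1_0(\Omega)$ yields $u\in D(B_0^*)$ with $B_0^*u=B_{\max}u$ (testing only against $C_c^\infty(\Omega)$, as you suggest, is not sufficient, since $D(B_0)$ is a priori larger). Finally, the correct sign is $N^\b u=-g$ rather than $g$, which is of course immaterial for surjectivity.
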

\begin{proof}
    The surjectivity of $N^\b$ is a special case of \cite[Lemma~3.8]{Nit11}. There it is shown that for any $g \in L^2(\Gamma)$ there is a $u \in H^1(\Omega)$ and a $\lambda \in \R$ such that for all $v \in C^1(\overline\Omega)$ we have
        $\b(u,v)+\<\lambda u,v\>_\Omega=\<g,\tr v\>_\Gamma$. Approximation in $H^1(\Omega)$ yields this result for all $v \in H^1(\Omega).$
        Hence $u \in D(B_0^*)$ as for $v \in D(B_0)$ we have $-\<\lambda u,v\>_\Omega=\b(u,v)=\<u,B_N v\>_\Omega=\<u,B_0 v\>_\Omega$ and $B_0^*u=-\lambda u,$ so $u \in D(N^\b)$ and $-N^\b u=g$. 
\end{proof}

Next we show that the operator $B_0^*$ is in some sense the maximal $L^2$-realization of $-\Div Q \nabla$, and $-N^{\b}$ coincides with the usual co-normal derivative $\partial_\nu$, which extends $\nu \cdot \tr Q \nabla.$ So we begin by verifying that both operators act as desired on smooth functions.
\begin{proposition}\label{IncMin1}
    If $\phi \in C^\infty(\overline\Omega)$, we have $B_0^* \phi=-\Div Q \nabla \phi$ and $N^{\b}\phi= -\nu \cdot \tr Q \nabla \phi$. 
\end{proposition}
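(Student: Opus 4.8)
The plan is to verify the two claimed identities by testing against sufficiently many functions and invoking the definitions of $B_0^*$ and $N^\b$ from Definition~\ref{DefN-NN}, together with the smooth Green's formula already used in the proof of Lemma~\ref{Lem:Sett}. Fix $\phi \in C^\infty(\overline\Omega)$. Since $Q \in C^{1,1}(\overline\Omega,\R^{\dm \times \dm}) \subset W^{1,\infty}$, we have $Q\nabla\phi \in H^1(\Omega)^\dm$, so $\Div Q\nabla\phi \in L^2(\Omega)$ and the classical Green formula from \cite[Corollary~4.5]{BGM22} applies: for all $v \in H^1(\Omega) = D(\b)$,
\[
\<Q\nabla\phi,\nabla v\>_\Omega + \<\Div Q\nabla\phi, v\>_\Omega = \<\nu\cdot\tr Q\nabla\phi,\tr v\>_\Gamma .
\]
Equivalently, $\b(\phi,v) = \<-\Div Q\nabla\phi, v\>_\Omega + \<\nu\cdot\tr Q\nabla\phi,\tr v\>_\Gamma$ for all $v \in D(\b)$.

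First I would show $\phi \in D(B_0^*)$ with $B_0^*\phi = -\Div Q\nabla\phi$. Testing the above identity only against $v \in D(B_0) = D(B_N)\cap D(B_D)$, the boundary term drops out because $\tr v = 0$ for such $v$ (they lie in $D(B_D) \subseteq H_0^1(\Omega)$), so $\b(\phi,v) = \<-\Div Q\nabla\phi, v\>_\Omega$; since $B_0 v = B_N v$ and $\b(\phi,v) = \<\phi, B_N v\>_\Omega$ would require $\phi \in D(B_N)$ — which we do not want to assume — instead use the symmetry $\b(\phi,v)=\overline{\b(v,\phi)} = \overline{\<B_0 v,\phi\>_\Omega} = \<\phi, B_0 v\>_\Omega$. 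Hence $v \mapsto \<\phi, B_0 v\>_\Omega = \<-\Div Q\nabla\phi, v\>_\Omega$, which is precisely the statement that $\phi \in D(B_0^*)$ and $B_0^*\phi = -\Div Q\nabla\phi$ by definition of the adjoint.

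Next, knowing $\phi \in D(B_0^*)$ and $\phi \in D(\b) = H^1(\Omega)$, I return to the full Green formula above: for all $v \in D(\b)$,
\[
\<B_0^*\phi, v\>_\Omega - \b(\phi,v) = \<-\Div Q\nabla\phi,v\>_\Omega - \b(\phi,v) = -\<\nu\cdot\tr Q\nabla\phi,\tr v\>_\Gamma ,
\]
so the function $g \coloneqq -\nu\cdot\tr Q\nabla\phi \in L^2(\Gamma)$ (it is the restriction to $\Gamma$ of a continuous function, since $Q\nabla\phi \in C(\overline\Omega)^\dm$) witnesses that $\phi \in D(N^\b)$ and $N^\b\phi = -\nu\cdot\tr Q\nabla\phi$, as claimed.

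I do not anticipate a serious obstacle here; the only point requiring a little care is avoiding the unwarranted assumption $\phi \in D(B_N)$ when identifying $B_0^*\phi$ — one must genuinely use the adjoint definition via $D(B_0)$ and the symmetry of $\b$, rather than the associated-operator relation for $B_N$ — and noting that $\nu \cdot \tr Q\nabla\phi$ does define an $L^2(\Gamma)$-function so that it is a legitimate value of $N^\b$. Both are immediate given the regularity of $Q$ and $\phi$. (Note that by Lemma~\ref{Lem:Nsur} and Theorem~\ref{GenOP1}~(iv) we also have $N^\b = \NN^\b$, so the same identity holds with $\NN^\b$ in place of $N^\b$, though this is not needed for the present statement.)
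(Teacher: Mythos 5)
Your proposal is correct and follows essentially the same route as the paper: apply the Green formula of \cite[Corollary~4.5]{BGM22} to $\phi$ with $Q\nabla\phi\in H^1(\Omega)^\dm$, test against $v\in D(B_0)$ (where the boundary term vanishes) and use the symmetry of $\b$ to identify $B_0^*\phi$, then read off $N^\b\phi$ from the full identity on $D(\b)$. Your explicit remarks on avoiding the relation $\b(\phi,v)=\<\phi,B_Nv\>_\Omega$ (which would need $\phi\in D(B_N)$) and on $\nu\cdot\tr Q\nabla\phi\in L^2(\Gamma)$ only make precise what the paper leaves implicit.
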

\begin{proof}
    For any function $\phi \in C^\infty(\overline\Omega)$, so as seen above $Q \nabla \phi$ is an element of $(H^1(\Omega))^\dm.$ So again by \cite[Corollary~4.5]{BGM22}, for all $v \in H^1(\Omega)$, we have 
    $$\<Q \nabla \phi, \nabla v\>_\Omega+\<\Div Q \nabla \phi,v \>_\Omega=\< \nu \cdot \tr Q \nabla \phi, \tr v\>_\Gamma.$$ 
    This means, for all functions $v \in D(B_0)\subseteq D(B_N)$, that  
    $$\<-\Div Q \phi, v\>_\Omega=\<Q \nabla \phi, \nabla v\>_\Omega=\b(\phi,v)=\<\phi,B_0 v\>_\Omega,$$
     which shows $\phi \in D(B_0^*)$ and $B_0^*\phi=-\Div Q \nabla \phi.$ Furthermore, we even have $\phi \in D(N^{\b})$ and $N^{\b}\phi=-\nu \cdot \tr Q \nabla \phi.$
      \end{proof}
 We point out that $\Div Q \nabla$ does not map test functions onto test functions. Hence, there is no distributional realization of that operator and the largest space we can work on is $H^{-2}(\Omega)$, the dual of $H_0^2(\Omega)$, whence we use the $L^2(\Omega)$-realization of that version. To that end, we establish an elliptic regularity result on $\R^{dm}$ on the level of test functions. 
 \begin{lemma}\label{Lem:EllReg} Let $Q$ satisfy Hypothesis~\ref{HypQ}. Then there is a symmetric, uniformly positive definite extension $\widehat Q \in \BUC^1(\R^\dm, \R^{\dm\times \dm})$ of $Q$. Furthermore, for all $s \in [0,2]$, there is a $\lambda_0>1$ such that for any $\lambda \geq \lambda_0$ there exists $C_\lambda>0$ for which
\[\|\phi\|_{H^{s}(\R^\dm)} \leq C_{\lambda} \|(\lambda-\Div \widehat Q \nabla) \phi\|_{H^{s-2}(\R^\dm)}\] holds for all $\phi \in C_c^\infty(\Omega)$. 
\end{lemma}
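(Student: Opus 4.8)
The statement asks for two things: first, the existence of a symmetric, uniformly positive definite extension $\widehat Q \in \BUC^1(\R^\dm,\R^{\dm\times\dm})$ of $Q$; second, an elliptic regularity estimate on $\R^\dm$ for the shifted operator $\lambda - \Div\widehat Q\nabla$ acting on test functions supported in $\Omega$, valid on the scale $s\in[0,2]$. I would treat these as two essentially independent tasks and combine them at the end.

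\textbf{Step 1: the extension.} Since $Q\in C^{1,1}(\widetilde\Omega,\R^{\dm\times\dm})$ by Hypothesis~\ref{HypQ}, each entry $q_{ij}$ is $C^1$ with Lipschitz gradient on a neighborhood of $\overline\Omega$; in particular $q_{ij}\in C^1(\overline\Omega)$ with bounded derivatives. Apply a Whitney/Stein-type extension operator (or, more elementarily, multiply by a cutoff $\chi\in C_c^\infty(\widetilde\Omega)$ that is $\equiv 1$ near $\overline\Omega$ and extend by $0$, after first mollifying slightly if one wants genuine $C^1$ rather than merely $W^{1,\infty}$) to obtain $\widehat q_{ij}\in \BUC^1(\R^\dm)$ agreeing with $q_{ij}$ on $\overline\Omega$. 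Symmetrize by replacing $\widehat Q$ with $\tfrac12(\widehat Q+\widehat Q^{\!\top})$, which preserves $\BUC^1$ and the values on $\overline\Omega$. Uniform positive definiteness is the only non-automatic point: the extension is positive definite on $\overline\Omega$ and, after shrinking the support of the cutoff and then adding $\epsilon\,\mathrm{Id}$ with $\epsilon$ small, one arranges $\langle\widehat Q(x)\xi,\xi\rangle\ge \tfrac{\kappa_Q}{2}|\xi|^2$ everywhere; adding a constant multiple of the identity stays in $\BUC^1$ and changes nothing on $\overline\Omega$ if one instead extends $Q-\epsilon\mathrm{Id}$ and adds $\epsilon\mathrm{Id}$ back. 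I would write this out compactly rather than belabor it.

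\textbf{Step 2: the a priori estimate.} With $\widehat Q\in\BUC^1(\R^\dm)$ symmetric and uniformly elliptic on all of $\R^\dm$, the operator $\Div\widehat Q\nabla$ is a second-order operator with bounded, uniformly continuous, Lipschitz-free-but-$C^1$ coefficients. For $s=1$ the estimate is the coercivity/G\aa rding inequality: for $\phi\in C_c^\infty(\R^\dm)$,
\[
\langle(\lambda-\Div\widehat Q\nabla)\phi,\phi\rangle_{\R^\dm} = \lambda\|\phi\|_{L^2}^2 + \langle\widehat Q\nabla\phi,\nabla\phi\rangle_{\R^\dm} \ge \min(1,\kappa_Q)\|\phi\|_{H^1(\R^\dm)}^2
\]
for $\lambda\ge 1$, and by duality (the operator is self-adjoint since $\widehat Q$ is symmetric) $\|(\lambda-\Div\widehat Q\nabla)\phi\|_{H^{-1}}\ge c\|\phi\|_{H^1}$, which is the $s=1$ case. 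For $s=2$ one uses the standard $W^{2,2}$ elliptic estimate for operators with $C^1$ (in fact $W^{1,\infty}$ suffices via Nirenberg's difference-quotient method, or one may cite the $BUC^1$-coefficient full-space result directly): $\|\phi\|_{H^2(\R^\dm)}\le C(\|\Div\widehat Q\nabla\phi\|_{L^2}+\|\phi\|_{L^2})$, and absorbing the lower-order term using the $s=1$ bound with $\lambda$ large gives $\|\phi\|_{H^2}\le C_\lambda\|(\lambda-\Div\widehat Q\nabla)\phi\|_{L^2}$. For $s=0$ it is again duality from the $s=2$ case (self-adjointness). The intermediate $s\in(0,2)$ follow by complex interpolation between the endpoint estimates $s=0$ and $s=2$, using that $\lambda-\Div\widehat Q\nabla\colon H^s(\R^\dm)\to H^{s-2}(\R^\dm)$ is a bounded family and the estimate is a lower bound on a fixed operator; one interpolates the inverse, which is bounded $H^{s-2}\to H^s$ for $s=0$ and $s=2$ by the two endpoint estimates, hence for all $s\in[0,2]$. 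Taking $\lambda_0$ to be the maximum of the $\lambda$-thresholds needed at the endpoints (and $>1$) finishes it; note the restriction to $\phi\in C_c^\infty(\Omega)$ is not used in the estimate itself — it is stated this way only because that is how the lemma will be applied, the support condition guaranteeing one stays on the level of test functions without boundary contributions.

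\textbf{Main obstacle.} The genuinely delicate point is obtaining the full $H^2$ regularity with only $C^1$ (equivalently $W^{1,\infty}$, since $\widehat Q$ is Lipschitz after extension) coefficients: one cannot differentiate the equation naively. The clean route is Nirenberg's translation/difference-quotient argument applied to the whole-space equation $-\Div\widehat Q\nabla\phi = f$, testing with second difference quotients of $\phi$ and using the uniform ellipticity plus the uniform Lipschitz bound on $\widehat Q$ to bound $\|\tau_h\nabla\phi - \nabla\phi\|_{L^2}/|h|$ uniformly in $h$; this yields $\nabla\phi\in H^1$ with the stated bound. Alternatively, since this is a standard fact, I would simply invoke a reference for full-space second-order elliptic regularity with $W^{1,\infty}$ coefficients. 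Everything else (the extension, the coercivity, the duality, the interpolation) is routine.
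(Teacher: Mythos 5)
Your proposal is essentially correct, but it proves the estimate by a genuinely different route than the paper. The paper builds the extension so that $\widehat Q$ is a \emph{convex combination} $(1-\phi(x))Q(x^*)+\phi(x)Q(x)$ of $Q$ with a fixed constant matrix $Q(x^*)$, using convexity of the set of uniformly positive definite matrices; this makes $\widehat Q$ constant outside a compact set, which is exactly what is needed to invoke the parameter-ellipticity machinery of Denk--Plo\ss--Rau--Seiler (their Lemma~3.14 with $\sigma=0$, $r=\lceil|s-1|\rceil=1$), giving the estimate for all $s\in[0,2]$ in one stroke. Your cutoff-plus-$\epsilon\,\mathrm{Id}$ construction of $\widehat Q$ is in substance the same convexity trick (a convex combination of $Q$ and $\epsilon\,\mathrm{Id}$) and is fine. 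For the estimate you instead do the endpoints by hand (G\aa rding at $s=1$, difference quotients at $s=2$, duality at $s=0$) and interpolate; this is more elementary and self-contained, and it sidesteps the regularity bookkeeping the paper has to address in the remark following the lemma (a naive application of the cited result would ask for $\BUC^2$ coefficients, rescued only because the operator is in divergence form). The price of your route is the usual one for duality/interpolation arguments: to ``interpolate the inverse'' and to run the $s=0$ duality you need the operator $\lambda-\Div\widehat Q\nabla$ to be \emph{surjective} $H^2(\R^\dm)\to L^2(\R^\dm)$ (and $H^1\to H^{-1}$), not merely to satisfy the a priori lower bounds; the two endpoint estimates alone give injectivity and closed range. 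For $\lambda$ large this surjectivity does hold (Lax--Milgram on $\R^\dm$ plus the global $H^2$ regularity you establish by difference quotients), but you should say so explicitly rather than attribute the boundedness of the inverse to ``the two endpoint estimates.'' With that point made precise, your argument is complete.
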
 
\begin{proof}
We first construct the extension. As $Q$ is uniformly positive definite in some open superset $\widetilde \Omega$ which contains $\overline\Omega$, there is a $C^\infty$-domain $\Omega'$ with $\overline\Omega \subseteq \Omega' \subseteq \widetilde\Omega,$ which can be constructed by approximation with mollified functions in the supremum norm. As $\Omega'$ is smooth with smooth boundary $\Gamma'$, there is a small tubular neighborhood  $$\Gamma'_\eps=\{x \in \R^\dm ~|~ \dist(x,\Gamma')<\eps\}$$ of $\Gamma'$, which can be parameterized by the normal vector, i.e.\ there is a smooth bijective map \[\gamma: (-\eps,\eps)\times \Gamma' \rightarrow \Gamma'_\eps; (h,x') \mapsto \gamma(h,x'):=x'+h \cdot \partial_\nu(x').\] Choosing $\eps>0$ small enough, it is possible to guarantee $\Gamma'_\eps \in \widetilde\Omega \setminus \overline\Omega,$ so $Q$ is defined on $\overline{\Omega' \cup \Gamma'_\eps}$.
For the extension, let $\psi \in C^\infty(\R,[0,1])$ be a strictly decreasing function satisfying $\psi = 1$ on $(-\infty, \frac{-\eps}{2})$ and $\psi = 0$ on $(\frac{\eps}{2}, \infty)$. Then, we obtain $\phi \in C^\infty(\R^{\dm})$ by setting $\phi = 1$ on $\Omega' \setminus \Gamma'_\eps$, $\phi = 0$ on $\R^\dm \setminus (\Gamma'_\eps \cup \Omega')$, and $\phi(x)\coloneqq\psi(h)$ for $x=\gamma(h,x') \in \Gamma'_\eps$. Hence, we can define $\widehat Q(x)=(1-\phi(x))Q(x^*)+\phi(x)Q(x)$ for an arbitrary but fixed $x^* \in \Omega$, and the new matrix $\widehat Q \in \BUC^{1}(\R^\dm, \R^{\dm \times \dm})$ still satisfies~\eqref{posdefA} as the set of uniform positive definite matrices is convex. 

For the ellipticity estimate, we use parabolic theory from \cite{DPRS22}. The constructed extension $\widehat Q$ satisfies their assumption $(S2)$ (its entries are $\BUC^1(\R^\dm)$-functions which are constant for large $|x|$), and a simple calculation shows that due to the uniform positive definiteness, the resulting operator $\lambda-\Div Q \nabla$ is also parameter-elliptic. Hence the assumptions of  \cite[Lemma~3.14]{DPRS22} are satisfied for $\sigma=0$ and $r=\lceil|s-1|\rceil=1$ and, we obtain the estimate in the $\lambda$-dependent spaces by \cite[Lemma~3.14]{DPRS22}. As the constant is allowed to depend on $\lambda$, this finishes the proof as we switch to $\lambda$-independent spaces.
\end{proof}
\begin{bemerkung}
A thorough comparison of regularities will show that a strict application of \cite[Lemma~3.14]{DPRS22} would need $q_{ij} \in \BUC^{2}(\Omega)$. However, as our operator is in divergence form, the coefficients do not need to be multipliers in $H^{s-2}(\R^\dm)$ but only in $H^{s-1}(\R^\dm)$, whence one can deduce that $\BUC^1(\Omega)$ is actually sufficient as $|s-1|\leq 1$ in our case. For details (cf.\ \cite[Section~7.2.1]{Plo24}).   
\end{bemerkung}
Now we can show that the minimal realization of $-\Div \nabla Q$ is well defined and its domain is given by $H_0^2(\Omega)$.
\begin{proposition}\label{IncMin2}
Let $B_{\min}$, the minimal realization of $-\Div Q \nabla$, be the closure of \[{B_N|}_{C_c^\infty(\Omega)}={B_0|}_{C_c^\infty(\Omega)}={(-\Div \nabla Q)|}_{C_c^\infty(\Omega)}.\] Then the following holds.
\begin{enumerate}[(i)]
\item $B_{\min}$ is well defined. 
\item On the space $C_c^\infty(\Omega)$ the graph norm $\|u\|_\Omega+\|B_0^* u\|_\Omega$ is equivalent to the full $H^2(\Omega)$-norm, whence $D(B_{\min})=H_0^2(\Omega)$.
\item  We have ${(-\Div Q \nabla)|}_{H_0^2(\Omega)}={B_{\min}}={B_0|{_{H_0^2(\Omega)}}}.$
 \end{enumerate}
\end{proposition}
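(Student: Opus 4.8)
The plan is to prove the three statements of Proposition~\ref{IncMin2} essentially in the order they are stated, with part~(ii) being the technical core.

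\textbf{Part (i).} For $B_{\min}$ to be well defined, one needs ${(-\Div Q\nabla)}|_{C_c^\infty(\Omega)}$ to be closable in $L^2(\Omega)$. This is immediate: by Lemma~\ref{Lem:Sett} the test functions lie in $D(B_N)$ and $B_N\phi=-\Div Q\nabla\phi$ there, and $B_N$ is self-adjoint (hence closed); a restriction of a closed operator is closable, so its closure $B_{\min}$ exists. The three expressions coincide on $C_c^\infty(\Omega)$ because $B_0=B_N|_{D(B_N)\cap D(B_D)}$ and $C_c^\infty(\Omega)\subseteq D(B_N)\cap D(B_D)$, again by Lemma~\ref{Lem:Sett}, and on test functions all three equal the classical $-\Div Q\nabla$.

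\textbf{Part (ii).} The goal is the two-sided estimate, for $\phi\in C_c^\infty(\Omega)$,
\[
c\,\|\phi\|_{H^2(\Omega)}^2 \;\le\; \|\phi\|_\Omega^2 + \|B_0^*\phi\|_\Omega^2 \;\le\; C\,\|\phi\|_{H^2(\Omega)}^2.
\]
The upper bound is routine: $B_0^*\phi=-\Div Q\nabla\phi$ for test functions (by Proposition~\ref{IncMin1}, or directly since $\phi\in D(B_N)$), and since $Q\in C^{1,1}$ the operator $-\Div Q\nabla$ is a second-order differential operator with Lipschitz coefficients, so it is bounded $H^2(\Omega)\to L^2(\Omega)$. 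The lower bound is where the elliptic regularity result Lemma~\ref{Lem:EllReg} enters. For $\phi\in C_c^\infty(\Omega)$, extend by zero to $\R^\dm$ (still a test function), pick $\lambda\ge\lambda_0$ from Lemma~\ref{Lem:EllReg} with $s=2$, and estimate
\[
\|\phi\|_{H^2(\R^\dm)} \le C_\lambda\,\|(\lambda-\Div\widehat Q\nabla)\phi\|_{L^2(\R^\dm)} \le C_\lambda\big(\|\Div Q\nabla\phi\|_\Omega + \lambda\|\phi\|_\Omega\big),
\]
using that $\widehat Q=Q$ on $\Omega=\supp\phi$ and $(\Div\widehat Q\nabla)\phi$ is supported in $\Omega$. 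Since $\|\phi\|_{H^2(\Omega)}\le\|\phi\|_{H^2(\R^\dm)}$ and $-\Div Q\nabla\phi=B_0^*\phi$, this gives the lower bound (with constants absorbing $\lambda$). Consequently the graph norm of $B_0^*$ on $C_c^\infty(\Omega)$ is equivalent to the $H^2(\Omega)$-norm; taking closures, $D(B_{\min})$ is the $H^2(\Omega)$-closure of $C_c^\infty(\Omega)$, which is exactly $H_0^2(\Omega)$.

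\textbf{Part (iii).} It remains to identify $B_{\min}$ with the restriction of $-\Div Q\nabla$ to $H_0^2(\Omega)$ and with $B_0|_{H_0^2(\Omega)}$. By (ii), $D(B_{\min})=H_0^2(\Omega)$; for $u\in H_0^2(\Omega)$, take $\phi_n\in C_c^\infty(\Omega)$ with $\phi_n\to u$ in $H^2(\Omega)$, so $-\Div Q\nabla\phi_n\to-\Div Q\nabla u$ in $L^2(\Omega)$ by boundedness, hence $B_{\min}u=-\Div Q\nabla u$, giving the first identification. For the second, note $H_0^2(\Omega)\subseteq D(B_N)$ (since $H_0^2(\Omega)\subseteq H^1(\Omega)=D(\b)$, and the same $H^2$-limit argument shows $-\Div Q\nabla u$ represents the form, or one uses that $B_N$ is closed and $C_c^\infty\subseteq D(B_N)$); moreover $H_0^2(\Omega)\subseteq H_0^1(\Omega)$ so these functions also lie in $D(B_D)$, hence in $D(B_0)=D(B_N)\cap D(B_D)$, and $B_0$ agrees with $-\Div Q\nabla$ there.

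\textbf{Main obstacle.} The only genuinely non-trivial point is the lower bound in part~(ii), i.e.\ global $H^2$-control of a test function by its $L^2$-norm and the $L^2$-norm of $\Div Q\nabla\phi$. All the work for this is already packaged into Lemma~\ref{Lem:EllReg} (via the parabolic/parameter-elliptic machinery of \cite{DPRS22} and the $\BUC^1$-extension $\widehat Q$); the remaining care is bookkeeping — confirming that zero-extension keeps $\phi$ a test function, that $\widehat Q$ and $Q$ agree on the support, and that switching from the $\lambda$-dependent to $\lambda$-independent $H^2$-norm only costs a $\lambda$-dependent constant, which is harmless since $\lambda$ is fixed.
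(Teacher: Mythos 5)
Your proposal is correct and follows essentially the same route as the paper: closability of the restriction via the self-adjointness of $B_N$ for (i), the two-sided graph-norm/$H^2$ estimate on test functions via the zero extension and Lemma~\ref{Lem:EllReg} with $s=2$ for (ii), and approximation by test functions in $H^2(\Omega)$ together with Proposition~\ref{b0gen} to place $H_0^2(\Omega)$ inside $D(B_0)$ for (iii). The only cosmetic slip is writing $\Omega=\supp\phi$ instead of $\supp\phi\subseteq\Omega$, which does not affect the argument.
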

\begin{proof}~
\begin{proofenum}[~~\upshape(i)]
\item The operators ${B_N|}_{C_c^\infty(\Omega)}$ and ${B_0|}_{C_c^\infty(\Omega)}$  are closable due to the self-adjointness of $B_N$. Thus $B_{\min}$ is well defined. 
\item  Let $\phi \in C_c^\infty(\Omega)$ be an arbitrary test function. Then $B_0^* \phi=-\Div Q \nabla \phi$ by Proposition~\ref{IncMin1} and thus $\|\phi\|_\Omega^2+\|B_0^*\phi\|^2_\Omega \leq \|\phi\|_{H^2(\Omega)}.$
For the reverse inequality we use the matrix $\widehat Q$ from Lemma~\ref{Lem:EllReg}. To that end, we extend $\phi$ by zero to the whole space and write $e^0 \phi$ for this extension. Choose any fixed $\lambda>\lambda_0$ where $\lambda_0$ is taken from Lemma~\ref{Lem:EllReg}. Then we have 
    $(\lambda -\Div \widehat Q \nabla) e^0 \phi = -e^0 (\Div Q \nabla \phi)+e^0(\lambda\phi),$ as $\supp (\Div Q \nabla \phi)\subseteq \supp\,  \phi \subseteq \Omega.$ Now Lemma~\ref{Lem:EllReg} with $s=2$ yields there is some $C_\lambda>0$ such that 
    \begin{align*} 
    \|\phi\|_{H^2(\Omega)}=\| e^0 \phi\|_{H^2(\R^\dm)}&\leq C_\lambda \big(\| e^0(\Div Q \nabla \phi)\|_{\R^\dm}+\lambda \|e^0 \phi\|_{\R^\dm}\big)\\ & \leq \lambda C_\lambda (\| B_0^* \phi \|_\Omega+\|\phi\|_\Omega),
    \end{align*} with $C_\lambda$ independent of the choice of $\phi$. Thus we have 
    
     \[H_0^2(\Omega)=\overline{C_c^\infty(\Omega)}^{\|\cdot\|_{H^2(\Omega)}}=\overline{C_c^\infty(\Omega)}^{\|\cdot\|_{D(B_0^*)}}=D(B_{\min}).\] 
     \item As $B_{\min}$ is closed, the first equality follows straight-forward. Now let $u \in H_0^2(\Omega)=D(B_{\min}) \subseteq D(\b_D)$. Then there is a sequence of test functions such that $\phi_n \rightarrow u$ with respect to the $H^2(\Omega)$-norm.
    For $\phi_n$ we have for all $v \in H^1(\Omega)$ 
    $$\<Q \nabla \phi_n, \nabla v\>_\Omega=\<-\Div Q \nabla \phi_n,v \>_\Omega.$$
    Due to $H^2$-convergence this also holds for $u$, and $\Div Q \nabla \phi_n$ converges to $\Div Q \nabla u \in L^2(\Omega).$ So by definition $u \in D(B_N) \cap D(B_D)=D(B_0)$ and $B_0 u=B_N u=-\Div Q \nabla u=B_{\min} u$, which shows $B_{\min} \subset B_0$. \qedhere
\end{proofenum}

   \end{proof}
Now we use duality to define $B_{\max}$. Recall that for a function $u \in L^2(\Omega)$, the induced regular distribution $[u]$ acts on a function $\phi$ via $[u](\phi)=\<u,\overline\phi\>_\Omega$.
\begin{definition}\label{DefBDist} Let $\BB: L^2(\Omega) \rightarrow (H_0^2(\Omega))'$ be defined by $$\BB u(\phi)\coloneqq\<u,-\Div Q\nabla \phi\>$$ for all $\phi \in H_0^2(\Omega)$, and define $B_{\max}$ as its $L^2(\Omega)$-realization, i.e.\ we let $$D(B_{\max})\coloneqq\{u \in L^2(\Omega) ~|~ \BB u \in (L^2(\Omega))'\},$$ and identify $B_{\max} u$ with $g \in L^2(\Omega)$, where $g$ is the unique element for which $(\BB u)(\overline\phi)=[g](\phi)$ holds.
\end{definition}
 It can also be verified that this definition is compatible with the representations $\BB u=-\sum_{i,j}\partial_i q_{ij} \partial_j u=-\Div Q \nabla u$, where each derivative is considered as weak derivative, and the multiplication with the coefficients in $H^{-1}(\Omega)$ (cf.\ \cite[Lemma~3.17]{Plo24}).   
Now we can characterize $B_{\max}$ by duality as follows:
\begin{proposition}\label{IncMax}
    We have $B_{\max}=({B_0|}_{H_0^2(\Omega)})^*=B_{\min}^*$, as well as $B_{\max}^*=B_{\min}.$ In particular, this shows that $B_{\max}$ is closed.
\end{proposition}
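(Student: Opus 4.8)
The plan is to read off each of the three asserted facts almost directly from the definitions: the first is a matter of unwinding Definition~\ref{DefBDist} and the Hilbert-space adjoint side by side, and the other two then follow from elementary adjoint calculus.

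First I would record what Proposition~\ref{IncMin2} already gives about $B_{\min}$: its domain is $H_0^2(\Omega)$, one has $B_{\min} = B_0|_{H_0^2(\Omega)}$, and $B_{\min}\phi = -\Div Q\nabla\phi$ in $L^2(\Omega)$ for every $\phi \in H_0^2(\Omega)$. In addition $B_{\min}$ is densely defined (its domain contains $C_c^\infty(\Omega)$), closed (it is a closure), and symmetric (a restriction of the self-adjoint operator $B_N$, cf.\ Theorem~\ref{GenOP1}~(i)); hence $B_{\min}^*$ is well defined, it is densely defined because $B_{\min}\subseteq B_{\min}^*$, and $B_{\min}^{**}=\overline{B_{\min}}=B_{\min}$.

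Next I would compare the two ``maximal'' descriptions. By definition of the adjoint, $u\in D(B_{\min}^*)$ with $B_{\min}^*u=g$ means exactly that $\langle u,-\Div Q\nabla\phi\rangle_\Omega=\langle g,\phi\rangle_\Omega$ for all $\phi\in H_0^2(\Omega)$. By Definition~\ref{DefBDist}, $u\in D(B_{\max})$ with $B_{\max}u=g$ means that the functional $\phi\mapsto \BB u(\phi)=\langle u,-\Div Q\nabla\phi\rangle$ on $H_0^2(\Omega)$ is represented by some $g\in L^2(\Omega)$ via $(\BB u)(\overline\phi)=[g](\phi)$, and this $g$ is unique since $C_c^\infty(\Omega)\subseteq H_0^2(\Omega)$ is dense in $L^2(\Omega)$. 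Spelling out $[g](\phi)=\langle g,\overline\phi\rangle_\Omega$ and using that $H_0^2(\Omega)$ and the real symmetric matrix $Q$ are invariant under complex conjugation, this condition is, after replacing $\overline\phi$ by $\phi$, again exactly $\langle u,-\Div Q\nabla\phi\rangle_\Omega=\langle g,\phi\rangle_\Omega$ for all $\phi\in H_0^2(\Omega)$. The two conditions coincide, so $B_{\max}=B_{\min}^*=(B_0|_{H_0^2(\Omega)})^*$.

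Finally, since $B_{\max}=B_{\min}^*$ is densely defined, taking adjoints once more gives $B_{\max}^*=B_{\min}^{**}=B_{\min}$, and $B_{\max}$ is closed simply because it is an adjoint operator. I expect the only slightly delicate point to be the bookkeeping of complex conjugations when matching the distributional pairing $(\BB u)(\overline\phi)=[g](\phi)$ from Definition~\ref{DefBDist} with the sesquilinear pairing in the definition of the Hilbert-space adjoint; everything else is a direct consequence of Proposition~\ref{IncMin2} and standard operator theory.
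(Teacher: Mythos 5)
Your proposal is correct and follows essentially the same route as the paper: both unwind Definition~\ref{DefBDist} to see that the defining condition for $B_{\max}u=g$ is literally the adjoint relation $\langle u,B_0\phi\rangle_\Omega=\langle g,\phi\rangle_\Omega$ for all $\phi\in H_0^2(\Omega)$, and then obtain $B_{\max}^*=B_{\min}^{**}=B_{\min}$ from the closedness and symmetry of $B_{\min}$ as a restriction of the self-adjoint $B_N$. Your extra care with the conjugation bookkeeping in the pairing $(\BB u)(\overline\phi)=[g](\phi)$ is a harmless refinement of the same argument.
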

\begin{proof}
    Let $u \in D(B_{\max}).$ This, equivalently, means $u \in L^2(\Omega)$ and there is an  $f \in L^2(\Omega) $ such that $\<f,\phi\>_\Omega=\BB u(\phi)=\<u,-\Div Q \nabla \phi \>_\Omega=\<u,B_0 \phi\>_\Omega$ for all $\phi \in H_0^2(\Omega).$ By definition this means $u \in D(({B_0|}_{H_0^2(\Omega)})^*)$ and $f=({B_0|}_{H_0^2(\Omega)})^* u.$ The second assertion follows directly as $B_{\min}$ is closed and the restriction of the self-adjoint operator $B_N$, and therefore symmetric.
   \end{proof}  
In a final step we remove the restriction to $H_0^2$ by showing that the smooth functions are actually a core of $B_{\max}$.   

To that end, we restate the definition of the spaces $H^s_B(\Omega)$ from (cf.\ Section~\ref{Tr:weak}) in a more precise manner by setting $H^s_B(\Omega):=\{u \in H^s(\Omega)~|~ \BB u \in L^2(\Omega) \}$ for $s \geq 0$ equipped with the norm $\|u\|_{H^s(\Omega)}+\|B_{\max} u\|_\Omega$, and in particular we have $D(B_{\max})=H^0_B(\Omega)$. We will also write $\|\cdot\|_B$ instead of $\| \cdot\|_{D(B_{\max})}$ or $\| \cdot\|_{H^0_B(\Omega)}$. It might be more accurate to call those spaces $H^s_{B_{\max}}(\Omega)$, but we refrain from doing so for sake of readability, also emphasizing the fact that $B_{\max}$ actually takes the role of the abstract operator $B$ that remained undefined in Section~\ref{Abstract}.

Furthermore, we even may explicitly characterize $D(N^{\b})$ and $N^{\b}$ as we are in the setting of~\cite[Chapter~11.4]{BGM22}, whence we have Lemma~\ref{cono} at our disposal and existence,  continuity, and surjectivity of $\gamma_Q^s: H^s_B(\Omega) \rightarrow H^{s-3/2}(\Gamma)$ is assured for all $s \in [\frac12,\frac32]$. All those Neumann traces are continuous extensions of $\nu \cdot \tr Q \nabla$ on $C^\infty(\overline\Omega)$ to  $H^{s}_B(\Omega)$ by the density which we also ascertain below. 
\begin{satz}~\label{SatzA1}
    \begin{enumerate}[\upshape(i)]
        \item For all $s \geq 0$, the space $C^\infty(\overline\Omega)$ is dense in $H^s_B(\Omega)$.
        \item We have $B_0=B_{\min}$ \upshape{(}i.e. $D(B_0)=H_0^2(\Omega)$ and $C_c^\infty(\Omega)$ is a core of $B_0$\upshape{)} and $B_0^*=B_{\max}.$ 
        \item We have $N^{\b}=-\partial_\nu^Q=-\gamma^{3/2}_Q$, so in particular $D(N^{\b}) = D(\partial_\nu^Q)= H^{3/2}_B(\Omega).$
        \item $D(B_D)\subseteq H^{3/2}_B(\Omega), D(B_N)\subseteq H^{3/2}_B(\Omega).$ 
    \end{enumerate}
    \end{satz}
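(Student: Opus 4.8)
The plan is to prove the four assertions essentially in the order listed, using them cumulatively, with the central ingredient being the elliptic regularity result of Lemma~\ref{Lem:EllReg} together with the trace and Green-formula machinery of Lemma~\ref{cono}.

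\textbf{Step (i): Density of $C^\infty(\overline\Omega)$ in $H^s_B(\Omega)$.} For $s\ge 0$ I would argue as follows. Given $u\in H^s_B(\Omega)$, extend $u$ from $\Omega$ to a slightly larger smooth domain (or all of $\R^\dm$) in such a way that $\BB$ applied to the extension is controlled; here one exploits that $B_{\max}u\in L^2(\Omega)$ and that $\BB$ is in divergence form so the coefficients only need to be multipliers on $H^{s-1}$. Concretely, I would first reduce to the case where $u$ has an extension $\tilde u\in H^s(\R^\dm)$ with $\Div\widehat Q\nabla\tilde u\in L^2$ near $\overline\Omega$, then mollify $\tilde u$ on $\R^\dm$; the mollified functions are smooth, converge to $\tilde u$ in $H^s$, and—because $\Div\widehat Q\nabla$ commutes with mollification up to commutator terms that vanish by the $\BUC^1$-regularity of $\widehat Q$—also converge in the $B$-graph norm after restriction to $\Omega$. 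An alternative, perhaps cleaner route is to cite the density statement directly from \cite[Chapter~11]{BGM22} (the spaces $H^s_B(\Omega)$ there are defined precisely with $C^\infty(\overline\Omega)$ dense). I expect this to be the genuinely technical step; the extension-and-mollification argument near a Lipschitz boundary is the main obstacle, and I would lean on the $C^{1,1}$-hypothesis and the divergence structure to push it through, or else invoke \cite{BGM22} wholesale.

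\textbf{Step (ii): $B_0=B_{\min}$ and $B_0^*=B_{\max}$.} By Proposition~\ref{IncMin2}(iii) we already have $B_{\min}\subseteq B_0$, and by Proposition~\ref{IncMin2}(ii) that $D(B_{\min})=H_0^2(\Omega)$ with $C_c^\infty(\Omega)$ a core. For the reverse inclusion $B_0\subseteq B_{\min}$, I would take $u\in D(B_0)=D(B_N)\cap D(B_D)$; since $\tr u=0$, assertion (iv) of Lemma~\ref{cono} (or rather its consequence that $\ker(\tr)\subseteq H^{3/2}(\Omega)$ plus elliptic regularity) would eventually give $u\in H^2_0(\Omega)$—but it is cleaner to bootstrap: $u\in D(B_D)$ means $u\in H_0^1(\Omega)$ with $Bu\in L^2$, and interior plus boundary elliptic regularity for the Dirichlet problem with $C^{1,1}$ coefficients on a Lipschitz domain... actually, to stay within the paper's tools, I would instead note $B_0\subseteq B_{\max}$ (clear from Definition~\ref{DefBDist} since elements of $D(B_0)$ pair correctly against $H_0^2$) and combine with Proposition~\ref{IncMax}, which says $B_{\max}=B_{\min}^*$; then $B_0\subseteq B_{\max}=B_{\min}^*$, and since $B_0$ is symmetric and $B_{\min}\subseteq B_0$, we get $B_{\min}\subseteq B_0\subseteq B_{\min}^*$, and $B_0$ symmetric forces... hmm, this needs care. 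The clean argument: $B_0^*\supseteq B_{\min}^*=B_{\max}$ since $B_{\min}\subseteq B_0$; conversely $B_0^*\subseteq B_{\max}$ because $D(B_0)\supseteq C_c^\infty(\Omega)$ so the adjoint acts distributionally and lands in $D(B_{\max})$ by definition. Hence $B_0^*=B_{\max}$. Taking adjoints once more, $B_0^{**}=B_{\max}^*=B_{\min}$ by Proposition~\ref{IncMax}, and since $B_0$ is closed (Theorem~\ref{GenOP1}(i)), $B_0=B_0^{**}=B_{\min}$.

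\textbf{Steps (iii) and (iv): Identification of $N^\b$ and $H^{3/2}$-regularity of the form domains.} With $B_0^*=B_{\max}$ in hand, $D(N^\b)\subseteq D(B_0^*)=D(B_{\max})$ and the defining identity for $N^\b$ matches, via Green's first formula \eqref{Def:WeakNeuA}, exactly the definition of $-\partial_\nu^Q$; so $N^\b\subseteq -\partial_\nu^Q$. For the reverse, I would use Lemma~\ref{Lem:Nsur} ($N^\b$ surjective) together with Lemma~\ref{cono}: $-\partial_\nu^Q=-\gamma^{3/2}_Q$ on $H^{3/2}_B(\Omega)$, and $\ker\gamma^{3/2}_Q\subseteq H^{3/2}(\Omega)\subseteq D(\b)\cap D(B_{\max})$, so by Lemma~\ref{Magic} (applied with $S_1=N^\b$, $S_2=-\gamma^{3/2}_Q$, noting $\ker(-\gamma^{3/2}_Q)\subseteq D(N^\b)$ since those kernel elements lie in $D(B_N)=\ker N^\b$) we conclude $N^\b=-\gamma^{3/2}_Q=-\partial_\nu^Q$, whence $D(N^\b)=H^{3/2}_B(\Omega)$. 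Finally (iv): for $u\in D(B_N)=\ker N^\b=\ker\gamma^{3/2}_Q$, Lemma~\ref{cono}(iii) gives directly $u\in H^{3/2}(\Omega)$, hence $D(B_N)\subseteq H^{3/2}_B(\Omega)$; for $u\in D(B_D)$ we have $\tr u=0$, and Lemma~\ref{cono}(iii) again gives $u\in H^{3/2}(\Omega)$, so $D(B_D)\subseteq H^{3/2}_B(\Omega)$. The main obstacle here is making sure the compatibility of $N^\b$ (defined via the abstract form) with the concrete trace $\gamma^{3/2}_Q$ is established on a dense set and then extended—this is where density from part (i) and the surjectivity lemma do the work, so part (i) really is the linchpin of the whole theorem.
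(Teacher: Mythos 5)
Your overall architecture matches the paper's — (i) is indeed the linchpin, and (ii)–(iv) are then derived using Lemma~\ref{cono}, Lemma~\ref{Magic} and the surjectivity of the traces — but there are genuine gaps in each of the first three steps. The most serious is in (i): your primary plan, to extend $u\in H^s_B(\Omega)$ to some $\tilde u$ on $\R^\dm$ with $\Div\widehat Q\nabla\tilde u\in L^2$ near $\overline\Omega$ and then mollify, cannot work for $s<2$. If such an extension existed, then (locally, by the very elliptic estimate of Lemma~\ref{Lem:EllReg} with $s=2$) $\tilde u$ would be $H^2$ near $\overline\Omega$, so $u\in H^2(\Omega)$ — but a generic element of $D(B_{\max})=H^0_B(\Omega)$ has no Sobolev regularity beyond $L^2$. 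The fallback of citing \cite{BGM22} wholesale is also not available: their density lemma is stated for $B=\Delta$, and the paper has to \emph{adapt} that proof. The actual argument is a duality one: a functional $\Lambda$ vanishing on $C^\infty(\overline\Omega)$ is represented via Hahn--Banach by $(h_1,h_2)\in\dot H^{-s}(\Omega)\times L^2(\Omega)$; testing against $C_c^\infty(\R^\dm)$ identifies $h_1=\Div\widehat Q\nabla e^0h_2$, Lemma~\ref{Lem:EllReg} (applied with exponent $2-s$) upgrades $e^0h_2$ to $\dot H^{2-s}(\Omega)$, and approximating $h_2$ by test functions shows $\Lambda\equiv 0$.

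In (ii) you have the adjoint inclusion backwards: $B_{\min}\subseteq B_0$ gives $B_0^*\subseteq B_{\min}^*=B_{\max}$, not $\supseteq$; so both of your arguments only yield the easy half $B_0^*\subseteq B_{\max}$. The hard half, $B_{\max}\subseteq B_0^*$ (equivalently $B_0\subseteq B_{\min}$), is not automatic — a priori $D(B_0)=D(B_N)\cap D(B_D)$ could be strictly larger than $H_0^2(\Omega)$ — and is exactly where the paper uses the density from (i): one approximates $u\in D(B_{\max})$ by $\phi_n\in C^\infty(\overline\Omega)$ in the graph norm and uses that $\la\NN^\b\phi_n,\tr v\ra_\Gamma=0$ for $v\in D(B_0)$ (since $\tr v=0$) to pass to the limit in Green's formula. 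Similarly, in (iii) you apply Lemma~\ref{Magic} with $S_1=N^\b$, $S_2=-\gamma_Q^{3/2}$, which requires the hypothesis $N^\b\subseteq-\gamma_Q^{3/2}$ — but that containment is the hard regularity statement $D(N^\b)\subseteq H^{3/2}_B(\Omega)$ that you are trying to prove. The workable route (the paper's) is the opposite sandwich $-\gamma_Q^{3/2}\subseteq N^\b\subseteq-\gamma_Q^1$ (both containments obtained by approximation with smooth functions, again using (i)), then $\ker N^\b\subseteq\ker\gamma_Q^1\subseteq H^{3/2}_B(\Omega)$ by Lemma~\ref{cono}~(iii), and finally Lemma~\ref{Magic} with the \emph{surjective, smaller} operator $-\gamma_Q^{3/2}$ in the role of $S_1$. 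Your step (iv) is essentially correct and coincides with the paper's.
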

   \begin{proof}~
    \begin{proofenum}[~~\upshape(i)]
    \item In the case $s\geq 2$ the space $H^s_B(\Omega)$ coincides with $H^s(\Omega)$. The cases $s \in [0,2)$ will follow by adapting the proof of \cite[Lemma~2.13]{BGM22}, where this density was shown for $B=\Delta$. Consider $\dot{H}^s(\Omega)\coloneqq\{u \in H^s(\R^\dm) ~|~ \supp\, u \in \overline{\Omega} \}$ (cf.\ \cite[Section~2.3]{BGM22}) and the map $$\iota: H^{s}_B(\Omega) \rightarrow H^{s}(\Omega)\times L^2(\Omega),~~u \mapsto \iota(u)=(u,B_{\max} u),$$ which is an isometric isomorphism from $H^{s}_B(\Omega)$ to the closed subspace $\iota(H^{s}_B(\Omega))$ as $B_{\max}$ is a closed operator. Let $\Lambda$ be any functional in $(H^s_B(\Omega))'$, then $\Lambda \circ \iota^{-1}$ is a linear, bounded functional on $\iota(H^{s}_B(\Omega)),$ which can be extended to a functional $\widehat\Lambda \in (H^{s}(\Omega)\times L^2(\Omega))'=\dot H^{-s}(\Omega)\times L^2(\Omega)$ using Hahn-Banach's theorem. Hence, by \cite[p.27-29]{BGM22}, there are representatives $h_1 \in \dot H^{-s}(\Omega)$, $h_2 \in L^2(\Omega)$ such that, given any $u \in H^{s}_B(\Omega)$, for any $F \in H^{s}(\R^\dm)$, $G \in L^2(\R^\dm)$ satisfying $F|_{\Omega}=u$ and $G|_{\Omega}=B_{\max}u$ we have $$\Lambda(u)=\<h_1,F\>_{H^{-s}(\R^\dm)\times H^{s}(\R^\dm)}+\<e^0 h_2,G\>_{\R^\dm}.$$
    Note that for $s=0$, we may replace $\dot H^0(\Omega)$ with the zero extension of $L^2(\Omega)$-functions and the dual pairing with the standard inner product on $L^2(\Omega)$. In particular, if we take $u=\phi|_{\Omega}$, for any $\phi \in C_c^\infty(\R^\dm)$, we obtain  $$\Lambda(\phi|_{\Omega})=\<h_1,\phi\>_{H^{-s}(\R^\dm)\times H^{s}(\R^\dm)}+\<e^0h_2,-\Div \widehat Q \nabla \phi\>_{\R^\dm}$$ due to $(-\Div \widehat Q \nabla \phi)|_{\Omega}=(-\Div Q \nabla \phi)|_{\Omega}=B_0^* \phi|_{\Omega}=B_{\max}\phi|_{\Omega}$ by Proposition~\ref{IncMin1}, where $\widehat Q$ once more denotes the extended matrix from Lemma~\ref{Lem:EllReg}.
    
    In order to show the desired density, we assume that for any $\phi \in C^\infty(\overline\Omega)$ we had $\Lambda(\phi)=0,$ and deduce that this implies $\Lambda = 0$. By definition, however, $\Lambda(\phi)=0$ means that we have
    $$\<h_1,\phi\>_{H^{-s}(\R^\dm)\times H^{s}(\R^\dm)}=\<e^0 h_2,\Div \widehat Q \nabla \phi\>_{\R^\dm} =(\Div \widehat Q \nabla e^0h_2)(\phi)$$ for all $\phi \in C_c^\infty(\R^\dm)$ and by density for all $\phi \in H^2(\R^\dm)$. At first we consider $-\Div \widehat Q \nabla e^0 h_2=(-\Div \circ m_{\widehat Q} \circ \nabla)(e_0 h_2)$ as an element of $H^{-2}(\R^\dm)$ consisting of the separate mappings $\nabla:~L^2(\R^\dm) \rightarrow (H^{-1}(\R^\dm))^\dm$,   $m_{\widehat Q}:~ (H^{-1}(\R^\dm))^\dm \rightarrow (H^{-1}(\R^\dm))^\dm$, and  $\Div:~(H^{-1}(\R^\dm))^\dm \rightarrow H^{-2}(\R^\dm)$ where $m_{\widehat Q}$ denotes the multiplication with $\widehat Q$ in $H^{-1}(\R^{\dm})$. With the usual identification of dual spaces, we obtain 
    \begin{align*}
        \<\Div \widehat Q \nabla e^0h_2,\phi\>_{H^{-2}(\R^\dm)\times H^2(\R^\dm)}
        &=(\Div \widehat Q \nabla e^0h_2)(\phi)=\<h_1,\phi\>_{H^{-s}(\R^\dm)\times H^{s}(\R^\dm)}.
    \end{align*} So $h_1=\Div \widehat Q \nabla e^0h_2 \in H^{-2}(\R^\dm)$, or for some large $\lambda \geq \lambda_0$ also $-h_1+\lambda e^0h_2=(\lambda-\Div \widehat Q \nabla) e^0h_2.$
    Now by Lemma~\ref{Lem:EllReg} applied with $2-s \in (0,2]$, 
    we have $$\|e^0h_2\|_{H^{2-s}(\R^\dm)} \leq  C_\lambda \|\lambda e_0h_2-h_1\|_{H^{-s}(\R^{\dm})},$$
    which shows that $e^0h_2 \in H^{2-s}(\R^\dm)$ and as $\supp\,e^0h_2 \in \overline\Omega$, also $e^0h_2 \in \dot{H}^{2-s}(\Omega).$     
        
     However the space of zero extensions of $C_c^\infty(\Omega)$ lies dense in $\dot{H}^{2-s}(\Omega)$ (cf.\ \cite[(2.82)]{BGM22}), and there is a sequence of functions $(\psi_n)_n \subseteq C_c^\infty(\Omega)$ such that $e^0\psi_n$ converges to $e^0 h_2$ in $H^{2-s}(\R^\dm)$, which shows that $\Div \widehat Q \nabla e^0 \psi_n$ converges to $\Div \widehat Q \nabla e^0 h_2=h_1$ in $H^{-s}(\R^\dm)$. But then, for any $u \in H^{s}_B(\Omega)$ and $F \in H^{s}(\R^\dm)$ such that $F|_{\Omega}=u$, we obtain
    \begin{align*}
        \Lambda(u)&=\<h_1,F\>_{H^{-s}(\R^\dm)\times H^{s}(\R^\dm)}+\<e^0h_2, e^0 B_{\max} u\>_{\R^\dm}\\
        &=\lim\limits_{n \rightarrow\infty} \<\Div Q \nabla e^0 \psi_n,F\>_{H^{-s}(\R^\dm)\times H^{s}(\R^\dm)}+\<\psi_n,B_{\max} u\>_\Omega \\
        &=\lim\limits_{n \rightarrow\infty} \ -\<B_{\min} \psi_n,u\>_\Omega+\<\psi_n,B_{\min}^* u\>_\Omega=0.
    \end{align*}
 Hence, $\Lambda$ already vanishes on $H^s_B(\Omega)$; and we have shown that $\Lambda|_{C^\infty(\overline\Omega)}=0$ implies $\Lambda = 0,$ which yields the desired density by a standard corollary to Hahn--Banach. 
\item Let $v \in D(B_0)$ and $u \in D(B_{\max})$ be arbitrary. Because of (i) there is a sequence of functions $\phi_n$ in $C^\infty(\overline\Omega)$ such that $B_0^* \phi_n \rightarrow B_{\max} u$  and $\phi_n \rightarrow u$ in $L^2(\Omega).$ Hence for all $v \in D(B_0)$ we have
    \begin{align*} 0&=\<-N^{\b} \phi_n, \tr v\>_\Gamma\\
    &=\<-\NN^\b \phi_n, \tr v\>_\Gamma=\<v,B_0^* \phi_n\>_\Omega-\<B_0v,\phi_n\>_\Omega \rightarrow \<v,B_{\max} u\>_\Omega-\<B_0 v,u\>_\Omega, \,\end{align*}
    so $$\<v,B_{\max} u\>_\Omega-\<B_0 v,u\>_\Omega=0$$ for all $u \in D(B_{\max})$. This shows $v \in D(B_{\max}^*)$ and $B_0 v=B_{\max}^*v=B_{\min} v,$ which together with Propositions~\ref{IncMin2}~(iii) and \ref{IncMax} shows that $B_0=B_{\min}$ and $B_0^*=B_{\max}$ as claimed.
     \item $N^\b=-\partial_\nu^Q$ follows from $B_0^*=B_{\max}$ and the definition of the weak co-normal~\eqref{Def:WeakNeuA}. We show $\gamma_Q^{3/2}\subseteq -N^{\b} \subseteq \gamma_Q^1$ first. So let 
    $u \in H^{3/2}_B(\Omega).$ By (i) there is a sequence $(\phi_n)_{n} \subseteq C^\infty(\overline\Omega)$ that converges to $u$ in $H^{3/2}_B(\Omega).$ As in the proof of Proposition~\ref{IncMin1} we have
     $\<B_{\max} \phi_n,v\>_\Omega-\b(\phi_n,v)=\< -\nu \cdot \tr Q \nabla \phi_n, \tr v\>_\Gamma=\la-\gamma_Q^{3/2}\phi_n, \tr v\ra_\Gamma$ for all $v \in H^1(\Omega).$
     Hence, as the sequence $(\phi_n)_n$ in particular converges in $H^1_B(\Omega)$, we may take the limit and obtain for all $v \in H^1(\Omega)$
     \begin{align*} \<B_{\max} u,v\>_\Omega-\b(u,v)&=\lim\limits_{n \rightarrow \infty} \<B_{\max} \phi_n,v\>_\Omega-\b(\phi_n,v)\\
     &=\lim\limits_{n \rightarrow \infty}\<-\gamma_Q^{3/2}\phi_n, \tr v\>_\Gamma=\<-\gamma_Q^{3/2} u, \tr v\>_\Gamma.  \end{align*}
          As $\gamma_Q^{3/2} u \in L^2(\Gamma)$, by definition we have $u \in D(N^{\b})$ and $N^{\b}u=-\gamma_Q^{3/2}u.$ 
    For the second inclusion let $u \in D(N^{\b})$, then for all $v \in H^1(\Omega)$ we have
    \[\<B_{\max} u,v\>_\Omega-\b(u,v)=\<N^{\b}u, \tr v\>_\Gamma=\<N^{\b}u, \tr v\>_{H^{-{1/2}}(\Gamma) \times H^{1/2}(\Gamma)}\]
    as $v \in H^1(\Omega)$ and the Dirichlet trace maps continuously from $H^1(\Omega)$ to $H^{1/2}(\Gamma)$ (see Definition~\ref{Def:Strong}).
    
    Next, recall that $D(N^{\b})\subseteq H^1_B(\Omega)$ by definition. Using the density of $C^\infty(\overline\Omega)$ in $H^1_B(\Omega)$, we find  a sequence $(\phi_n)_n \subseteq C^\infty(\overline\Omega)$ that converges in $H^1_B(\Omega)$ towards $u$. 
    So continuity of $\gamma^1_Q$ from $H^{1}(\Omega)$ to $H^{-1/2}(\Gamma)$ yields
    \begin{align*}
     \<B_{\max} u,v\>_\Omega-\b(u,v)&=\lim\limits_{n \rightarrow \infty} \<B_{\max} \phi_n,v\>_\Omega-\b(\phi_n,v)=\lim\limits_{n \rightarrow \infty}\<-\gamma_Q^{1}\phi_n, \tr v\>_\Gamma\\
     &=\lim\limits_{n \rightarrow \infty}\<-\gamma_Q^{1}\phi_n, \tr v\>_{H^{-{1/2}}(\Gamma) \times H^{1/2}(\Gamma)}\\
     &=    \<-\gamma_Q^{1} u, \tr v\>_{H^{-{1/2}}(\Gamma) \times H^{1/2}(\Gamma)} . 
\end{align*} 
Hence we have \[ \<N^{\b}u, \tr v\>_{H^{-{1/2}}(\Gamma) \times H^{1/2}(\Gamma)}=\<-\gamma_Q^{1} u, \tr v\>_{H^{-{1/2}}(\Gamma) \times H^{1/2}(\Gamma)}\] for all $v \in H^1(\Omega).$ As the Dirichlet trace is surjective onto $H^{1/2}(\Gamma),$ we obtain 
\[ \<N^{\b}u, \psi\>_{H^{-{1/2}}(\Gamma) \times H^{1/2}(\Gamma)}=\<-\gamma_Q^{1} u,\psi \>_{H^{-{1/2}}(\Gamma) \times H^{1/2}(\Gamma)}\] for all $\psi \in H^{1/2}(\Gamma)$ by taking any solution of $\tr v=\psi$. Thus $-\gamma_Q^1 u=N^{\b}u$ on $H^{-1/2}(\Gamma),$ which in particular yields $\ker \gamma_Q^{3/2} \subseteq \ker N^{\b} \subseteq \ker \gamma_Q^1.$ By Lemma~\ref{cono} we have $\ker \gamma_Q^s \subseteq H^{3/2}_B=D(\gamma_Q^{3/2})$ for all $s \in [\frac12,\frac32]$. As $\gamma_Q^{3/2}$ is also surjective onto $L^2(\Gamma),$ we have $N^{\b}=\gamma_Q^{3/2}$ by Proposition~\ref{Magic}. 
\item This follows from Lemma~\ref{cono}, once more. We have that $u \in H^1_B(\Omega)$ and either $\tr u=0$ or $\gamma^{s}_Q u=0$ for any $s \in [\frac12,\frac32]$  implies $u \in H^{3/2}_B(\Omega).$\qedhere
\end{proofenum}        
    \end{proof} \qedhere 

\subsection{On the fourth-order system}\label{id2}
Applying the above results to the primary form $\a$ and its associated operator we obtain the following solution theorems for fourth-order system with Wentzell boundary conditions. We are able to solve the following system, as $N^\b$ is surjective by Lemma~\ref{Lem:Nsur}.

\begin{satz}\label{Cor:A1}
    Assume Hypotheses~\ref{hyp1.1} and \ref{HypQ}. Let $B=-\Div \nabla Q$. Then for $\uu_0=(u_{1,0},u_{2,0}) \in \Hh$ the Cauchy problem
    \begin{alignat}{4}
  \partial_t u_1   +B (\alpha  B) u_1 & =0 && \textnormal{ in }   (0,\infty)\times \Omega,\label{A10-5'}\\
  \partial_t u_2 +\beta  \partial_\nu^Q(\alpha  B) u_1 + \gamma  u_2   & = 0
   && \textnormal{ on }  (0,\infty)\times \Gamma,\label{A10-6'}\\
  \partial_\nu^Q u_1 & = 0  && \textnormal{ on }   (0,\infty)\times \Gamma,\label{A10-7'}\\
  u_1 |_{t=0} & = u_{1,0}  && \textnormal{ in } \Omega,\label{A10-8'}\\
  u_2 |_{t=0} & = u_{2,0} && \textnormal{ on } \Gamma\label{A10-9'}
\end{alignat}     
    possesses a unique solution, which is given by $\uu(t)=\Tt(t)(u_{1,0},u_{2,0})$ for $t>0$ where $\Tt(t)$ is the analytic semigroup generated by $-\Aa$.
    Furthermore we have \[D(\Aa)=\{\uu \in \Hh ~|~ u_1 \in H^{3/2}_B(\Omega), \alpha \Div Q \nabla u_1 \in H^{3/2}_B(\Omega), \tr u_1=u_2, \gamma^{3/2}_Q u_1=0\}.\]
\end{satz}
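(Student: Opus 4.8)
The plan is to obtain Theorem~\ref{Cor:A1} as a direct specialization of the abstract Theorem~\ref{GenOP2} to the concrete second-order operator $B = -\Div Q\nabla$ treated in Section~\ref{SecA1}. First I would verify that we are genuinely in the situation of Definition~\ref{DefForm}: by Lemma~\ref{Lem:Sett} the subsidiary form $\b(u,v)=\<Q\nabla u,\nabla v\>_\Omega$ on $H^1(\Omega)$ is admissible and $C_c^\infty(\Omega)\subseteq D(B_N)\cap D(B_D)$, so in particular $D(B_D)\cap D(B_N)$ is dense in $L^2(\Omega)$, which is the standing density assumption. Thus the primary form $\a$ from Definition~\ref{DefForm}~(ii) is densely defined by Proposition~\ref{Prop:Dense}, and Theorem~\ref{GenOP2}~(i) gives that $\a$ is generating and symmetric, so $\Aa$ is self-adjoint and $-\Aa$ generates an analytic semigroup $\Tt$ on $\Hh$. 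By Theorem~\ref{GenOP2}~(iii), for every $\uu_0\in\Hh$ the Cauchy problem~\eqref{A10-5''}--\eqref{A10-9''} has the unique solution $\uu(t)=\Tt(t)\uu_0$; and since $N^\b$ is surjective by Lemma~\ref{Lem:Nsur}, the last sentence of Theorem~\ref{GenOP2}~(iii) lets me replace $\NN^\b$ by $N^\b$ throughout.

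The next step is to translate the abstract system~\eqref{A10-5''}--\eqref{A10-9''} into the concrete PDE~\eqref{A10-5'}--\eqref{A10-9'}. Here I invoke Theorem~\ref{SatzA1}: part~(ii) identifies $B_0 = B_{\min}$ and $B_0^* = B_{\max}$, so the abstract operator $B_0^*(\alpha B_N)$ becomes $B_{\max}(\alpha\,(-\Div Q\nabla))$, which on its natural domain is exactly $B(\alpha B)$ in the divergence-form sense of Definition~\ref{DefBDist}; part~(iii) identifies $N^\b = -\partial_\nu^Q = -\gamma_Q^{3/2}$ with $D(N^\b) = D(\partial_\nu^Q) = H^{3/2}_B(\Omega)$. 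Substituting these identifications into~\eqref{A10-5''}--\eqref{A10-9''} turns $-\beta\NN^\b(\alpha B_N)u_1 = -\beta N^\b(\alpha B u_1)$ into $+\beta\,\partial_\nu^Q(\alpha B u_1)$, giving precisely the boundary condition~\eqref{A10-6'}; likewise $\NN^\b u_1 = 0$ becomes $\partial_\nu^Q u_1 = 0$, i.e.~\eqref{A10-7'}. Hence the unique solution of~\eqref{A10-5'}--\eqref{A10-9'} is $\uu(t) = \Tt(t)(u_{1,0},u_{2,0})$.

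For the domain characterization I start from the abstract description in Theorem~\ref{GenOP2}~(ii) (with $\NN^\b$ replaced by $N^\b$): $D(\Aa) = \{\uu\in\Hh \mid u_1\in D(B_N),\ \alpha B_N u_1\in D(N^\b),\ u_2 = \tr u_1\}$. Now I rewrite each condition using Sobolev regularity. By Theorem~\ref{SatzA1}~(iv) we have $D(B_N)\subseteq H^{3/2}_B(\Omega)$; conversely any $u_1\in H^{3/2}_B(\Omega)$ with $\gamma_Q^{3/2}u_1 = 0$ lies in $D(B_N)$, because $N^\b u_1 = -\gamma_Q^{3/2}u_1 = 0$ means $u_1\in\ker N^\b = D(B_N)$ by Theorem~\ref{GenOP1}~(iii) together with Theorem~\ref{SatzA1}~(iii). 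Therefore $u_1\in D(B_N)$ is equivalent to $u_1\in H^{3/2}_B(\Omega)$ with $\gamma_Q^{3/2}u_1 = 0$. Similarly, since $D(N^\b) = H^{3/2}_B(\Omega)$ and $B_N u_1 = -\Div Q\nabla u_1 = B u_1$ on $D(B_N)$, the condition $\alpha B_N u_1\in D(N^\b)$ reads $\alpha\Div Q\nabla u_1\in H^{3/2}_B(\Omega)$. Assembling these equivalences yields exactly
\[D(\Aa)=\{\uu\in\Hh \mid u_1\in H^{3/2}_B(\Omega),\ \alpha\Div Q\nabla u_1\in H^{3/2}_B(\Omega),\ \tr u_1 = u_2,\ \gamma_Q^{3/2}u_1 = 0\}.\]

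**Main obstacle.** The proof is essentially an assembly of Theorems~\ref{GenOP1}, \ref{GenOP2}, \ref{SatzA1} and Lemmas~\ref{Lem:Sett}, \ref{Lem:Nsur}, so there is no deep new argument; the one point requiring care is the bookkeeping of signs and of the precise function spaces when passing from $\NN^\b$/$N^\b$ to $\partial_\nu^Q = \gamma_Q^{3/2}$, in particular making sure that the condition ``$u_1\in D(B_N)$'' is faithfully captured by the pair ``$u_1\in H^{3/2}_B(\Omega)$ and $\gamma_Q^{3/2}u_1=0$'' rather than by a weaker or stronger statement — this rests on Theorem~\ref{SatzA1}~(iii)--(iv) and on $\ker N^\b = D(B_N)$ from Theorem~\ref{GenOP1}~(iii), and I would state this equivalence explicitly before substituting it into the abstract domain formula.
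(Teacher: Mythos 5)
Your proposal is correct and follows exactly the paper's route: the paper's proof is a one-line citation of Theorems~\ref{GenOP1}, \ref{GenOP2} and \ref{SatzA1} together with Lemmata~\ref{Lem:Sett} and \ref{Lem:Nsur}, and your write-up simply makes explicit the same assembly, including the key translation $D(B_N)=\ker N^\b=\{u\in H^{3/2}_B(\Omega)\,:\,\gamma_Q^{3/2}u=0\}$ and the sign bookkeeping for $\NN^\b=N^\b=-\partial_\nu^Q$.
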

\begin{proof}
This is a direct consequence of Theorems~\ref{GenOP1} and~\ref{GenOP2}, whose assumptions are validated by Lemmata~\ref{Lem:Sett} and \ref{Lem:Nsur}. The identification of the operators and characterization of the domain follow from Theorem~\ref{SatzA1}.
\end{proof}
\begin{bemerkung}\label{OrgSys2}
As in Remark~\ref{OrgSys}, we observe that $u_1$ also solves the corresponding non-decoupled problem with Wentzell boundary conditions
\begin{alignat}{4}
  \partial_t u   +B (\alpha B)u & =0 && \textnormal{ in }   (0,\infty)\times \Omega,\label{A10-1'}\\
 \tr B (\alpha B \nabla)u -\beta  \partial_\nu^Q(\alpha  B) u -\gamma \tr u   & = 0
   && \textnormal{ on }  (0,\infty)\times \Gamma,\label{A10-2'}\\
  \partial_\nu^Q u  & = 0  && \textnormal{ on }   (0,\infty)\times \Gamma,\label{A10-3'}\\
  u |_{t=0} & = u_0   && \textnormal{ in } \Omega.\label{A10-4'}
  \end{alignat}
\end{bemerkung}
 \index{Trace!Robin}
 Finally, we also want to add the case $\delta>0$. The main idea is to compare the machinery of the form $\b$ with that of $\b_\delta$ defined by \[\b_\delta(u,v)=\<Q \nabla u, \nabla v\>_\Omega+\<\delta u, v\>_\Gamma\] for $0\leq \delta \in L^\infty(\Omega)$ on $D(\b_\delta)=H^1(\Omega)$.
\begin{proposition}
   Under Hypotheses~\ref{hyp1.1} and \ref{HypQ}, the subsidiary form $\b_\delta$ is admissible. We denote its associated operator by $B_{N,\delta}$.
\end{proposition}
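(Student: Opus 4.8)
The strategy is to observe that $\b_\delta$ arises from the form $\b$ of Definition~\ref{DefSup} by adding the non-negative boundary term $(u,v)\mapsto\<\delta\,\tr u,\tr v\>_\Gamma$, which is bounded on $H^1(\Omega)$ thanks to continuity of the Dirichlet trace. Consequently $\b_\delta$ has the same form domain $D(\b_\delta)=H^1(\Omega)=D(\b)$ as $\b$, and the form norms $\|\cdot\|_{\b_\delta}$ and $\|\cdot\|_\b$ are equivalent; the verification of admissibility for $\b$ in Lemma~\ref{Lem:Sett} then transfers essentially verbatim, and one is left only with bookkeeping for the extra term.

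In detail, I would first record that $\b_\delta$ is symmetric, since $Q$ is real and symmetric and $\delta$ is real-valued, and densely defined, since $C_c^\infty(\Omega)\subseteq H^1(\Omega)=D(\b_\delta)$; in particular the left-hand inclusion in \eqref{TestDense} already holds. Next, using $\delta\geq 0$ (Hypothesis~\ref{hyp1.1}) together with the uniform positive definiteness \eqref{posdefA}, one has for $u\in H^1(\Omega)$
\[
\Re\b_\delta(u,u)=\<Q\nabla u,\nabla u\>_\Omega+\<\delta\,\tr u,\tr u\>_\Gamma\ \geq\ \kappa_Q\|\nabla u\|_\Omega^2\ \geq\ 0,
\]
so $\b_\delta$ is accretive, hence semi-bounded (with $\lambda_{\b_\delta}\geq 0$). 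For the associated norm $\|u\|_{\b_\delta}^2=\Re\b_\delta(u,u)+(1-\lambda_{\b_\delta})\|u\|_\Omega^2$ this lower bound gives $\|u\|_{\b_\delta}^2\gtrsim\|u\|_{H^1(\Omega)}^2$, while the boundedness of $Q$ and $\delta$ together with the continuity of $\tr\colon H^1(\Omega)\to H^{1/2}(\Gamma)\hookrightarrow L^2(\Gamma)$ (cf.\ \cite[Equation~(2.7)]{GM08}) yields $\|u\|_{\b_\delta}^2\lesssim\|u\|_{H^1(\Omega)}^2$. Thus $\|\cdot\|_{\b_\delta}$ is equivalent to $\|\cdot\|_{H^1(\Omega)}$ on $H^1(\Omega)$; since $H^1(\Omega)$ is complete, so is $(D(\b_\delta),\|\cdot\|_{\b_\delta})$, i.e.\ $\b_\delta$ is closed. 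Continuity in the sense of \cite[Chapter~1]{Ouh09} follows from Cauchy--Schwarz in $L^2(\Omega)$ and $L^2(\Gamma)$ together with the same bounds, so $\b_\delta$ is generating; being also symmetric, its associated operator $B_{N,\delta}$ is self-adjoint.

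It then remains to check the right-hand embedding in \eqref{TestDense}: choosing $\rho=\frac12$ one has $H^{1/2+\rho}(\Omega)=H^1(\Omega)$, and $(D(\b_\delta),\|\cdot\|_{\b_\delta})=H^1(\Omega)$ (with an equivalent norm) embeds into $H^1(\Omega)$ via the identity, continuously and densely. Hence $\b_\delta$ is admissible. I do not expect a genuine obstacle here; the only point requiring (routine) care is that the added boundary term is controlled by trace continuity into $L^2(\Gamma)$ and, being non-negative, inflates the form norm only by a bounded factor while leaving its lower bound governed by $\b$ — so the norm equivalence, and with it closedness, is immediate. One could alternatively phrase the middle step as a standard relatively-form-bounded perturbation argument, but the direct comparison with $\b$ is shorter.
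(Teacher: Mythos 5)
Your proof is correct and follows the same route as the paper, which simply notes that the computations of Lemma~\ref{Lem:Sett} carry over because $0\le\delta\in L^\infty(\Gamma)$ and the norm $\|\cdot\|_{\b_\delta}$ remains equivalent to the full $H^1(\Omega)$-norm by continuity of the trace $H^1(\Omega)\to L^2(\Gamma)$. Your write-up just spells out the details (symmetry, accretivity, closedness via norm equivalence, and the choice $\rho=\tfrac12$ in \eqref{TestDense}) that the paper leaves implicit.
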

\begin{proof}
As $0 \leq \delta \in L^\infty(\Gamma)$, the calculations are similar to the proof of Lemma~\ref{Lem:Sett}. Note that the norm $\|\cdot\|_{\b_\delta}$ is also equivalent to the full $H^1(\Omega)$-norm as the trace is continuous from $H^1(\Omega)$ to $L^2(\Gamma).$
    \end{proof}
Under the given smoothness conditions we have $C_c^\infty(\Omega)\subseteq D(B_{N,\delta})$ similar as in Lemma~\ref{Lem:Sett} due to the fact that $\b=\b_\delta$ for test functions. Naturally, one can consider the restriction of the form to $H_0^1(\Omega)$ once more, but there the form also coincides with the previous form $\b_D$, so the Dirichlet realization is independent of $\delta$. This also shows that also $D(B_{N,\delta})\cap D(B_{D,\delta})$ is dense in $L^2(\Omega)$ and Theorem~\ref{GenOP1} is applicable to $\b_\delta$, as well. The versions of all appearing operators $N^\b, \NN^\b, B_0,$ etc., associated to $\b_\delta$ will be denoted by $N^{\b_\delta}, \NN^{\b_\delta}, B_{0,\delta},$ etc.

\begin{satz}\label{Thm:Robin}
In the above setting we have the following results.
\begin{enumerate}[\upshape(i)]
    \item $N^{\b_\delta}$ is surjective.
    \item $B_{0,\delta}=B_0$,  and $(B_{0,\delta})^*=B_{\max,\delta}=B_{\max}=B_0^*=-\Div Q \nabla$ considered as $L^2$-realization of a map from $L^2(\Omega)$ to $H^{-2}(\Omega)$.
    \item $N^{\b_\delta}=-\gamma^{3/2}_Q-\delta \cdot \tr$,
     and the operator associated to $\a_\delta$ on $\Hh$ is given by  
 \begin{equation}  \label{Adelta}\Aa_\delta=\begin{pmatrix}
    \Div Q \nabla (\alpha \Div Q \nabla) & 0 \\ -\beta (\gamma^{3/2}_{Q}+\delta \tr)(\alpha \Div Q \nabla) & \gamma
\end{pmatrix} 
\end{equation} on \begin{align*} D(\Aa_\delta)=\{\uu \in \Hh ~|~ u_1 \in H^{3/2}_B(\Omega), &\alpha \Div Q \nabla u_1 \in H^{3/2}_B(\Omega),\\ & \tr u_1=u_2, 
 \gamma^{3/2}_Q u_1+\delta \tr u_1=0\}.\end{align*}
\end{enumerate}
\end{satz}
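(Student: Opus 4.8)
The strategy is to piggy-back on the $\delta=0$ theory already established: the forms $\b$ and $\b_\delta$ differ only by the boundary term $\langle\delta\tr u,\tr v\rangle_\Gamma$, which vanishes on $H^1_0(\Omega)$ and, on all of $H^1(\Omega)$, is bounded with values in $L^2(\Gamma)$. First I would prove (ii). Since $\b_\delta$ restricted to $\{u\in H^1(\Omega):\tr u=0\}=H^1_0(\Omega)$ coincides with $\b_D$, we get $B_{D,\delta}=B_D$. If $u\in D(B_0)=D(B_N)\cap D(B_D)$, then $\tr u=0$ and $\b(u,\cdot)=\langle B_0u,\cdot\rangle_\Omega$ on $H^1(\Omega)$; adding $\langle\delta\tr u,\tr v\rangle_\Gamma=0$ gives $\b_\delta(u,\cdot)=\langle B_0u,\cdot\rangle_\Omega$, so $u\in D(B_{N,\delta})$ and hence $u\in D(B_{0,\delta})$ with $B_{0,\delta}u=B_0u$; the reverse inclusion follows likewise (use $B_{D,\delta}=B_D$ to get $\tr u=0$, then the boundary term again drops), so $B_{0,\delta}=B_0$. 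Consequently $B_{0,\delta}^*=B_0^*=B_{\max}$ by Theorem~\ref{SatzA1}~(ii); and $B_{\max,\delta}=B_{\max}$ because Definition~\ref{DefBDist} does not see $\delta$ at all (equivalently, $B_{\min,\delta}=B_{\min}$, both being the closure of $-\Div Q\nabla$ on $C_c^\infty(\Omega)$, where $\b=\b_\delta$). This gives the whole chain in (ii).

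For the trace formula in (iii), note that $D(\b_\delta)=H^1(\Omega)=D(\b)$ and $B_{0,\delta}^*=B_{\max}$, so by Definition~\ref{DefN-NN} a function $u$ lies in $D(N^{\b_\delta})$ exactly when $u\in H^1_B(\Omega)$ and $\langle B_{\max}u,v\rangle_\Omega-\b_\delta(u,v)=\langle g,\tr v\rangle_\Gamma$ for all $v\in H^1(\Omega)$ and some $g\in L^2(\Gamma)$. Since $\b_\delta(u,v)=\b(u,v)+\langle\delta\tr u,\tr v\rangle_\Gamma$, this is equivalent to $\langle B_{\max}u,v\rangle_\Omega-\b(u,v)=\langle g+\delta\tr u,\tr v\rangle_\Gamma$; and because $\tr u\in H^{1/2}(\Gamma)\hookrightarrow L^2(\Gamma)$ and $\delta\in L^\infty(\Gamma)$, we have $g\in L^2(\Gamma)$ iff $g+\delta\tr u\in L^2(\Gamma)$. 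Hence $D(N^{\b_\delta})=D(N^\b)=H^{3/2}_B(\Omega)$ and $N^{\b_\delta}u=N^\b u-\delta\tr u=-\gamma_Q^{3/2}u-\delta\tr u$ by Theorem~\ref{SatzA1}~(iii).

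For (i) I would repeat the proof of Lemma~\ref{Lem:Nsur} with $\b_\delta$ in place of $\b$ (indeed \cite[Lemma~3.8]{Nit11} is already stated for Robin-type forms): given $g\in L^2(\Gamma)$, pick $\lambda>0$ so large that $\b_\delta+\lambda\langle\cdot,\cdot\rangle_\Omega$ is coercive on $H^1(\Omega)$ (possible since $Q$ is uniformly positive definite and $\delta\ge0$), solve $\b_\delta(u,v)+\lambda\langle u,v\rangle_\Omega=\langle g,\tr v\rangle_\Gamma$ for all $v\in H^1(\Omega)$ by Lax--Milgram, then test against $v\in D(B_{0,\delta})=H^2_0(\Omega)$ (where the boundary terms drop and integration by parts gives $\b_\delta(u,v)=\langle u,B_{0,\delta}v\rangle_\Omega$) to conclude $u\in D(B_{0,\delta}^*)$ with $B_{0,\delta}^*u=-\lambda u$, whence $u\in D(N^{\b_\delta})$ and $N^{\b_\delta}u=-g$. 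Since $N^{\b_\delta}$ is thus surjective, Theorem~\ref{GenOP1}~(iv) gives $\NN^{\b_\delta}=N^{\b_\delta}$, and Theorem~\ref{GenOP1}~(iii) together with (iii) yields $D(B_{N,\delta})=\ker N^{\b_\delta}=\{u\in H^{3/2}_B(\Omega):\gamma_Q^{3/2}u+\delta\tr u=0\}$, with $B_{N,\delta}u=-\Div Q\nabla u$ there.

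Finally, Definition~\ref{DefForm}, Proposition~\ref{Prop:Dense} and Theorem~\ref{GenOP2} all apply to the primary form $\a_\delta$ built from $\b_\delta$ (its base operator $B_{0,\delta}=B_0$ has domain $H^2_0(\Omega)\supseteq C_c^\infty(\Omega)$, which is dense in $L^2(\Omega)$), so the operator associated with $\a_\delta$ on $\Hh$ is
\[
\Aa_\delta=\begin{pmatrix} B_{0,\delta}^*(\alpha B_{N,\delta}) & 0\\ -\beta N^{\b_\delta}(\alpha B_{N,\delta}) & \gamma\end{pmatrix}
\]
on $\{\uu\in\Hh:u_1\in D(B_{N,\delta}),\ \alpha B_{N,\delta}u_1\in D(N^{\b_\delta}),\ u_2=\tr u_1\}$, where I have used surjectivity of $N^{\b_\delta}$ to replace $\NN^{\b_\delta}$ by $N^{\b_\delta}$. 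Substituting $B_{0,\delta}^*=B_{\max}=-\Div Q\nabla$ (as $L^2$-realization of the map $L^2(\Omega)\to H^{-2}(\Omega)$), $B_{N,\delta}u_1=-\Div Q\nabla u_1$, $N^{\b_\delta}=-\gamma_Q^{3/2}-\delta\tr$, $D(N^{\b_\delta})=H^{3/2}_B(\Omega)$, and the description of $D(B_{N,\delta})$ just obtained --- and tracking the signs, so that $B(\alpha B)=\Div Q\nabla(\alpha\Div Q\nabla)$ --- yields exactly \eqref{Adelta} on the claimed domain. The one genuinely non-routine point is the first step: one must check that the purely boundary perturbation in $\b_\delta$ leaves both the core operator $B_0$ (defined on $H^2_0(\Omega)$) and its adjoint $B_{\max}$ (the $H^{-2}$-distributional realization) unchanged; everything else is bookkeeping, provided the logical order (ii)$\Rightarrow$(iii)$\Rightarrow$description of $D(B_{N,\delta})$ is respected, since identifying $D(B_{N,\delta})=\ker N^{\b_\delta}$ relies on the trace formula of (iii).
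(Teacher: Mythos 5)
Your proposal is correct and follows essentially the same route as the paper: establish $B_{0,\delta}=B_0$ and $B_{\max,\delta}=B_{\max}$ by observing that the boundary perturbation vanishes on functions with zero trace, derive $N^{\b_\delta}=N^\b-\delta\tr$ by adding/subtracting $\langle\delta\tr u,\tr v\rangle_\Gamma$ in the defining Green identity, obtain surjectivity as in Lemma~\ref{Lem:Nsur} (the paper cites \cite[Lemma~3.7/3.8]{Nit11}, which covers the Robin case; your self-contained Lax--Milgram variant is equivalent), and then assemble $\Aa_\delta$ via Theorem~\ref{GenOP2} and Theorem~\ref{SatzA1}. The reordering (ii)$\to$(iii)$\to$(i) respects all logical dependencies, so no gap arises.
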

\begin{proof}~
\begin{proofenum}[~~\upshape(i)]
    \item The surjectivity of $N^{\b_\delta}$ follows from \cite[Lemma~3.7/3.8]{Nit11}, just as in the Neumann case, because the theory there also contains the Robin case (cf.\ \cite[Equation~(2.3)]{Nit11}). So $N^{\b_\delta}=\NN^{\b_\delta}$.
    \item For $u \in H_0^1$ and $f \in L^2(\Gamma)$ we have 
     $\<f,v\>_\Gamma=\<Q \nabla u,\nabla v\>_\Omega$ for all $v \in H^1(\Omega)$ if and only if we have $\<f,v\>_\Gamma=\<Q \nabla u,\nabla v\>_\Omega+\<\delta \tr u, \tr v\>_\Gamma$ for all $v \in H^1(\Omega)$ due to $\tr u=0$. Thus $D(B_N)\cap H_0^1=D(B_{N,\delta}) \cap H_0^1$ and the operators $B_N$ and $B_{N,\delta}$ coincide there, which shows $B_0=B_{0,\delta}$. Taking adjoints, this carries over to $B_0^*$. As $B_{0,\delta}=B_0=(-\Div Q \nabla)|_{H_0^2}$ we also have $\BB u=\BB_\delta u$, so also their $L^2$-realizations $B_{\max}$ and $B_{\max,\delta}$ must coincide.
     \item Due to the surjectivity shown in (i) we have $\NN^{\b_\delta}=N^{\b_\delta}$ by Theorem~\ref{GenOP1}~(iv). Next we show $N^{\b_\delta}=N^\b-\delta \tr u.$      Assume $u \in D(N^\b)$. Hence $u \in H^1(\Omega)$ and for all $v \in H^1(\Omega)$ we have $\<N^\b u,\tr v\>_\Gamma=\<B_0^* u,v\>_\Omega-\b(u,v).$ So, equivalently, $\<N^\b-\delta \tr u,  \tr v\>_\Gamma=\<(B_{0,\delta})^* u,v\>_\Omega-\b_\delta(u,v),$ which shows $D(N^\b)=D(N^{\b_\delta})$ and $N^{\b_\delta}=N^\b-\delta \tr=-\gamma_Q^{3/2}-\delta \tr$ by Theorem~\ref{SatzA1}~(iii). Furthermore, by Theorem~\ref{GenOP2}, the associated operator $\Aa_\delta$ is given by 
     \[\Aa_\delta=\begin{pmatrix}
    (B_{0,\delta})^* (\alpha B_{N,\delta}) & 0 \\ -\beta\NN^{\b_\delta} (\alpha B_{N,\delta}) & \gamma
\end{pmatrix} \] on $$D(\Aa_\delta)=\{\uu \in \Hh ~|~ u_1 \in D(B_{N,\delta}), \alpha B_{N,\delta} u_1 \in D(\NN^{\b_\delta}), u_2=\tr u_1\}.$$ Using (i), (ii) and Theorem~\ref{SatzA1}~(iii) and (iv) yields the result. Note that $\ker N^{\b_\delta}=D(B_{N,\delta})$ because of Theorem~\ref{GenOP1}~(iii) applied to $\b_\delta.$\qedhere
\end{proofenum}
\end{proof}
\begin{bemerkung}
After having verified $H^{3/2}_B(\Omega)$-regularity for the trace $N^\b$, it can be deduced that $(L^2(\Omega),\tr, N^{\b_\delta})$ actually is a quasi-boundary triple for the operator ${B_{\max}|}_{D(N^\b)}$ in the sense of \cite{BM14}, which generalizes the results from their Section 4.2 to Lipschitz domains. A detailed proof can be found in \cite[Section~3.4]{Plo24}.  
\end{bemerkung}  

We may finally collect the main result for our original system~\eqref{10-5}--\eqref{10-9}. The notations $N^\b$ and $\gamma_Q^{3/2}$ were useful in context with the general theory. In the following, however, we will write $\partial_\nu^Q,$ again, which is closer to classical notation. Note that due to our results we have $\partial_\nu^Q=\gamma_Q^{3/2}=-N^\b=-\NN^\b$, anyway.
\begin{satz}\label{Cor:A3}
    Assume Hypotheses~\ref{hyp1.1} and \ref{HypQ}. Write $B=-\Div Q \nabla$ and $\partial_\nu^Q$ as the unique extension of $\nu \cdot \tr Q \nabla$ to $H^{3/2}(\Omega)$. Then for $\uu_0=(u_{1,0},u_{2,0}) \in \Hh$ the Cauchy problem 
   \begin{alignat}{4}
  \partial_t u_1   +B(\alpha  B)u_1 & =0 && \textnormal{ in }   (0,\infty)\times \Omega,\label{3A10-5}\\
  \partial_t u_2 +\beta  \partial^Q_\nu (\alpha  B) u_1 +  \beta\delta \tr (\alpha B) u_1 + \gamma  u_2   & = 0
   && \textnormal{ on }  (0,\infty)\times \Gamma,\label{3A10-6}\\
  \partial_\nu^Q u_1+\delta u_2  & = 0  && \textnormal{ on }   (0,\infty)\times \Gamma,\label{3A10-7}\\
  u_1 |_{t=0} & = u_{1,0}  && \textnormal{ in } \Omega,\label{3A10-8}\\
  u_2 |_{t=0} & = u_{2,0} && \textnormal{ on } \Gamma\label{3A10-9}
\end{alignat}
     possesses a unique solution, which is given by $\uu(t)=\Tt(t)(u_{1,0},u_{2,0})$ where $\Tt(t)$ is the analytic semigroup generated by $-\Aa_\delta$ given as in~\eqref{Adelta}. For $t>0$, we have $u(t) \in D(\Aa_\delta^\infty)$, whence $u_1$ solves the original system with Wentzell boundary conditions given by
     \begin{alignat}{4}
  \partial_t u   +B(\alpha B)u & =0 && \textnormal{ in }   (0,\infty)\times \Omega,\label{3A10-1'}\\
 \tr B(\alpha  B)u -\beta  \partial_\nu^Q (\alpha  B) u -  \beta\delta \tr (\alpha B) u -\gamma \tr  u   & = 0
   && \textnormal{ on }  (0,\infty)\times \Gamma,\label{3A10-2'}\\
  \partial_\nu^Q u+ \delta \tr u   & = 0  && \textnormal{ on }   (0,\infty)\times \Gamma,\label{3A10-3'}\\
  u |_{t=0} & = u_0   && \textnormal{ in } \Omega.\label{3A10-4'}
\end{alignat}
\end{satz}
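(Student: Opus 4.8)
The plan is to obtain the theorem as a corollary of the abstract generation result Theorem~\ref{GenOP2} applied to the Robin form $\b_\delta$, combined with the concrete identifications of Theorem~\ref{Thm:Robin}, and then to pass to the non-decoupled Wentzell formulation exactly as in Remarks~\ref{OrgSys} and~\ref{OrgSys2}. First I would record that all hypotheses of the abstract theory hold for $\b_\delta$: it is admissible, the intersection $D(B_{N,\delta})\cap D(B_{D,\delta})$ is dense in $L^2(\Omega)$ since it contains $C_c^\infty(\Omega)$, and $N^{\b_\delta}$ is surjective by Theorem~\ref{Thm:Robin}~(i), so $N^{\b_\delta}=\NN^{\b_\delta}$. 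Hence Theorem~\ref{GenOP2} applies with $\b$ replaced by $\b_\delta$ and, thanks to the surjectivity, $\NN^{\b_\delta}$ replaced by $N^{\b_\delta}$: the operator $-\Aa_\delta$ generates an analytic semigroup $\Tt$ on $\Hh$, and the decoupled abstract Cauchy problem~\eqref{A10-5''}--\eqref{A10-9''} (for $\b_\delta$) has the unique solution $\uu(t)=\Tt(t)\uu_0$ for $t>0$.

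The second step is a translation of notation. Using Theorem~\ref{Thm:Robin}~(ii)--(iii) I would substitute $B_{0,\delta}^{*}=B_{\max}=-\Div Q\nabla$, observe that $B_{N,\delta}$ acts as $-\Div Q\nabla=B$ on its domain so that $B_{0,\delta}^{*}(\alpha B_{N,\delta})=B(\alpha B)$, and replace $\NN^{\b_\delta}=N^{\b_\delta}$ by $-\gamma_Q^{3/2}-\delta\tr=-\partial_\nu^Q-\delta\tr$ on $D(N^{\b_\delta})=H^{3/2}_B(\Omega)$. Under these substitutions~\eqref{A10-5''} is~\eqref{3A10-5}; the coupling equation~\eqref{A10-6''}, carrying the sign of $\NN^{\b_\delta}$, becomes $\partial_t u_2+\beta\partial_\nu^Q(\alpha B)u_1+\beta\delta\tr(\alpha B)u_1+\gamma u_2=0$, i.e.~\eqref{3A10-6}; and the condition $u_1\in D(B_{N,\delta})$ hidden in~\eqref{A10-7''} reads $\gamma_Q^{3/2}u_1+\delta\tr u_1=0$, which together with $u_2=\tr u_1$ from $D(\Aa_\delta)$ is $\partial_\nu^Q u_1+\delta u_2=0$, i.e.~\eqref{3A10-7}; the initial data~\eqref{A10-8''}--\eqref{A10-9''} are~\eqref{3A10-8}--\eqref{3A10-9}. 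This proves the first assertion of the theorem, with $\Aa_\delta$ as in~\eqref{Adelta}.

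For the non-decoupled system I would invoke analyticity: for $t>0$ we have $\uu(t)\in D(\Aa_\delta^\infty)\subseteq D(\Aa_\delta^2)$, hence $\Aa_\delta\uu(t)\in D(\Aa_\delta)$. Spelling out the coupling condition for the element $\Aa_\delta\uu(t)$, exactly as in Remark~\ref{OrgSys2}, yields
\[
\tr B(\alpha B)u_1=\bigl(\Aa_\delta\uu\bigr)_2=\beta\partial_\nu^Q(\alpha B)u_1+\beta\delta\tr(\alpha B)u_1+\gamma\tr u_1,
\]
which is~\eqref{3A10-2'}; the Robin condition encoded in $D(\Aa_\delta)$ is~\eqref{3A10-3'}; Equation~\eqref{3A10-1'} is~\eqref{3A10-5}; and~\eqref{3A10-4'} is to be read in the sense of Remark~\ref{r.explain}, i.e.\ $u_1(0)=u_{1,0}=u_0$ whenever $u_0$ is smooth enough to have a trace and $u_{2,0}=u_0|_\Gamma$ is chosen. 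I expect no genuine obstacle: all the analytic substance --- admissibility of $\b_\delta$, the density, surjectivity of $N^{\b_\delta}$, and the identifications $B_{0,\delta}^{*}=B_{\max}$, $N^{\b_\delta}=-\partial_\nu^Q-\delta\tr$, $D(N^{\b_\delta})=H^{3/2}_B(\Omega)$ --- has already been established in Theorem~\ref{Thm:Robin} and the propositions preceding it. The only points demanding care are keeping the signs straight when passing from $\NN^{\b_\delta}$ (which generalises $-\partial_\nu$) to classical notation, and the interpretation of the initial condition~\eqref{3A10-4'}, which is a matter of recalling Remark~\ref{r.explain} rather than a real difficulty.
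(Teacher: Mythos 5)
Your proposal is correct and follows essentially the same route the paper intends: the paper states Theorem~\ref{Cor:A3} without a separate proof, treating it precisely as the combination of Theorem~\ref{GenOP2} applied to $\b_\delta$, the identifications in Theorem~\ref{Thm:Robin}, and the analyticity argument of Remarks~\ref{OrgSys}--\ref{OrgSys2} that you reproduce. Your sign bookkeeping for $\NN^{\b_\delta}=-\partial_\nu^Q-\delta\tr$ and the reading of the initial condition via Remark~\ref{r.explain} are both consistent with the paper.
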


\section{Further properties of the solution in the fourth-order case} \label{Prop}
In this section, we present results concerning regularity and long-time behavior of our solution. We begin with a regularity result in a smoother situation. Then we return to our situation with Lipschitz domains and rougher coefficients.

\subsection{Higher regularity for smoother cases} \label{Smooth3.5}

Even with smooth coefficients and boundary, we cannot expect that for $\uu\in D(\mathcal A)$ the first component $u_1$ belongs to $H^4(\Omega)$. However, using the theory of \cite{DPRS22}, we can deduce $u_1\in H^{7/2}(\Omega).$   
Recall that any solution of the system~\eqref{10-5}--\eqref{10-9} satisfies $\uu \in D(\Aa)$, which shows $B (\alpha B)u_1 \in L^2(\Omega)$, $\beta \partial_\nu^Q (\alpha B)u_1 \in L^2$, and $\partial_\nu^Q u_1+\delta \tr u_1 =0.$ Thus the first component of any solution of~\eqref{10-5}--\eqref{10-9} in particular satisfies

\begin{alignat}{4}
  \lambda u_1  +B(\alpha  B)u_1 & = f\coloneqq \lambda u_1  +B(\alpha  B)u_1 && \textnormal{ in }   (0,\infty)\times \Omega,\label{11-5'}\\
    -\beta \partial_\nu^Q (\alpha  B) u_1  & = g\coloneqq - \beta \partial_\nu^Q (\alpha  B) u_1
   && \textnormal{ on }  (0,\infty)\times \Gamma,\label{11-6'}\\
  \partial_\nu^Q u_1 +\delta \tr u_1  & = 0  && \textnormal{ on }   (0,\infty)\times \Gamma\label{11-7'}
 \end{alignat} with $(f,g) \in \Hh=L^2(\Omega) \times L^2(\Gamma).$
We prove that the assumptions of \cite[Corollary~4.10]{DPRS22} are satisfied for this system, where $\tau=m_j+1/p$ and thus $\lceil|r'|\rceil=\lfloor|k_1'|\rfloor+1=1,\lfloor|k_2'|\rfloor+1=3.$ To that end, we assume $Q \in \BUC^4(\Omega,\R^{\dm \times \dm})$, $\alpha \in \BUC^3(\Omega)$, $\beta \in \BUC^1(\Gamma)$, and $\delta \in \BUC^3(\Gamma)$ to ensure $a_\alpha \in \BUC^1(\Omega)$, $b_{1\beta} \in \BUC^1(\Gamma)$, and $b_{2\beta} \in \BUC^3(\Gamma)$, as needed. Furthermore, we assume $\Omega$ to have $C^6$-boundary.
The last assumption to check is that the system is parameter-elliptic, for which we briefly recall the definition. Therein, we use the standard form $\binom{\lambda+A}{B}$ for parameter-elliptic boundary value problems for a moment, which is not to be confused with our operators $\Aa$ and $B$.

\begin{definition}\label{BVP}
Let $\Lambda\subseteq \C$ be a closed sector in the complex plane with vertex at the origin. Using the standard convention  $D:=-i\nabla$ for parabolic boundary value problems, let $A$ and $B=(B_1,...,B_m)$ be formally given by
\[A(x,D):=\sum_{|\alpha|\leq 2m}a_\alpha(x)D^\alpha \text{~~and~~} B_j(x,D):=\sum_{|\beta|\leq m_j}b_\beta(x) \tr D^\beta ~~(j=1,\ldots,m), \] where $m_j<2m$ \begin{proofenum}[~~\upshape(i)]
    \item We define the \textit{principal symbols} of $A$ and $B_j$ as \[a_0(x,\xi) \coloneqq \sum_{|\alpha|=2m} a_\alpha(x)  \xi^\alpha \text{~~and~~} b_{0,j}(x,\xi) \coloneqq \sum_{|\beta|=m_j} b_{j\beta}(x)\xi^\beta ~~(j=1,\ldots,m),\] respectively.    
    \item We call the family $\lambda-A(x,D)$ \textit{parameter-elliptic in $\Lambda$} if the principal symbol $a_0(x,\xi)$
satisfies
\begin{equation}\label{2-5}
 |\lambda-a_0(x,\xi) | \ge C \big( |\lambda|+ |\xi|^{2m}\big)
\quad (x\in\overline \Omega,\, \lambda\in \Lambda,\, \xi\in\R^\dm,\, (\xi,\lambda)\not=0)
\end{equation}
for some constant $C>0$.
\item The boundary value problem  $\binom{\lambda+A}{B}$ is called \textit{parameter-elliptic in $\Lambda$}
if $\lambda-A(x,D)$ is parameter-elliptic in $\Lambda$, and the following \emph{Shapiro--Lopatinskii condition} holds: \index{Shapiro--Lopatinskii}

Let $x_0\in\partial\Omega$ be an arbitrary point of the boundary; rewrite the boundary value problem $(\lambda-a_0(x_0,D),$ $b_{0,1}(x_0,D),\dots, $ $b_{0,m}(x_0,D))$ in the coordinate system associated with $x_0$ obtained from the original one by a rotation after which the positive $x_\dm$-axis has the direction of the interior normal to $\partial \Omega$ at~$x_0$. Then, for all $\xi'\in\R^{\dm-1}$ and $\lambda\in\Lambda$ with $(\xi',\lambda)\not=0$, the trivial solution $w=0$ is the only stable solution of the ordinary differential equation on the half-line
\begin{align*}
   ( \lambda -  a_0(x_0,\xi', D_\dm ))  w(x_\dm) & = 0 \quad (x_\dm\in (0,\infty)),\\
  b_{0,j}(x_0,\xi',D_\dm) w(0) & = 0 \quad (j=1,\dots,m).
\end{align*}
 
\end{proofenum}
\end{definition}
 
 \begin{lemma}\label{Lem:ParEll}
 The system  $\trinom{\lambda+B \alpha B}{-\beta \partial_\nu^Q (\alpha B)}{\partial_\nu^Q+\delta \tr}$ is parameter-elliptic in $\overline{\Sigma_\theta}$ for $\theta \in (0,\pi).$
 \end{lemma}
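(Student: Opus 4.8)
The plan is to verify the two conditions in Definition~\ref{BVP}~(iii) for the concrete fourth-order system. First I would compute the principal symbol of the interior operator $B\alpha B = (-\Div Q\nabla)\alpha(-\Div Q\nabla)$. Writing $q(x,\xi) = \langle Q(x)\xi,\xi\rangle$ for the (scalar, real) principal symbol of $-\Div Q\nabla$, the principal symbol of $B\alpha B$ is $a_0(x,\xi) = \alpha(x) q(x,\xi)^2$. By uniform positive definiteness of $Q$ (Hypothesis~\ref{HypQ}) and $\alpha \geq \eta$ (Hypothesis~\ref{hyp1.1}) we have $a_0(x,\xi) \geq \eta\,\kappa_Q^2 |\xi|^4$, so $a_0$ takes values in $[\eta\kappa_Q^2|\xi|^4, \infty) \subseteq (0,\infty)$. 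For $\lambda$ in the closed sector $\overline{\Sigma_\theta}$ with $\theta < \pi$, the point $\lambda$ stays at a fixed positive angular distance from the negative real axis, hence from $a_0(x,\xi) \geq 0$; a standard geometric estimate then gives $|\lambda - a_0(x,\xi)| \geq c(\theta)(|\lambda| + |\xi|^4)$, which is parameter-ellipticity \eqref{2-5} with $2m = 4$.

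Next I would check the Shapiro--Lopatinskii condition at an arbitrary boundary point $x_0$. After rotating so that the inward normal is the positive $x_d$-axis, I freeze coefficients at $x_0$, set $Q_0 := Q(x_0)$, $\alpha_0 := \alpha(x_0) > 0$, and split $\xi = (\xi', \xi_d)$. The frozen interior symbol is $\alpha_0\, q_0(\xi', \xi_d)^2$ with $q_0(\xi',\xi_d) = \langle Q_0(\xi',\xi_d), (\xi',\xi_d)\rangle$, a positive-definite real quadratic form in $\xi_d$; the ODE is $(\lambda - \alpha_0 q_0(\xi', D_d)^2) w = 0$ on $(0,\infty)$. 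For fixed $\xi' \in \R^{d-1}$ and $\lambda \in \overline{\Sigma_\theta}$ with $(\xi',\lambda)\neq 0$, the characteristic polynomial $\lambda - \alpha_0 q_0(\xi', z)^2$ has no real roots (as $\alpha_0 q_0(\xi',z)^2 \geq 0$ and $\lambda \notin (-\infty,0]$ unless $\lambda = 0$, in which case $q_0(\xi',z)^2 = 0$ forces $\xi' = 0$ too, excluded), and by the parameter-ellipticity estimate it has exactly two roots $z_1(\xi',\lambda), z_2(\xi',\lambda)$ in the upper half-plane (counted with multiplicity), giving a two-dimensional stable subspace $\mathrm{span}\{e^{iz_1 x_d}, e^{iz_2 x_d}\}$ (or the confluent variant if $z_1 = z_2$). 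Since the boundary operator has $m = 2$ components $B_1 = -\beta\,\partial_\nu^Q(\alpha B)$ of order $m_1 = 3$ and $B_2 = \partial_\nu^Q + \delta\tr$ of order $m_2 = 1$, I must show that imposing $b_{0,1}(x_0,\xi',D_d)w(0) = 0$ and $b_{0,2}(x_0,\xi',D_d)w(0) = 0$ forces the coefficients of both stable exponentials to vanish.

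The frozen principal boundary symbols are $b_{0,1}(x_0,\xi',\xi_d) = -\beta_0\,\alpha_0\, n(\xi',\xi_d)\, q_0(\xi',\xi_d)$ and $b_{0,2}(x_0,\xi',\xi_d) = n(\xi',\xi_d)$, where $n(\xi',\xi_d) = \langle e_d, Q_0(\xi',\xi_d)\rangle$ is the principal symbol of $\partial_\nu^Q$ (linear in $\xi_d$ with leading coefficient $(Q_0)_{dd} > 0$) and $\beta_0 = \beta(x_0) \geq \eta > 0$; note $\delta\tr$ is lower order and drops out of $b_{0,2}$, while the interior factor $\alpha B$ contributes $\alpha_0 q_0$ to $b_{0,1}$. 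Thus $b_{0,1} = \beta_0\alpha_0 q_0(\xi',\xi_d) \cdot b_{0,2}$ up to sign — but this does not make them linearly dependent as operators on the stable solution space, because $q_0(\xi', D_d)$ acts nontrivially. Concretely, write $w(x_d) = c_1 e^{iz_1 x_d} + c_2 e^{iz_2 x_d}$; the two conditions become the linear system $\sum_j c_j\, n(\xi', z_j) = 0$ and $\sum_j c_j\, \beta_0\alpha_0\, n(\xi', z_j)\, q_0(\xi', z_j) = 0$, whose determinant is $\beta_0\alpha_0\, n(\xi',z_1) n(\xi',z_2)\,(q_0(\xi',z_2) - q_0(\xi',z_1))$. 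This is nonzero precisely when $n(\xi',z_j) \neq 0$ for $j=1,2$ and $q_0(\xi',z_1) \neq q_0(\xi',z_2)$. The main obstacle is exactly this nondegeneracy: I must rule out $n(\xi',z_j) = 0$ and the coincidence $q_0(\xi',z_1) = q_0(\xi',z_2)$ (and separately handle the confluent case $z_1 = z_2$, where $w$ has a polynomial factor and one checks a $2\times 2$ system involving derivatives). For $n(\xi',z_j) = 0$: since $n(\xi',\cdot)$ is affine with nonzero leading term, its unique root $\zeta := -\langle e_d, Q_0\xi'\rangle / (Q_0)_{dd}$ is real, whereas $z_j$ lies strictly in the upper half-plane, so $n(\xi',z_j)\neq 0$. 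For $q_0(\xi',z_1) = q_0(\xi',z_2) =: \mu$: then $z_1, z_2$ are the two upper-half-plane roots of $\alpha_0\mu^2 = \lambda$, forcing $\mu = \pm\sqrt{\lambda/\alpha_0}$ fixed, and then $z_1, z_2$ are both roots of the single real quadratic $q_0(\xi', z) - \mu = 0$; but a real quadratic in $z$ with positive leading coefficient has its two roots either both real or complex-conjugate, so it cannot have two (distinct) roots both in the open upper half-plane — contradiction unless $z_1 = z_2$, the confluent case. Assembling these observations yields that the trivial solution is the only stable solution, completing the verification of Shapiro--Lopatinskii and hence of parameter-ellipticity in $\overline{\Sigma_\theta}$.
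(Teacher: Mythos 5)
Your treatment of interior parameter-ellipticity matches the paper's: both reduce to $a_0(x,\xi)=\alpha(x)q(x,\xi)^2\geq \eta\kappa_Q^2|\xi|^4>0$ plus the standard sector estimate. For the Shapiro--Lopatinskii condition, however, you take a genuinely different route. The paper never computes roots: it multiplies the half-line ODE by $w$ in $L^2(0,\infty)$, integrates by parts twice, uses the two boundary conditions to cancel the two resulting boundary terms, and arrives at $0=\lambda\|w\|^2+\|\sqrt{\alpha(x_0)}\,q_0(\xi',D_{\dm})w\|^2$; this gives $w=0$ for $\lambda\neq0$, while for $\lambda=0$ a second energy identity and the uniform positive definiteness of $Q$ finish the job. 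That argument exploits the self-adjoint divergence-form structure and treats all $(\lambda,\xi')$ -- including multiple roots -- uniformly. Yours is the classical Lopatinskii-determinant computation: identify the two-dimensional stable subspace, reduce to a $2\times2$ system, and show the determinant $n(\xi',z_1)n(\xi',z_2)\bigl(q_0(\xi',z_2)-q_0(\xi',z_1)\bigr)$ is nonzero. It is more explicit about where degeneracy could occur, at the price of a case analysis; both are legitimate.

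Two points need attention before your version is complete. First, the confluent case $z_1=z_2$ is \emph{not} vacuous: it occurs exactly for $\lambda=0$, $\xi'\neq0$, where $q_0(\xi',\cdot)^2$ has a double root $z_+$ in the open upper half-plane (the quadratic $q_0(\xi',\cdot)$ has no real roots because $q_0>0$ on $\R^{\dm}\setminus\{0\}$). You defer this to ``a $2\times2$ system involving derivatives'' without carrying it out. It does work: for $w=(c_1+c_2x_{\dm})e^{iz_+x_{\dm}}$ one computes $q_0(\xi',D_{\dm})w=-ic_2\,(\partial_zq_0)(\xi',z_+)\,e^{iz_+x_{\dm}}$ with $(\partial_zq_0)(\xi',z_+)\neq0$ since the root is simple, so the third-order condition forces $c_2=0$ and the first-order condition then forces $c_1=0$. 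As written, though, this is the one configuration your determinant does not cover, and it must be included. Second, your exclusion of $q_0(\xi',z_1)=q_0(\xi',z_2)=\mu$ appeals to ``a real quadratic has roots either both real or complex-conjugate''; for $\lambda\notin\R$ the value $\mu$ is not real and $q_0(\xi',\cdot)-\mu$ is not a real quadratic. The conclusion survives because only the constant term is complex: the sum of the two roots equals $-b/a\in\R$, so their imaginary parts cancel and they cannot both lie in the open upper half-plane. State it that way and the argument is sound.
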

    \begin{proof}
     The parameter-ellipticity for the family $\lambda+B\alpha B$ is simple as we have 
    
    \begin{align*}
     a_0(x,\xi)\coloneqq{\symb}_0[B\alpha B](x,\xi)&=
      \sum_{j,k}(\xi_j q_{jk}(x) \xi_k)\alpha(x)\sum_{j',k'}(\xi_{j'} q_{j'k'}(x) \xi_{k'})>0
     \end{align*} 
     for all $|\xi|\neq 0$ as the matrix $Q=(q_{jk})_{jk}$ is symmetric and uniformly positive definite. Note for the first line that all the terms where the derivative hits the coefficient are of lower order. Hence, we can exploit homogeneity and boundedness of the domain, which shows parameter-ellipticity in any closed sector that does not contain the negative real line.
     
     A similar calculation shows
     \begin{align*}
      a'(x,\xi)\coloneqq{\symb}_0[B](x,\xi)&=\sum_{j',k'}\xi_{j'} q_{j'k'}(x) \xi_{k'},\\
     b_{0,1}(x,\xi)\coloneqq{\symb}_0[-\beta\partial_\nu^Q(\alpha B)](x,\xi)
     &=i\beta(x)\sum_{k}q_{\dm k}\xi_k \alpha(x)a'(x,\xi),\\
     b_{0,2}(x,\xi)\coloneqq{\symb}_0[\partial_\nu^Q](x,\xi)&=-i\sum_{k}q_{\dm k}(x)\xi_k.   
     \end{align*}
     In order to verify the Shapiro--Lopatinskii condition, we need to show that the ODE below only has the trivial solution. For this, the operators are locally transformed into the half-space at every fixed point $x_0 \in \Gamma$. It is a known fact that this coordinate transformation leaves the coefficients of the main symbol invariant (cf.\ \cite[Satz~10.3]{Wloka87}). Furthermore, we would like to note that, as we only consider a fixed $x_0$, the coefficients commute with all derivatives and we can simply pass from divergence to non-divergence form, which shows we may investigate the system~\eqref{SLcond} below. We write $\Xi=\binom{\xi'}{-i\partial_\dm},$ where $\xi=(\xi',\xi_\dm) \in \R^\dm$ as usual, and interpret the first components as multiplication, so $\Xi w=(\xi_1 w,\ldots,\xi_{\dm-1}w,-i \partial_\dm w)$. Then, for $x_\dm \in (0, \infty)$, and $\lambda \in \overline{\Sigma_\theta}$, $\xi' \in \R^{\dm-1}$ satisfying $(\lambda,\xi')\neq 0$, we assume
     \begin{equation}\label{SLcond}
     \begin{aligned}
         (\lambda+a_0(x_0,\Xi))w(x_\dm)&=0,\\
         b_{0,1}(x_0,\Xi)w(0)&=0,\\
         b_{0,2}(x_0,\Xi)w(0)&=0.
     \end{aligned}
     \end{equation}
     Note that integration by parts yields
     \begin{align*} \<\sum_j \Xi_j u,v\>_{L^2(0,\infty)}
     &=\<u,\sum_j \Xi_j v\>_{L^2(0,\infty)}-iu_\dm(0)\cdot \overline{ v_\dm(0)}. 
     \end{align*}
     Multiplying the first line with $w$ in $L^2((0, \infty))$ and using integration by parts and \eqref{SLcond}, we obtain
     \begin{align*}
         0&= |\lambda| \|w\|_{L^2(0,\infty)}^2+\sum_{j,k}\<\Xi_j q_{jk}(x_0)\Xi_k \alpha(x_0) a'(x_0,\Xi)w,w\>_{L^2(0,\infty)}\\
         &=|\lambda| \|w\|_{L^2(0,\infty)}^2+\sum_{j.k}\< q_{jk}(x_0)\Xi_k \alpha(x_0) a'(x_0,\Xi)w,\Xi_j w\>_{L^2(0,\infty)}\\
         &\hphantom{~~~~~~~~~~~~~~~~~~~~~~~}-\beta^{-1}(x_0)b_{0,1}(x_0,\Xi)w(0) \cdot \overline{w(0)}\\
         &=|\lambda| \|w\|_{L^2(0,\infty)}^2+\sum_{j,k}\< \Xi_k \alpha(x_0) a'(x_0,\Xi)w,q_{jk}(x_0)\Xi_j w\>_{L^2(0,\infty)}\\
         &=|\lambda| \|w\|_{L^2(0,\infty)}^2+\< \alpha(x_0) a'(x_0,\Xi)w,\sum_{k,j}\Xi_j q_{kj}(x_0)\Xi_k w\>_{L^2(0,\infty)}\\
         &\hphantom{~~~~~~~~~~~~~~~~~~~~~~~}-\alpha(x_0)a'(x_0,\Xi)w(0)\cdot \overline{b_{0,2}(x_0,\Xi)w(0)}\\
         &=|\lambda| \|w\|_{L^2(0,\infty)}^2+\|\sqrt{\alpha(x_0)}a'(x_0,\Xi)w\|_{L^2(0,\infty)}^2.
     \end{align*} 
     \normalsize Note that we used that the matrix $Q=(q_{jk})$ is symmetric and real-valued and that $\alpha>0, \beta>0$.
     If $\lambda \neq 0$, this implies that $w=0$ as desired, since $\overline{\Sigma_\theta} \subseteq \C \setminus(-\infty,0).$
     
     If $\lambda=0$, and thus by assumption $\xi' \neq 0$, we have 
     $a'(x_0,\Xi)w=0.$

     Multiplying with $w$ in $L^2(0,\infty)$ once more, we obtain
     \begin{align*}
     0&=\sum_{{j'},{k'}}\<\Xi_{j'} q_{{j'}{k'}}(x_0) \Xi_{k'} w,w\>_{L^2(0,\infty)}\\
     &=\sum_{{j'},{k'}}\<q_{{j'}{k'}}(x_0) \Xi_{k'} w, \Xi_{j'} w\>_{L^2(0,\infty)}+b_{0,2}(x_0,\Xi) w(0)\cdot \overline{w(0)}\\&=\sum_{{j'},{k'}}\<q_{{j'}{k'}}(x_0) \Xi_{k'} w, \Xi_{j'} w\>_{L^2(0,\infty)}\\
     &= \int_0^\infty \<Q (\Xi w)(x_\dm),(\Xi w)(x_\dm)\>_{\C^\dm} \mathrm{d}x_\dm     \geq \kappa_Q \int_0^\infty |(\Xi w)(x_\dm)|^2 \mathrm{d}x_\dm\\
     &=\kappa_Q (|\xi'|^2\|w\|_{L^2(0,\infty)}^2+\|\partial_\dm w\|_{L^2(0,\infty)}^2).
\end{align*} 
     In the last step we used that $Q$ is uniformly positive definite (cf.\ \eqref{posdefA}). As $\xi' \neq 0$, we also have $w=0$ in this case. Hence altogether the system is parameter-elliptic in every sector smaller than $\pi$.
 \end{proof}
   This shows $H^{7/2}$-regularity of any solution: 
 \begin{korollar}\label{Smooth7_2} Let $\Omega$ be a bounded domain with $C^6$-boundary. 
 
 Let $Q \in \BUC^4(\Omega,\R^{\dm \times \dm})$, $\alpha \in \BUC^3(\Omega)$, $\beta \in \BUC^1(\Omega)$, $\delta \in \BUC^3(\Omega)$. Then, we have 
\[ D(\mathcal A)=\{\uu \in \Hh \,|\, u_1 \in H^{7/2}(\Omega), \,B(\alpha B) u_1 \in L^2(\Omega),\, \partial_\nu^Q+\delta \tr u_1=0,\,  u_2=\tr u_1 \}.\]
     
 \end{korollar}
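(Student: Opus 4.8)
The plan is to prove the two inclusions separately. Recall from Theorem~\ref{Thm:Robin}~(iii) that $D(\Aa)=D(\Aa_\delta)$ is already described in terms of $H^{3/2}_B(\Omega)$-regularity of $u_1$ and of $\alpha\Div Q\nabla u_1$, together with the coupling $u_2=\tr u_1$ and the Robin condition $\gamma^{3/2}_Qu_1+\delta\tr u_1=0$. For the inclusion ``$\supseteq$'' I would take $\uu$ from the right-hand side of the Corollary and verify it lies in this set: if $u_1\in H^{7/2}(\Omega)$ and $Q\in\BUC^4$, then $Bu_1=-\Div Q\nabla u_1\in H^{3/2}(\Omega)\hookrightarrow L^2(\Omega)$, so $u_1\in H^{3/2}_B(\Omega)$; since $\alpha\in\BUC^3$, the product $\alpha Bu_1$ stays in $H^{3/2}(\Omega)$, and $B(\alpha B)u_1\in L^2(\Omega)$ is assumed, so $\alpha Bu_1\in H^{3/2}_B(\Omega)$; finally $\gamma^{3/2}_Qu_1=\partial_\nu^Qu_1$ on $H^{3/2}_B(\Omega)$ by Theorem~\ref{SatzA1}~(iii), and the remaining conditions are built into the candidate domain. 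This part is routine, using only multiplier estimates for $\BUC^k$-coefficients on fractional Sobolev spaces.

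For the nontrivial inclusion ``$\subseteq$'', let $\uu\in D(\Aa_\delta)$. Then $\alpha Bu_1\in H^{3/2}_B(\Omega)\subset D(B_{\max})$ immediately gives $B(\alpha B)u_1\in L^2(\Omega)$, so the only point is to upgrade the a priori regularity $u_1\in H^{3/2}(\Omega)$ to $u_1\in H^{7/2}(\Omega)$. By the domain description, $u_1$ solves the boundary value problem~\eqref{11-5'}--\eqref{11-7'} with data $(f,g)=(\lambda u_1+B(\alpha B)u_1,\,-\beta\partial_\nu^Q(\alpha B)u_1)\in L^2(\Omega)\times L^2(\Gamma)$ and homogeneous second boundary condition. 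By Lemma~\ref{Lem:ParEll} this system is parameter-elliptic in $\overline{\Sigma_\theta}$, and under the present hypotheses ($Q\in\BUC^4$, $\alpha\in\BUC^3$, $\beta\in\BUC^1$, $\delta\in\BUC^3$, $\partial\Omega\in C^6$) the coefficients and the domain satisfy the requirements of \cite[Corollary~4.10]{DPRS22}, as recorded in the discussion before Definition~\ref{BVP}; here one chooses the smoothness parameter so that the boundary data lies in $L^2(\Gamma)$ (i.e.\ $\tau=m_j+\tfrac12$), which pins the solution space to $H^{7/2}(\Omega)$. Thus, for $\lambda$ in the sector with $|\lambda|$ large, \cite[Corollary~4.10]{DPRS22} provides a (unique) $H^{7/2}(\Omega)$-solution of this problem.

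It remains to identify that $H^{7/2}$-solution with our $u_1$. I would fix $\lambda>\|\gamma\|_\infty$, so that $\lambda+\Aa_\delta$ is injective, $\Aa_\delta$ being self-adjoint and bounded below by $-\|\gamma\|_\infty$ (cf.\ Theorem~\ref{GenOP2}~(i) together with Theorem~\ref{Thm:Robin}). Let $\widetilde{\Aa}_\delta$ denote the operator~\eqref{Adelta} on the candidate domain of the Corollary; by ``$\supseteq$'' we have $\widetilde{\Aa}_\delta\subseteq\Aa_\delta$. For $\ff=(f_1,f_2)\in\Hh$, inserting $u_2=\tr u_1$ turns $(\lambda+\widetilde{\Aa}_\delta)\uu=\ff$ into a boundary value problem for $u_1$ alone whose principal part coincides with the one of Lemma~\ref{Lem:ParEll} (the additional terms $(\lambda+\gamma)\tr u_1$ and $-\beta\delta\tr(\alpha Bu_1)$ are of order $<3$ and affect neither parameter-ellipticity nor the leading coefficient); applying \cite[Corollary~4.10]{DPRS22} once more yields a solution $u_1\in H^{7/2}(\Omega)$, hence $\lambda+\widetilde{\Aa}_\delta$ is surjective onto $\Hh$. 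Combined with injectivity of $\lambda+\Aa_\delta\supseteq\lambda+\widetilde{\Aa}_\delta$, this forces $D(\Aa_\delta)=D(\widetilde{\Aa}_\delta)$, which is the assertion. The hard part throughout is the index bookkeeping in applying \cite[Corollary~4.10]{DPRS22}: verifying that the chosen smoothness parameter yields exactly $H^{7/2}(\Omega)$ and that the boundary data lands in $L^2(\Gamma)\times H^2(\Gamma)$, and exploiting the divergence form (coefficients need only be multipliers on $H^{s-1}$ rather than $H^{s-2}$) to see that $\BUC^4$, $\BUC^3$, $\BUC^1$, $\BUC^3$ and a $C^6$-boundary suffice; the identification step is then a formal consequence of the resulting isomorphism property.
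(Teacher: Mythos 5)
The heart of your argument for ``$\subseteq$'' --- namely that $u_1$ solves the system~\eqref{11-5'}--\eqref{11-7'} with data $(f,g)\in L^2(\Omega)\times L^2(\Gamma)$, that this system is parameter-elliptic by Lemma~\ref{Lem:ParEll}, and that \cite[Corollary~4.10]{DPRS22} with $\tau=m_j+\tfrac1p$ then yields $u_1\in H^{7/2}(\Omega)$ --- is exactly the paper's proof, and your verification of ``$\supseteq$'' via multiplier estimates is routine and correct. You are also right to worry about identifying the $H^{7/2}$-solution produced by \cite{DPRS22} with the given $u_1$; the paper is silent on this point and simply reads off the a priori estimate.

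However, the identification step you propose contains a genuine gap. When you insert $u_2=\tr u_1$ into $(\lambda+\widetilde{\Aa}_\delta)\uu=\ff$, the second component becomes a boundary condition of the form $\beta\,\partial_\nu^Q(\alpha B u_1)+\beta\delta\tr(\alpha Bu_1)+(\lambda+\gamma)\tr u_1=f_2$. In the parameter-elliptic calculus the parameter $\lambda$ carries the weight $2m=4$, so the term $\lambda\tr u_1$ has $\lambda$-homogeneity $4$, which \emph{exceeds} the order $3$ of the principal boundary operator $\beta\,\partial_\nu^Q(\alpha B)$: it is not a lower-order perturbation but changes the principal boundary symbol. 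This is precisely the structural difficulty of Wentzell conditions, and it is why the paper establishes well-posedness by the form method rather than by elliptic theory for the coupled resolvent problem. Lemma~\ref{Lem:ParEll} verifies the Shapiro--Lopatinskii condition only for the $\lambda$-independent boundary operators $\bigl(-\beta\partial_\nu^Q(\alpha B),\,\partial_\nu^Q+\delta\tr\bigr)$, and \cite[Corollary~4.10]{DPRS22} does not apply to problems with the spectral parameter in the boundary condition; nor can you fix $\lambda$ and treat $\lambda\tr u_1$ as a zeroth-order perturbation, since the solvability threshold ``$|\lambda|\ge\lambda_0$'' would then have to be reached with a perturbation that grows with $\lambda$. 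The repair is to argue as the paper does: keep $g\coloneqq-\beta\partial_\nu^Q(\alpha B)u_1\in L^2(\Gamma)$ on the \emph{right-hand side} as known data, so that the boundary operators remain $\lambda$-free, and apply \cite[Corollary~4.10]{DPRS22} to that system; if an explicit identification of $u_1$ with the unique $H^{7/2}$-solution is desired, uniqueness should be extracted from the accretivity of the shifted form $\b$ (respectively $\a_\delta$), not from the parameter-elliptic theory applied to the coupled problem.
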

 \begin{proof}
 By the above all assumptions of \cite[Corollary~4.10]{DPRS22} are satisfied, hence we obtain
 $$\|u_1\|_{H^{7/2}_{\lambda}(\Omega)}\leq \|f\|_{L^2(\Omega)}+\|g\|_{B^0_{22,\lambda}(\Gamma)} < \infty$$
due to $B^0_{22,\lambda}(\Gamma)=L^2_\lambda(\Gamma)=L^2(\Gamma).$
 \end{proof}
\subsection{Hölder regularity}\label{classhoel}
We show next that on Lipschitz domains and with coefficients as in Hypotheses~\ref{hyp1.1} and \ref{HypQ} the solution $\uu(t,\cdot)=(u_1(t,\cdot),u_2(t,\cdot))$ of \eqref{10-1}--\eqref{10-4} satisfies that $u_1(t, \cdot)$ is Hölder continuous for every $t>0$. This implies that $\tr u_1(t,\cdot)=u_2(t,\cdot)$ also holds in a classical sense.

 Recall that $\Tt(t)$ maps $\Hh$ into $D(\Aa_\delta^\infty)$ for any $t>0$ as it is analytic. It is not to be expected, however, that $ \uu(t,\cdot) \in D(\Aa_\delta^\infty)$ implies $u_1(t,\cdot) \in H^{3/2+\eps}(\Omega)$ for any $\eps>0$, as such a gain in differentiability does not even necessarily hold for the much simpler Neumann Laplacian due to possible non-convex corners (cf.\ \cite{Kon67}). So Hölder-continuity cannot be derived by Sobolev embedding directly in high dimensions. However, we can use a bootstrapping idea on the integrability.

To that end we use the regular spaces $L^p(\Omega)$ and $L^p(\Gamma)$ where the coefficient $\beta$ is not included, and write $\|\cdot\|_{\Omega,p}$ and $\|\cdot\|_{\Gamma,p}$ for the occurring norms, respectively. In the case $p=2$, the index is dropped. Note that the $L^p$-spaces are nested as our domain $\Omega$ is bounded. Furthermore, recall that $C^{0,\vartheta}(\Omega)$ refers to the space of $\vartheta$-H\"older continuous functions on $\Omega$, and note that every function $u\in C^{0,\vartheta}(\Omega)$ can be extended uniquely
to a (H\"older) continuous function on $\overline{\Omega}$.

As a preparation, we establish some further results concerning weak solutions of the inhomogeneous Neumann problem
\begin{equation}\label{inhom}
\begin{alignedat}{4}
(\lambda-\Div Q \nabla) u& =\tilde f&\;&\textnormal{ in } \Omega, \\
\partial^Q_\nu u& = \tilde g&& \textnormal{ on }\Gamma.
\end{alignedat}
\end{equation}
\begin{proposition}\label{Kor:SmoothR}
Let $\tilde f \in L^2(\Omega)$, $\tilde g \in L^2(\Gamma)$.
\begin{enumerate}[\upshape(i)]
\item For $\lambda>0$,~\eqref{inhom} has a unique weak solution, by which we mean a function $u\in H^1(\Omega)$ such that
\begin{equation}\label{SolWeak}
\b^\lambda(u,v)=\la Q \nabla u, \nabla v\ra_\Omega+ \< \lambda u, v\>_\Gamma = \la \tilde f, v\ra_\Omega + \la \tilde g, \tr v\ra_\Gamma
\end{equation}
for all $v\in H^1(\Omega)$.
Furthermore, $u \in H^{3/2}_B(\Omega)$, and we have the estimate 
\[\|u\|^2_{H^{3/2}_B(\Omega)}\leq C(\|\tilde f\|^2_\Omega+\|\tilde g\|^2_\Omega).\]
\item Let $\tilde f \in L^{\dm/2+\eps}(\Omega)$, $\tilde g \in L^{\dm-1+\eps}(\Gamma)$ for some $\eps>0$. Then, for any $\lambda \in \R$, any weak solution of~\eqref{inhom} satisfies $u \in C^{0,\vartheta}(\Omega)$ for some $\vartheta \in (0,1)$ and the estimate
\begin{equation}\label{hoellift}
   \|u\|_{C^{0,\vartheta}(\Omega)}\leq C \big(\|u\|_{\Omega}+\|\tilde f\|_{\Omega,\frac{\dm}{2}+\eps}+\|\tilde g\|_{\Gamma, \dm-1+\eps}\big).
\end{equation}

If $\lambda>0$, we can drop the $\|u\|_{\Omega}$-term on the right-hand side.
\item Let $\tilde f\in L^{p}(\Omega)$, $\tilde g \in L^{p}(\Gamma)$ for some $p>2$. Then, for $\lambda>0$, the unique solution $u$ of \eqref{inhom} satisfies $(u,\tr u)\in L^{\phi(p)}(\Omega)\times L^{\phi(p)}(\Gamma)$ and 
\begin{equation}\label{plift} \|u\|_{\Omega, \phi(p)} + \|\tr u\|_{\Gamma, \phi(p)} \le C_0 \Big(\|\tilde f\|_{\Omega, p} + \|\tilde g\|_{\Gamma,p}\Big) 
\end{equation}
where
\[\phi(p) \coloneqq \begin{cases}
  \frac{\dm-2}{\dm-p}\, p & \text{ if } p\in (2,\dm),\\
  \infty & \text{ if } p\in [\dm,\infty).
\end{cases}\]
\end{enumerate}
\end{proposition}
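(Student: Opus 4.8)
The three statements in Proposition~\ref{Kor:SmoothR} should be addressed in order, with (i) serving as the foundation. For (i), I would first note that the form $\b^\lambda(u,v)=\la Q\nabla u,\nabla v\ra_\Omega+\la\lambda u,\tr v\ra_\Gamma$ is a bounded, coercive, symmetric sesquilinear form on $H^1(\Omega)$ for $\lambda>0$ (coercivity from the uniform positive-definiteness of $Q$ together with the nonnegativity of the boundary term), so Lax--Milgram immediately yields existence and uniqueness of $u\in H^1(\Omega)$ satisfying \eqref{SolWeak}, along with the estimate $\|u\|_{H^1(\Omega)}\le C(\|\tilde f\|_\Omega+\|\tilde g\|_\Omega)$; here one uses the continuity of $\tr\colon H^1(\Omega)\to L^2(\Gamma)$ to bound the right-hand side. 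Rewriting \eqref{SolWeak} as $\la Q\nabla u,\nabla v\ra_\Omega=\la\tilde f-\lambda u,v\ra_\Omega+\la\tilde g-\lambda\tr u,\tr v\ra_\Gamma$ and reading off $\Div Q\nabla u=-(\tilde f-\lambda u)\in L^2(\Omega)$ shows $u\in H^1_B(\Omega)$ with $\partial_\nu^Q u=\tilde g-\lambda\tr u\in L^2(\Gamma)$; since $\tilde g-\lambda\tr u\in L^2(\Gamma)$ we get $u\in D(\partial_\nu^Q)=D(N^\b)=H^{3/2}_B(\Omega)$ by Theorem~\ref{SatzA1}(iii), and the quantitative bound $\|u\|_{H^{3/2}_B(\Omega)}^2\le C(\|u\|_\Omega^2+\|Bu\|_\Omega^2)$ from Lemma~\ref{cono}(iii) (applied, after subtracting a suitable lift $\Upsilon^{3/2}_Q(\tilde g-\lambda\tr u)$, or directly via the closed graph / compatible-family machinery) combines with part (i)'s $H^1$-estimate to give the claimed inequality.

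For (ii), the Hölder regularity, I would invoke the De Giorgi--Nash--Moser theory for divergence-form operators with bounded measurable (indeed $C^{1,1}$) coefficients and Neumann-type boundary data: a weak solution of $(\lambda-\Div Q\nabla)u=\tilde f$ with $\partial_\nu^Q u=\tilde g$, where $\tilde f\in L^{\dm/2+\eps}(\Omega)$ and $\tilde g\in L^{\dm-1+\eps}(\Gamma)$, is locally Hölder continuous up to the (Lipschitz) boundary, with the modulus of continuity and the estimate \eqref{hoellift} controlled by $\|u\|_\Omega$ and the norms of the data. The relevant statements are in, e.g., the literature on boundary regularity for the Neumann problem on Lipschitz domains (Stampacchia-type arguments, or the references used in \cite{Nit11,War13} for the second-order Wentzell case); I would cite these rather than reprove them. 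The $\lambda>0$ case allows dropping $\|u\|_\Omega$ because the form is then coercive, so $\|u\|_\Omega$ itself is controlled by the data as in (i).

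For (iii), the $L^p\to L^{\phi(p)}$ smoothing, I would again lean on known results: for $\lambda>0$ the solution operator $(\tilde f,\tilde g)\mapsto u$ for the Neumann problem on a Lipschitz domain maps $L^p(\Omega)\times L^p(\Gamma)$ into a space with the stated Sobolev-type gain, and composing with the Sobolev embedding $H^2\hookrightarrow L^{\dm p/(\dm-2p)}$ (interior) together with the trace embedding $H^{3/2}(\Omega)\to L^{2(\dm-1)/(\dm-3)}(\Gamma)$ or, more robustly, an $L^p$-elliptic-regularity estimate on Lipschitz domains, produces the exponent $\phi(p)=\frac{\dm-2}{\dm-p}p$ for $p\in(2,\dm)$ and $L^\infty$-bounds for $p\ge\dm$; the estimate \eqref{plift} is the corresponding norm bound. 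The cleanest route is to combine the $H^{3/2}_B$-estimate from (i) with fractional Sobolev embeddings when $p$ is close to $2$, and bootstrap for larger $p$ using that $\Div Q\nabla u=\lambda u-\tilde f$ now has better integrability — exactly the bootstrapping alluded to in the surrounding text.

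\textbf{Main obstacle.} The routine part is (i), which is essentially Lax--Milgram plus the trace identification already established. The genuinely delicate points are the boundary regularity claims in (ii) and (iii): on a mere Lipschitz domain one cannot use smooth-domain elliptic regularity, and one must be careful about the precise Lebesgue exponents ($\dm/2+\eps$ in the interior, $\dm-1+\eps$ on the boundary) that are borderline for De Giorgi--Nash--Moser-type continuity and for the Sobolev/trace embeddings. I expect the main work to be in correctly citing or adapting the Lipschitz-boundary Neumann regularity theory (as in \cite{Nit11,War13}) so that it applies to the variable-coefficient operator $-\Div Q\nabla$ with the weak co-normal $\partial_\nu^Q$ in the sense of \eqref{Def:WeakNeuA}, and in verifying that the function $\phi(p)$ matches the embedding exponents exactly at the stated endpoints.
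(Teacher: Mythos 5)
Parts (i) and (ii) of your plan agree in substance with the paper. For (i) the paper does not invoke Lax--Milgram by name but builds the solution explicitly as $(\lambda+B_N)^{-1}(\tilde f+\Div Q\nabla\Upsilon_N^{3/2}\tilde g-\lambda\Upsilon_N^{3/2}\tilde g)+\Upsilon_N^{3/2}\tilde g$; the essential steps — identify $B_{\max}u$ and $\partial_\nu^Q u$ from the weak formulation via Theorem~\ref{SatzA1}, subtract the Neumann lift $\Upsilon_N^{3/2}\tilde g$, and apply Lemma~\ref{cono}(iii) to the homogeneous-Neumann remainder — are exactly the ones you list. For (ii) the paper likewise just cites Nittka's Lipschitz-domain regularity theory (\cite[Theorem~3.1.6]{Nit10}) with the same exponents. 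One small correction for (i): the $\lambda$-term in \eqref{SolWeak} is an $L^2(\Omega)$-pairing (compare \eqref{inhom}), not the boundary pairing $\la\lambda u,\tr v\ra_\Gamma$ you wrote; your coercivity argument survives the correct reading, but your subsequent rewriting, which subtracts $\lambda u$ both in $\Omega$ and on $\Gamma$, is inconsistent and should be fixed.

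The genuine gap is in (iii). The exponent $\phi(p)=\frac{\dm-2}{\dm-p}\,p$ is not a Sobolev or trace-embedding exponent, and none of the routes you sketch produces it. There is no $W^{2,p}$ (or $W^{3/2,p}$) elliptic regularity on a Lipschitz domain to feed into $H^2\hookrightarrow L^{\dm p/(\dm-2p)}$. The $H^{3/2}_B$-estimate from (i) only sees the $L^2$-norms of the data, so it yields a gain that is independent of $p$ (namely $u\in L^{2\dm/(\dm-3)}(\Omega)$), which is strictly weaker than $L^{\phi(p)}$ in general (for $\dm=10$, $p=4$ it gives exponent $20/7$ versus $\phi(4)=16/3$). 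And the bootstrap you allude to has no mechanism for converting higher integrability of $(\tilde f,\tilde g)$ into higher integrability of $u$ without precisely the $L^p$-theory that is missing. The paper's argument is different and is the essential point: one interpolates the solution operator $R_\lambda:(\tilde f,\tilde g)\mapsto(u,\tr u)$ between the two endpoint bounds already established, namely $L^2(\Omega)\times L^2(\Gamma)\to L^2\times L^2$ from (i) and $L^\dm(\Omega)\times L^\dm(\Gamma)\to L^\infty\times L^\infty$ from (ii), using complex interpolation of tuples of $L^p$-spaces (\cite[Theorems~1.18.1 and 1.18.6/2]{Tri95}); the exponent $\phi(p)$ is exactly the resulting interpolation exponent, determined by $\frac1p=\frac{1-\theta}{2}+\frac{\theta}{\dm}$ and $\frac{1}{\phi(p)}=\frac{1-\theta}{2}$. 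Without this interpolation step your proof of (iii) does not close.
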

\begin{proof}~
\begin{proofenum}
\item We construct a solution candidate by collecting properties of a weak solution. At first we observe that for any such weak solution $u \in H^1(\Omega)$ we have, given any $v \in D(B_0) \subseteq D(B_N)$, 
\[\<u,B_0 v\>_\Omega=\b(u,v)=\b^\lambda(u,v)-\lambda\<u,v\>_\Omega=\<\tilde f-\lambda u,v\>_\Omega.\] Hence $u \in D(B_0^*)$ and $\tilde f=(\lambda+B_0^*)u$. By Theorem~\ref{SatzA1}~(ii), we also have $(\lambda+B_{\max})u=(\lambda+B_0^* u)=\tilde f$ by , so the first line of~\eqref{inhom} holds in $L^2(\Omega)$, where $-\Div Q \nabla$ is seen as $L^2(\Omega)$-realization of an object in $H^{-2}(\Omega)$. Furthermore, for all $v \in H^1(\Omega)$ we obtain \[\<-B_0^*u,v\>_\Omega+\b(u,v)=\<-\tilde f,v\>_\Omega+\b^\lambda(u,v)=\<\tilde g, \tr v\>_\Gamma,\] which shows $u \in D(N^{\b})$ and $-N^{\b} u=\tilde g$. However, Theorem~\ref{SatzA1}~(iii) yields $D(N^\b)=H^{3/2}_B(\Omega)$ and  $\gamma_Q^{3/2} u=\partial_\nu^Q u=-N^\b u=\tilde g.$ So when we subtract $v=\Upsilon^{3/2}_N \tilde g$ where $\Upsilon^{3/2}_N$ is the continuous right-inverse of $\gamma_Q^{3/2}$ from Lemma~\ref{cono}~(i), the difference $u-v$ solves the Neumann problem
\begin{equation}\label{inhom2}
\begin{alignedat}{4}
(\lambda-\Div Q \nabla)(u-v)& =\tilde f+\Div Q \nabla v-\lambda v &\;&\text{ in } \Omega, \\
\gamma_Q^{3/2} (u-v)& =0&& \text{ on }\Gamma.
\end{alignedat}
\end{equation}
Thus $u-v \in D(B_N)$, as well as $(\lambda-\Div Q \nabla)(u-v)=(\lambda+B_N)(u-v)$.
In conclusion, we have shown that any weak solution of~\eqref{inhom} satisfies $(\lambda+B_N)(u-v)=\tilde f+\Div Q \nabla v-\lambda v.$
Hence, a suitable solution candidate is given by
$\tilde u\coloneqq(\lambda+B_N)^{-1}(\tilde f+\Div Q \nabla \Upsilon^{3/2}_N \tilde g -\lambda \Upsilon^{3/2}_N \tilde g)+\Upsilon^{3/2}_N \tilde g$, which is well defined due to $(-\infty,0) \in \rho(B_N)$.

Finally, we verify $\tilde u$ indeed is a solution and satisfies the regularity estimate.
By Lemma~\ref{cono}~(iii), we have 
\begin{align*} 
\|\tilde u-v\|^2_{H^{3/2}_B(\Omega)}&\leq C( \|\tilde u-v\|^2_\Omega+\|B_N(\tilde u-v)\|_\Omega)=C\|\tilde u-v\|^2_{B_N} \\
&=C\|(\lambda+B_N)^{-1}(\tilde f+\Div Q \nabla v-\lambda v)\|^2_{B_N} \leq C\big(\| \tilde f\|_\Omega^2+\|v\|_{H^{3/2}_B(\Omega)}^2\big)
\end{align*}
(where the constant $C$ is generic) and thus by the above and the continuity of $\Upsilon_N^{3/2}$
\[ \|\tilde u\|^2_{H^{3/2}_B(\Omega)}\leq  C \big(\|\tilde f\|^2_\Omega+\|v\|^2_{H^{3/2}_B(\Omega)}\big)\leq C (\|\tilde f\|^2_\Omega+\|\tilde g\|^2_\Gamma). \]
Now, naturally, $\tilde u$ solves~\eqref{inhom} in a strong sense by construction. By definition of $-N^\b=\partial_\nu^Q$ it also satisfies~\eqref{SolWeak} and is in particular a weak solution with all the desired properties.

The uniqueness is straightforward: If we had two weak solutions $u,w \in H^1(\Omega)$ satisfying~\eqref{SolWeak} for all $v \in H^1(\Omega)$, we would have $\b^\lambda(u-w,v)=0$ for all $v \in H^1(\Omega).$ Now $\lambda+B_N$ is the associated operator to $\b_\lambda$, whence $u-w$ in $D(B_N)$ and $(\lambda+B_N)(u-w)=0.$ Again, by $(-\infty,0) \subseteq \rho(B_N)$, we have $u-w=0$. 
    \item This is \cite[Theorem~3.1.6]{Nit10} applied to $A(x,u,p)=Q \cdot p$, $a(x,u,p)=\lambda u$. Note that their Assumption 2.9.1 is satisfied, and we are in the situation of \cite[Remark~3.1.7]{Nit10}. If $\lambda>0$, we can estimate
    \[\|u\|_\Omega \leq \|u\|_{H^{3/2}_B(\Omega)}\leq C(\|\tilde f\|^2_\Omega+\|\tilde g\|^2_\Omega) \leq C(\|\tilde f\|^2_{\frac{\dm}{2}+\eps,\Omega}+\|\tilde g\|^2_{\dm-1+\eps,\Omega}).\] 
    \item Let $\lambda>0$. By (i) and (ii) the unique solution satisfies the two estimates 
\begin{equation}\label{8-2}
\|u\|_{\Omega, 2}+\|\tr u\|_{\Gamma, 2} \leq C \|u\|_{H^{3/2}_B(\Omega)} \leq C\big (\|\tilde f\|_{\Omega, 2}+\| \tilde g\|_{\Gamma, 2} \Big),
\end{equation}
as well as 
\begin{equation}\label{8-1}
\|u\|_{\Omega, \infty} + \|\tr u\|_{\Gamma, \infty} \leq \|u\|_{C^{0,\vartheta}(\Omega)} \leq
C\big( \|\tilde f\|_{\Omega, \dm} + \|\tilde g\|_{\Gamma, \dm}\big).
\end{equation}
More precisely, the solution operator $R_\lambda$ that maps $(\tilde f,\tilde g)$ to $(u,\tr u)$ is well defined and continuous from $X_0\coloneqq L^2(\Omega) \times L^2(\Gamma)$ to $Y_0\coloneqq L^2(\Omega) \times L^2(\Omega)$
as well as from $X_1\coloneqq L^\dm(\Omega) \times L^\dm(\Gamma)$ to $Y_1\coloneqq L^\infty(\Omega) \times L^\infty(\Omega)$. 
By complex interpolation, we obtain that $R_\lambda$ is also continuous from $[X_0,X_1]_\theta$ to $[Y_0,Y_1]_\theta$ for all
$\theta\in (0,1)$. To identify the interpolation spaces, recall from \cite[Theorem~1.18.1]{Tri95} that
complex interpolation of tuples of $L^p$-spaces yields the tuple of interpolated spaces in the sense of
\[ [L^{p_0}(\Omega)\times L^{q_0}(\Gamma), L^{p_1}(\Omega)\times L^{q_1}(\Gamma)]_\theta = [L^{p_0}(\Omega),L^{p_1}(\Omega)]_\theta \times
[L^{q_0}(\Gamma)\times L^{q_1}(\Gamma)]_\theta \]
for all $p_0,p_1,q_0,q_1\in [1,\infty]$.
Moreover, we have the equality $[L^{p_0}(\Omega), L^{p_1}(\Omega)]_\theta = L^p(\Omega)$ (and a similar equality for $\Gamma$) for $\frac 1p=\frac{1-\theta}{p_0} + \frac\theta{p_1}$ in the sense of equivalent norms, see \cite[Theorem~1.18.6/2]{Tri95}.  From this, we obtain for all
$\theta\in (0,1)$ the
continuity of $R_\lambda: X_\theta \to Y_\theta$ where
$X_\theta \coloneqq  L^{p}(\Omega) \times L^{p}(\Gamma)$
and $Y_\theta \coloneqq L^{\phi(p)}(\Omega)\times L^{\phi(p)}(\Gamma)$ with  $p$ and $\phi(p)$ being
defined by $\frac 1p = \frac{1-\theta}2+ \frac \theta \dm$ and
$\frac 1{\phi(p)}=\frac{1-\theta}2$. For $p\in (2,\dm)$, the first equation yields
$\theta = \frac{\dm(p-2)}{(\dm-2)p}$, and the second equation gives
\[ \phi(p) = \frac{2}{1-\theta} = \frac{\dm-2}{\dm-p}\; p .\]
This proves the assertion for $p\in (2,\dm)$. For $p\ge \dm$ the statement follows
directly from~\eqref{8-1}.\qedhere
\end{proofenum}
\end{proof}
However, the estimate we actually would like to make use of would be of type \eqref{plift} for solutions of the inhomogeneous \emph{Robin} problem with $\lambda=0$, i.e.
\begin{equation}\label{inhomrob}
\begin{alignedat}{4}
-\Div Q \nabla u& =f&\;&\text{ in } \Omega, \\
\partial^Q_\nu u + \delta \tr u& =g&& \text{ on }\Gamma,
\end{alignedat}
\end{equation}
because in order to obtain higher regularity for the Wentzell problem we decouple it into two underlying Robin problems of precisely that form. 
Though Hölder continuity for the Robin case is also established in \cite[Example~4.2.7]{Nit10}, this is not helpful in our situation, since an explicit estimate of type~\eqref{hoellift} is not given there due to the complexity of the bootstrapping argument, and we cannot deduce~\eqref{plift}, as before.
To avoid this obstacle, we rewrite the Robin problem into a Neumann problem, to which we apply Proposition~\ref{Kor:SmoothR}. The price we pay is that the solution $u$ appears on the right-hand side and we have to assume a priori that its integrability is as high as the data's.
\begin{lemma}\label{IPolr}
  Let $\dm\ge 2,\, p\in (2,\infty)$. Then, there is a constant $C_0>0$ such that whenever $u \in H^{3/2}_B(\Omega)$ is a weak solution of~\eqref{inhomrob} with $f, u \in L^{p}(\Omega)$, as well as $g, \tr u \in L^{p}(\Gamma)$, we have $(u,\tr u)\in L^{\phi(p)}(\Omega)\times L^{\phi(p)}(\Gamma)$ and
\[ \|u\|_{\Omega, \phi(p)} + \|\tr u\|_{\Gamma, \phi(p)} \le C_0 \Big( \|u\|_{\Omega, p} + \|f\|_{\Omega, p} + \|g\|_{\Gamma,p}+\|\tr u\|_{\Gamma,p}\Big) \]
where
\begin{equation}\label{DefPhi}
    \phi(p) \coloneqq \begin{cases}
  \frac{\dm-2}{\dm-p}\, p & \textnormal{ if } p\in (2,\dm),\\
  \infty & \textnormal{ if } p\in [\dm,\infty).
\end{cases}
\end{equation}
\end{lemma}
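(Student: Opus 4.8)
The plan is to reduce the inhomogeneous Robin problem~\eqref{inhomrob} to an inhomogeneous Neumann problem of the form~\eqref{inhom} and then apply Proposition~\ref{Kor:SmoothR}~(iii). Fix any $\lambda>0$. Let $u\in H^{3/2}_B(\Omega)$ be a weak solution of~\eqref{inhomrob}; the identity we extract from this is the variational one, namely
\[
\la Q\nabla u,\nabla v\ra_\Omega+\la\delta\tr u,\tr v\ra_\Gamma=\la f,v\ra_\Omega+\la g,\tr v\ra_\Gamma\qquad(v\in H^1(\Omega)),
\]
which for $u\in H^{3/2}_B(\Omega)$ is equivalent, by Theorem~\ref{SatzA1}~(ii)--(iii) and the definition of $N^\b$, to the strong statements $-\Div Q\nabla u=f$ in $L^2(\Omega)$ and $\partial_\nu^Q u=g-\delta\tr u$ in $L^2(\Gamma)$. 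Adding $\lambda\la u,v\ra_\Omega$ to both sides and grouping terms, $u$ satisfies the weak Neumann identity~\eqref{SolWeak} with the data
\[
\tilde f\coloneqq f+\lambda u\in L^2(\Omega),\qquad \tilde g\coloneqq g-\delta\tr u\in L^2(\Gamma),
\]
where finiteness of the $L^2$-norms is clear since $u\in H^1(\Omega)$, $\tr u\in L^2(\Gamma)$ and $\delta\in L^\infty(\Gamma)$. By the uniqueness assertion in Proposition~\ref{Kor:SmoothR}~(i), $u$ is exactly the unique weak solution of~\eqref{inhom} attached to this pair $(\tilde f,\tilde g)$.

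The second step is to record the integrability of the modified data: since $\Omega$ is bounded and $f,u\in L^p(\Omega)$, we get $\tilde f\in L^p(\Omega)$ with $\|\tilde f\|_{\Omega,p}\le\|f\|_{\Omega,p}+\lambda\|u\|_{\Omega,p}$, and since $g,\tr u\in L^p(\Gamma)$ and $\delta\in L^\infty(\Gamma)$, we get $\tilde g\in L^p(\Gamma)$ with $\|\tilde g\|_{\Gamma,p}\le\|g\|_{\Gamma,p}+\|\delta\|_{L^\infty(\Gamma)}\|\tr u\|_{\Gamma,p}$. As $\lambda>0$ and $p>2$, Proposition~\ref{Kor:SmoothR}~(iii) applies to $u$ (the exponent function $\phi$ there agrees with the one in~\eqref{DefPhi}), giving $(u,\tr u)\in L^{\phi(p)}(\Omega)\times L^{\phi(p)}(\Gamma)$ and
\[
\|u\|_{\Omega,\phi(p)}+\|\tr u\|_{\Gamma,\phi(p)}\le C\bigl(\|\tilde f\|_{\Omega,p}+\|\tilde g\|_{\Gamma,p}\bigr).
\]
Inserting the two data bounds on the right produces the claimed estimate with a constant $C_0$ that depends only on $d$, $p$, $\lambda$, $Q$ and $\|\delta\|_{L^\infty(\Gamma)}$, hence is independent of $u$, $f$, $g$.

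I do not expect a genuine obstacle: the proof is a soft reduction followed by a direct invocation of Proposition~\ref{Kor:SmoothR}. The only matters requiring care are bookkeeping --- verifying that the given $u$ is literally the unique solution produced by Proposition~\ref{Kor:SmoothR}~(i) for the modified data, so that part~(iii) may be used for it, and tracking constants --- and the conceptual point that the norms $\|u\|_{\Omega,p}$ and $\|\tr u\|_{\Gamma,p}$ necessarily appear on the right-hand side: they are precisely the cost of absorbing the zeroth-order boundary term $\delta\tr u$ (together with the auxiliary shift $\lambda u$) into the data, which is why the $L^p$-integrability of $u$ and of $\tr u$ has to be assumed a priori. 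Finally, note the harmless degenerate case $d=2$, where the interval $(2,d)$ is empty, so $\phi(p)=\infty$ for every admissible $p$ and the statement reads off directly from the $p\ge d$ branch of Proposition~\ref{Kor:SmoothR}~(iii).
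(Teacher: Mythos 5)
Your proposal is correct and follows essentially the same route as the paper: both absorb the boundary term $\delta\tr u$ and a zeroth-order shift $\lambda u$ into the data, identify $u$ with the unique weak solution of the resulting Neumann problem~\eqref{inhom} via Proposition~\ref{Kor:SmoothR}~(i), and then invoke part~(iii); the paper simply fixes $\lambda=1$ where you keep $\lambda>0$ general.
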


\begin{proof}
Let $u \in H^1(\Omega)$ be a weak solution of~\eqref{inhomrob}. Then, given any $v \in D(B_0) \subseteq D(B_N)$, 
$$\<u,B_0 v\>_\Omega=\b(u,v)=\b_\delta(u,v)=\<f,v\>_\Gamma.$$ Hence, $u \in D(B_0^*)$, $f=B_0^*u=(B_{0,\delta})^*u$ by Theorem~\ref{Thm:Robin}~(ii). Once more, we have $B_{\max}u=B_0^*u=f$ by Theorem~\ref{SatzA1}~(ii), so the first line of~\eqref{inhomrob} holds in $L^2(\Omega)$, where $-\Div Q \nabla$ is seen as $L^2(\Omega)$-realization of an object in $H^{-2}(\Omega)$. Furthermore, for all $v \in H^1(\Omega)$ we obtain \[\<-(B_{0,\delta})^* u,v\>_\Omega+\b_{\delta}(u,v)=\<g, \tr v\>_\Gamma\] for all $v \in H^1(\Omega)$, which shows $u \in D(N^{\b_\delta})$ and (by Theorem~\ref{Thm:Robin}) $-N^{\b_\delta} u=\partial_\nu^Q u+\delta \tr u=g$. Therefore, $u$ also solves~\eqref{inhom} with $\lambda=1$, $\tilde f=f+u \in L^2(\Omega)$, and $\tilde g=g-\delta \tr u \in L^2(\Gamma)$, whence it must coincide with this problem's unique solution. Hence, Proposition~\ref{Kor:SmoothR} is applicable and as, due to the extra assumption, $(u, \tr u)$ is also an element of  $L^p(\Omega) \times L^p(\Gamma)$, so is $(\tilde f,\tilde g)$. Then, by Proposition~\ref{Kor:SmoothR} we have 
\begin{align*} \|u\|_{\Omega, \phi(p)} + \|\tr u\|_{\Gamma, \phi(p)} &\le C \Big(  \|\tilde f\|_{\Omega, p} + \|\tilde g\|_{\Gamma,p}\Big) \\
&\le C_0 \Big( \|u\|_{\Omega, p} + \|f\|_{\Omega, p} + \|g\|_{\Gamma,p}+\|\tr u\|_{\Gamma,p}\Big)\end{align*} as desired.
\end{proof}

We obtain the following corollary about the integrability of elements of $D(\Aa_\delta)$, where $\phi(r)$ is defined as in~\eqref{DefPhi}.

\begin{korollar}\label{boot}
Let $r> 2$.
If $\uu \in D(\Aa_\delta) \cap (L^r(\Omega)\times L^r(\Gamma))$, $\Aa_\delta\uu \in L^r(\Omega)\times L^r(\Gamma)$ and $(\alpha \Div Q \nabla u_1, \tr \alpha \Div Q \nabla u_1) \in L^r(\Omega)\times L^r(\Gamma)$, then
$\uu \in L^{\phi(r)}(\Omega)\times L^{\phi(r)}(\Gamma)\textrm{~and~}(\alpha \Div Q \nabla u_1 , \tr \alpha \Div Q \nabla u_1) \in L^{\phi(r)}(\Omega)\times L^{\phi(r)}(\Gamma).$
\end{korollar}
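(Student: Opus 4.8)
The plan is to apply Lemma~\ref{IPolr} twice: first to the ``inner'' function $w \coloneqq \alpha \Div Q \nabla u_1$, and then to $u_1$ itself, exploiting that membership in $D(\Aa_\delta)$ forces both of these functions to be weak solutions of inhomogeneous Robin problems of the form~\eqref{inhomrob}.

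First I would unwind the description of $D(\Aa_\delta)$ from Theorem~\ref{Thm:Robin}. Since $\uu \in D(\Aa_\delta)$ we have $u_1, w \in H^{3/2}_B(\Omega)$, $u_1 \in D(B_{N,\delta})$, $u_2 = \tr u_1$, and (using $N^{\b_\delta}=\NN^{\b_\delta}$ by surjectivity, Theorem~\ref{Thm:Robin}~(i)) $w \in D(N^{\b_\delta})$, where I recall $N^{\b_\delta}=-(\gamma_Q^{3/2}+\delta\tr)=-(\partial_\nu^Q+\delta\tr)$. Reading off the two rows of~\eqref{Adelta} then gives, on the one hand, $-\Div Q \nabla w = (B_{0,\delta})^* w = -(\Aa_\delta\uu)_1$ in $L^2(\Omega)$, and on the other hand $\partial_\nu^Q w + \delta \tr w = -N^{\b_\delta}w = \beta^{-1}\big(\gamma u_2 - (\Aa_\delta\uu)_2\big)$ on $\Gamma$. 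Unwinding the definition of $N^{\b_\delta}$ shows that these two identities together say precisely that $w$ is a weak solution of~\eqref{inhomrob} with data $f_w \coloneqq -(\Aa_\delta\uu)_1$ and $g_w \coloneqq \beta^{-1}(\gamma u_2-(\Aa_\delta\uu)_2)$. Because $\alpha\ge\eta$ and $\beta\ge\eta$ (so $\alpha^{-1},\beta^{-1}\in L^\infty$) and $\gamma\in L^\infty(\Gamma)$, the hypotheses yield $f_w\in L^r(\Omega)$ and $g_w\in L^r(\Gamma)$, while $w\in L^r(\Omega)$ and $\tr w\in L^r(\Gamma)$ are assumed outright; hence Lemma~\ref{IPolr} with $p=r$ gives $(w,\tr w)\in L^{\phi(r)}(\Omega)\times L^{\phi(r)}(\Gamma)$.

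For the second application, note that $u_1 \in D(B_{N,\delta})$ means $\b_\delta(u_1,v)=\la B_{N,\delta}u_1,v\ra_\Omega$ for every $v\in H^1(\Omega)$, i.e.\ $u_1$ is itself a weak solution of~\eqref{inhomrob} with right-hand side $f_1\coloneqq B_{N,\delta}u_1 = -\Div Q\nabla u_1 = -\alpha^{-1}w$ and homogeneous boundary datum $g_1=0$ (the homogeneous Robin condition being part of $u_1\in D(B_{N,\delta})$). Since $w\in L^r(\Omega)$ we have $f_1\in L^r(\Omega)$, and $u_1\in L^r(\Omega)$, $\tr u_1=u_2\in L^r(\Gamma)$ are assumed, so a second application of Lemma~\ref{IPolr} with $p=r$ yields $(u_1,\tr u_1)\in L^{\phi(r)}(\Omega)\times L^{\phi(r)}(\Gamma)$. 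Combining the two conclusions and using $u_2=\tr u_1$ and $w=\alpha\Div Q\nabla u_1$ gives $\uu\in L^{\phi(r)}(\Omega)\times L^{\phi(r)}(\Gamma)$ and $(\alpha\Div Q\nabla u_1,\tr\alpha\Div Q\nabla u_1)\in L^{\phi(r)}(\Omega)\times L^{\phi(r)}(\Gamma)$, as claimed.

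The step I expect to require the most care is the bookkeeping in the second paragraph: translating the abstract objects $B_{0,\delta}$, $(B_{0,\delta})^*$, $N^{\b_\delta}$ and $\NN^{\b_\delta}$ into the concrete weak Robin formulation demanded by Lemma~\ref{IPolr}, and keeping the signs straight given $B=-\Div Q\nabla$ and $w=\alpha\Div Q\nabla u_1=-\alpha Bu_1$. Once that dictionary is in place, the only analytic input is the two invocations of Lemma~\ref{IPolr}, and all remaining estimates are immediate from $\alpha,\beta$ being bounded below and $\gamma$ being bounded.
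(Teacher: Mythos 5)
Your proposal is correct and follows essentially the same route as the paper: both proofs apply Lemma~\ref{IPolr} twice, first to $\pm\alpha\Div Q\nabla u_1$ viewed as a weak solution of the inhomogeneous Robin problem with data read off from $\Aa_\delta\uu$, and then to $u_1$ as a solution of the (homogeneous-boundary-datum) Robin problem with right-hand side $-\Div Q\nabla u_1$. The only differences are cosmetic (a sign convention for $w$ and whether one cites $w\in L^r$ directly from the hypothesis or via $L^{\phi(r)}\subseteq L^r$).
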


\begin{proof}
By Theorem~\ref{Thm:Robin}, we have for $\uu \in D(\Aa_\delta)$
\begin{align*}
(\Aa_\delta\uu)_1&=\Div Q \nabla \alpha (\Div Q \nabla u_1),\\
 (\Aa_\delta\uu)_2&=-\beta(\partial_\nu^Q+\delta \tr)(\alpha \Div Q \nabla u_1)+\gamma u_2.
\end{align*}
Thus, if $\uu$ satisfies the assumption of this corollary, then $w=-\alpha \Div Q \nabla u_1$
solves the inhomogeneous Robin problem
\begin{align*}
-\Div Q \nabla w&=(\Aa_\delta\uu)_1 \in L^r(\Omega) \\
  (\partial_\nu^Q+\tr \delta)w&=\beta^{-1}(\Aa_\delta\uu)_2-\beta^{-1}\gamma u_2 \in L^r(\Gamma).
\end{align*}
As $(\alpha \Div Q\nabla u_1,\tr \alpha \Div Q\nabla u_1) \in L^r(\Omega)\times L^r(\Gamma)$ by assumption, so is $(w,\tr w)$. Hence, by Lemma~\ref{IPolr}, $(w, \tr w) \in L^{\phi(r)}(\Omega) \times L^{\phi(r)}(\Gamma)$, which also implies  $\Div Q \nabla u_1 \in L^{\phi(r)}(\Omega)$ as the functions $\alpha,\alpha^{-1}$ are bounded.
Naturally $L^{\phi(r)}(\Omega)\subseteq L^{r}(\Omega)$.
Since $\uu \in D(\Aa_\delta)  \cap (L^r(\Omega)\times L^r(\Gamma))$, we also know that $(\partial_\nu^Q+\delta \tr)u_1=0$ and $(u,\tr u)=(u_1,u_2) \in L^r(\Omega)\times L^r(\Gamma)$, whence $u_1$ solves the homogeneous Robin problem
\begin{align*}
-\Div Q \nabla u_1&=-\Div Q \nabla u_1 \in L^{r}(\Omega), \\
  (\partial_\nu^Q+\delta \tr) u_1&=0 \in L^{r}(\Gamma).
\end{align*}
Applying Lemma~\ref{IPolr} once more yields $u_1 \in L^{\phi(r)}(\Gamma)$ and $u_2=\tr u_1 \in L^{\phi(r)}(\Gamma)$, as claimed.
\end{proof}

We can now prove the main result of this section.

\begin{satz}\label{HoelReg}
Let $\uu \in D(\Aa_\delta^\infty)$. Then $u_1 \in C^{0,\vartheta}(\Omega)$ for some $\vartheta \in (0,1)$. In particular, this shows Hölder continuity of $u_1(t,\cdot)$ for $t>0$ due to the analyticity of the semigroup $\Tt$.
\end{satz}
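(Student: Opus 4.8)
The plan is a bootstrap on integrability. Using Corollary~\ref{boot} I would push $u_1$, $\tr u_1$ and the auxiliary function $w\coloneqq\alpha\Div Q\nabla u_1$ into $L^p$-spaces with $p$ as large as needed, and then conclude by the Hölder estimate of Proposition~\ref{Kor:SmoothR}~(ii), applied to the Neumann reformulation of the homogeneous Robin problem satisfied by $u_1$, precisely as in the proof of Lemma~\ref{IPolr}. Set $\uu^{(k)}\coloneqq\Aa_\delta^k\uu\in D(\Aa_\delta^\infty)\subseteq D(\Aa_\delta)$, write $u_1^{(k)},u_2^{(k)}$ for its components, and $w^{(k)}\coloneqq\alpha\Div Q\nabla u_1^{(k)}$. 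By Theorem~\ref{Thm:Robin}, for every $k$ one has $u_1^{(k)},w^{(k)}\in H^{3/2}_B(\Omega)$, $u_2^{(k)}=\tr u_1^{(k)}$, $\partial_\nu^Q u_1^{(k)}+\delta\tr u_1^{(k)}=0$, and $\Aa_\delta\uu^{(k)}=\uu^{(k+1)}$. The case $\dm\le2$ follows at once from the Sobolev embedding $H^{3/2}(\Omega)\hookrightarrow C^{0,\vartheta}(\Omega)$, so I would assume $\dm\ge3$ (so that Corollary~\ref{boot} and Lemma~\ref{IPolr} apply).

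I would run the bootstrap over the whole orbit simultaneously, proving by induction on $j$ that there is a strictly increasing sequence $2<r_1<r_2<\dots$ with $r_{j+1}=\phi(r_j)$ such that for every $k\in\N_0$,
\[
\uu^{(k)}\in L^{r_j}(\Omega)\times L^{r_j}(\Gamma)\qquad\text{and}\qquad (w^{(k)},\tr w^{(k)})\in L^{r_j}(\Omega)\times L^{r_j}(\Gamma).
\]
For $j=1$: since $\dm\ge3$ and $u_1^{(k)},w^{(k)}\in H^{3/2}_B(\Omega)\subseteq H^{3/2}(\Omega)\subseteq H^1(\Omega)$, Sobolev embedding gives $H^{3/2}(\Omega)\hookrightarrow L^{r}(\Omega)$ for some $r>2$, and the trace theorem $\tr\colon H^1(\Omega)\to H^{1/2}(\Gamma)$ together with Sobolev embedding on $\Gamma$ gives $\tr u_1^{(k)},\tr w^{(k)}\in H^{1/2}(\Gamma)\hookrightarrow L^{r'}(\Gamma)$ for some $r'>2$; so $r_1\coloneqq\min(r,r')>2$ works for all $k$. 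For the inductive step, fix $j$ and $k$ and apply Corollary~\ref{boot} to $\uu^{(k)}$ with exponent $r=r_j$: its hypotheses $\uu^{(k)}\in D(\Aa_\delta)\cap(L^{r_j}\times L^{r_j})$, $\Aa_\delta\uu^{(k)}=\uu^{(k+1)}\in L^{r_j}\times L^{r_j}$ and $(w^{(k)},\tr w^{(k)})\in L^{r_j}\times L^{r_j}$ are all guaranteed by the inductive hypothesis (the middle one being that hypothesis at level $k+1$), and the conclusion upgrades both $\uu^{(k)}$ and $(w^{(k)},\tr w^{(k)})$ to $L^{\phi(r_j)}\times L^{\phi(r_j)}$. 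Since $\phi(r)>r$ for $r\in(2,\dm)$ and $\phi$ has no fixed point in the bounded interval $(2,\dm)$, the increasing sequence $(r_j)$ must exceed $\dm$ after finitely many steps, say $r_M\ge\dm$; then $L^{r_M}\subseteq L^{\dm/2+\eps}(\Omega)$ and $L^{r_M}(\Gamma)\subseteq L^{\dm-1+\eps}(\Gamma)$ for small $\eps>0$, so in particular $u_1,\,w\in L^{\dm/2+\eps}(\Omega)$ and $\tr u_1\in L^{\dm-1+\eps}(\Gamma)$.

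To finish, observe that $\Div Q\nabla=-B$, so from $w=-\alpha Bu_1$ we get $-\Div Q\nabla u_1=Bu_1=-\alpha^{-1}w\in L^{\dm/2+\eps}(\Omega)$, while $\partial_\nu^Q u_1+\delta\tr u_1=0$; thus $u_1$ is the weak solution of the homogeneous Robin problem~\eqref{inhomrob} with data $f=-\alpha^{-1}w$ and $g=0$. Arguing as in the proof of Lemma~\ref{IPolr}, $u_1$ then also solves the Neumann problem~\eqref{inhom} with $\lambda=1$, $\tilde f=-\alpha^{-1}w+u_1\in L^{\dm/2+\eps}(\Omega)$ and $\tilde g=-\delta\tr u_1\in L^{\dm-1+\eps}(\Gamma)$, and it coincides with that problem's unique weak solution; hence Proposition~\ref{Kor:SmoothR}~(ii) yields $u_1\in C^{0,\vartheta}(\Omega)$ for some $\vartheta\in(0,1)$. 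The addendum is then immediate: for any $\uu_0\in\Hh$ and $t>0$, analyticity of $\Tt$ gives $\uu(t,\cdot)=\Tt(t)\uu_0\in D(\Aa_\delta^\infty)$, whence $u_1(t,\cdot)\in C^{0,\vartheta}(\Omega)$.

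I expect the main obstacle to be the bookkeeping of the bootstrap rather than any single estimate: Corollary~\ref{boot} cannot be iterated on $\uu$ in isolation, because re-applying it at the improved exponent $r_{j+1}$ requires $\Aa_\delta\uu\in L^{r_{j+1}}$, which is only available once the entire orbit $(\Aa_\delta^k\uu)_{k\ge0}$ has been upgraded — hence the simultaneous induction above — together with the (elementary) verification that the recursion $r\mapsto\phi(r)$ escapes $(2,\dm)$ in finitely many steps.
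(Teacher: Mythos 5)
Your proposal is correct and follows essentially the same route as the paper: the same bootstrap on integrability via Corollary~\ref{boot} (the paper organizes it as an induction showing $D(\Aa_\delta^k)\subseteq L^{r_k}\times L^{r_k}$ together with the auxiliary function, which is exactly your simultaneous induction over the orbit), the same base case from $H^{3/2}_B(\Omega)\subseteq H^1(\Omega)$ and Sobolev embedding, and the same conclusion via Proposition~\ref{Kor:SmoothR}~(ii) once the exponent exceeds $\dm$. The only cosmetic difference is that the paper applies Proposition~\ref{Kor:SmoothR}~(ii) directly with $\lambda=0$ to the Neumann problem $\partial_\nu^Q u_1=-\delta\tr u_1$ rather than passing through the $\lambda=1$ reformulation of Lemma~\ref{IPolr}.
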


\begin{proof}
Let $\uu \in D(\Aa_\delta)$. Then $u_1,\alpha \Div Q \nabla u_1 \in H^{3/2}_\Delta(\Omega) \subseteq H^1(\Omega)$. Furthermore, $\tr u_1$, $\tr (\alpha \Div Q \nabla u_1) \in H^1(\Gamma)$. By Sobolev embedding  (see \cite[Theorem~4.12]{AF03}), we obtain $H^1 \subseteq L^\frac{2\dm}{\dm-2}$. Hence, for $\dm \leq 4$, we have $H^1 \subseteq L^\dm,$ which shows 
\[D(\Aa_\delta)\subseteq\{\uu \in L^{\dm}(\Omega)\times L^{\dm}(\Gamma)~|~ \big (\alpha \Div Q \nabla u_1,\tr (\alpha \Div Q \nabla u_1)\big ) \in  L^{\dm}(\Omega) \times L^{\dm}(\Gamma)\}.\]
If $\dm\geq 5$, we have $u_1,\alpha \Div Q \nabla u_1 \in L^\frac{2\dm}{\dm-2}(\Omega)$ and $\tr u_1,\tr (\alpha \Div Q \nabla u_1) \in L^\frac{2\dm}{\dm-2}(\Gamma).$ 
This shows \[D(\Aa_\delta)\subseteq\{\uu \in L^{r_1}(\Omega)\times L^{r_1}(\Gamma)~|~ \big (\alpha \Div Q \nabla u_1,\tr (\alpha \Div Q \nabla u_1)\big ) \in  L^{r_1}(\Omega) \times L^{r_1}(\Gamma)\}\] for $r_1=\frac{2\dm}{\dm-2}.$
Inductively, we obtain
\[D(\Aa_\delta^k)\subseteq\{\uu \in L^{r_k}(\Omega)\times L^{r_k}(\Gamma)~|~ \big (\alpha \Div Q \nabla u_1,\tr (\alpha \Div Q \nabla u_1)\big ) \in  L^{r_k}(\Omega) \times L^{r_k}(\Gamma)\},\] where $r_k=\phi(r_{k-1})=\phi^{k-1}(\frac{2\dm}{\dm-2})$.
Indeed, assume this statement is true for some $k$ and consider $\uu\in D(\Aa_\delta^{k+1})$. Then $\uu \in D(\Aa_\delta^k) \subseteq D(\Aa_\delta)$
and $\Aa_\delta\uu \in D(\Aa_\delta^k)$. By induction hypothesis, $\uu, \Aa_\delta\uu \in   L^{r_{k}}(\Omega)\times  L^{r_{k}}(\Gamma)$, and $\big (\alpha \Div Q \nabla u_1,\tr (\alpha \Div Q \nabla u_1)\big ) \in L^{r_k}(\Omega) \times L^{r_k}(\Gamma)$. Hence, Corollary~\ref{boot} yields $\uu \in L^{\phi(r_{k})}(\Omega)\times  L^{\phi(r_{k})}(\Gamma)=L^{r_{k+1}}(\Omega)\times  L^{r_{k+1}}(\Gamma)$ as well as \[\big (\alpha \Div Q \nabla u_1,\tr (\alpha \Div Q \nabla u_1)\big )\in L^{\phi(r_{k})}(\Omega)\times  L^{\phi(r_{k})}(\Gamma)=L^{r_{k+1}}(\Omega)\times  L^{r_{k+1}}(\Gamma).\]
From the structure of the map $\varphi$ it is clear that $(r_k)_{k\in\mathbb \N}$ is an increasing sequence that tends to $\infty$.
Hence for all $\dm \in \N$ we have found a $k_0\in \mathbb{N}$ such that 
\[D(\Aa_\delta^{k_0}) \subseteq \{\uu \in L^\dm(\Omega)\times L^\dm(\Gamma)~|~\big (\alpha \Div Q \nabla u_1,\tr (\alpha \Div Q \nabla u_1)\big ) \in  L^{\dm}(\Omega) \times L^{\dm}(\Gamma)\} .\] For any such $\uu \in D(\Aa_\delta^{k_0})$,
we have $-\Div Q \nabla u_1=-\Div Q \nabla u_1\eqqcolon \tilde f \in L^{\dm}(\Omega)$ as well as $\partial_\nu^Q u_1=-\delta \tr u_1\eqqcolon \tilde g \in L^\dm(\Gamma)$ due to $\alpha \in L^\infty(\Omega)$, $\delta \in L^\infty(\Gamma)$. Thus $u_1$ is a weak solution of~\eqref{inhom} for $\lambda=0$, and Proposition~\ref{Kor:SmoothR}~(ii) implies $u_1 \in C^{0,\vartheta}(\Omega)$ as claimed.
\end{proof}

\begin{bemerkung}\label{CorHoel}
The proof of Theorem~\ref{HoelReg} actually yields a number $k_0\in \mathbb{N}$, depending only on the dimension $\dm$, such that $\uu \in D(\Aa_\delta^{k_0})$ implies $u_1\in C^{0,\vartheta}(\Omega)$. The number $k_0$ we calculated there is not sharp, but the embedding certainly does not hold for dimensions that are too large. 

For example, for the case of Neumann boundary conditions, i.e.\ $\delta=0$, we can verify $D(\Aa) \subseteq C^{0,\vartheta}(\Omega)$ for $\dm\leq 6$, simply as $\alpha \Div Q \nabla u_1 \in H^{3/2}(\Omega) \subseteq L^\frac{2\dm}{\dm-3}(\Omega) \subseteq L^{\dm/2+\eps}(\Omega)$ from which the assertions follows from Proposition~\ref{Kor:SmoothR}~(ii) for $\lambda=0$ (cf.\ \cite[Theorem~7.34]{AF03}). 

But, at least for constant $Q$ and $\alpha$ it is quite simple to construct functions in $D(\Aa)$ which are not Hölder continuous for $\dm \geq 8$. As Sobolev embeddings are sharp, we know that for $\dm\geq 8$ the Sobolev space $H^4(\Omega)$ is not contained in $L^\infty(\Omega)$.
Now, let $\Omega'$ be a smooth domain contained in $\Omega$.
Let $v$ be a function that lives in $H^4(\Omega')$ but not in $L^\infty(\Omega')$. As $\Omega'$ is smooth there exists an extension of $v$ (denoted by $v$ again) to $H^4(\R^\dm)$, which still cannot be in $L^\infty(\R^\dm)$.  Now let $\phi \in C_c^\infty(\Omega)$ such that $\phi = 1$ on $\Omega'$. Then the function $u=\phi \cdot v \in H^4(\Omega) \setminus L^\infty(\Omega)$. Moreover, it satisfies the Neumann boundary condition $\partial_\nu u=0$ as it is compactly supported on $\Omega$. Hence $(u,\tr u) \in D(\Aa)$ (as all the other regularity conditions are implied by $H^4$-regularity because $Q$ and $\alpha$ are constant). However, $(u,\tr u) \not\in L^\infty(\Omega) \times L^\infty(\Gamma)$, so $\uu$ cannot be Hölder continuous.
\end{bemerkung}
\subsection{Asymptotic behavior and eventual positivity}
\label{spec}
In this section, we derive asymptotic properties of our solution. We are going to skip the proofs whenever neither variable coefficients nor extra Robin-term are relevant and the ideas can be carried over directly from \cite[Chapter~6]{Denk-Kunze-Ploss21}, as is the case for the next two results.

\begin{lemma}
The operator $\Aa_\delta$  has compact resolvent.
\end{lemma}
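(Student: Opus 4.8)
The plan is to invoke the standard criterion: a self-adjoint operator that is bounded below and associated to a closed symmetric form has compact resolvent precisely when its form domain embeds compactly into the underlying Hilbert space. By Theorem~\ref{Thm:Robin} the operator $\Aa_\delta$ is the self-adjoint operator associated to the closed, symmetric, semi-bounded form $\a_\delta$ on $\Hh = L^2(\Omega)\times L^2(\Gamma)$ with form domain
\[ D(\a_\delta) = \{\uu = (u_1,u_2) \in \Hh \mid u_1 \in D(B_{N,\delta}),\ u_2 = \tr u_1\}, \]
so it suffices to show that $\big(D(\a_\delta),\|\cdot\|_{\a_\delta}\big)$ embeds compactly into $\Hh$. (Once this is known at one point of the resolvent set, compactness at every point follows from the resolvent identity; and the $\beta^{-1}$-weight on the $\Gamma$-component of $\Hh$ only changes the norm to an equivalent one, as $\beta$ is bounded above and below.)

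First I would establish the continuous embedding $D(\a_\delta) \hookrightarrow H^{3/2}(\Omega) \times H^1(\Gamma)$. Exactly as in the proof of Theorem~\ref{GenOP2}~(i), the form norm dominates the graph norm of $B_{N,\delta}$ on the first component, i.e.\ $\|u_1\|_\Omega^2 + \|B_{N,\delta}u_1\|_\Omega^2 \le C\|\uu\|_{\a_\delta}^2$ for all $\uu \in D(\a_\delta)$. By Theorem~\ref{Thm:Robin}~(iii) we have $D(B_{N,\delta}) = \ker N^{\b_\delta} \subseteq D(N^{\b_\delta}) = H^{3/2}_B(\Omega) \subseteq H^{3/2}(\Omega)$, and the inclusion of the Banach space $\big(D(B_{N,\delta}),\|\cdot\|_{B_{N,\delta}}\big)$ into the Banach space $H^{3/2}(\Omega)$ is closed, hence bounded by the closed graph theorem; this gives $\|u_1\|_{H^{3/2}(\Omega)} \le C\big(\|u_1\|_\Omega + \|B_{N,\delta}u_1\|_\Omega\big) \le C\|\uu\|_{\a_\delta}$. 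Since in addition $u_2 = \tr u_1$ and the Dirichlet trace maps $H^{3/2}(\Omega)$ boundedly into $H^1(\Gamma)$ (cf.\ Definition~\ref{Def:Strong} and Lemma~\ref{cono}), we also obtain $\|u_2\|_{H^1(\Gamma)} \le C\|\uu\|_{\a_\delta}$, which proves the claimed continuous embedding.

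Next I would use the two Rellich-type compactness results: the embedding $H^{3/2}(\Omega) \hookrightarrow L^2(\Omega)$ is compact because $\Omega$ is a bounded Lipschitz domain, and $H^1(\Gamma) \hookrightarrow L^2(\Gamma)$ is compact because $\Gamma$ is a compact Lipschitz hypersurface. Given a bounded sequence $(\uu^{(n)})_n$ in $D(\a_\delta)$, the first components $(u_1^{(n)})_n$ are bounded in $H^{3/2}(\Omega)$, so a subsequence converges in $L^2(\Omega)$; along that subsequence the second components $u_2^{(n)} = \tr u_1^{(n)}$ are bounded in $H^1(\Gamma)$, so a further subsequence converges in $L^2(\Gamma)$. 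Hence $(\uu^{(n)})_n$ has a subsequence converging in $\Hh$, which proves that $D(\a_\delta) \hookrightarrow \Hh$ is compact, and therefore $\Aa_\delta$ has compact resolvent.

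I do not expect a genuine obstacle: the only nontrivial ingredient is the $H^{3/2}$-regularity of $D(B_{N,\delta})$, which is already supplied by Theorems~\ref{SatzA1} and~\ref{Thm:Robin}; after that the argument is the routine combination of the form criterion for compact resolvent with the Rellich embeddings on $\Omega$ and on $\Gamma$, exactly along the lines of \cite[Chapter~6]{Denk-Kunze-Ploss21}. If one wished to avoid the closed graph theorem, one could instead observe directly that $B_{N,\delta}$ has compact resolvent, since $D(B_{N,\delta}) \subseteq H^{3/2}(\Omega)$ embeds compactly into $L^2(\Omega)$, and then handle the boundary component separately; the form-domain route is, however, the most transparent.
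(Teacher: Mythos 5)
Your argument is correct and is essentially the proof the paper delegates to \cite[Chapter~6]{Denk-Kunze-Ploss21}: the form domain $D(\a_\delta)$ embeds continuously into $H^{3/2}(\Omega)\times L^2(\Gamma)$ via the $H^{3/2}_B$-regularity of $D(B_{N,\delta})$, and the Rellich embeddings then give compactness of $D(\a_\delta)\hookrightarrow\Hh$, hence compact resolvent. One small caveat: the paper only guarantees boundedness of $\tr\colon H^s(\Omega)\to H^{s-1/2}(\Gamma)$ for $s\in(\frac12,\frac32)$ on Lipschitz domains (the endpoint $s=\frac32$ requires the graph space $H^{3/2}_\Delta$), so rather than invoking $\tr\colon H^{3/2}(\Omega)\to H^1(\Gamma)$ you should either use the graph-norm bound to land in $H^{3/2}_B(\Omega)$ and apply the endpoint trace there, or simply note that boundedness in $H^{3/2}(\Omega)$ gives precompactness in $H^{5/4}(\Omega)$ and hence, by $\tr\colon H^{5/4}(\Omega)\to H^{3/4}(\Gamma)\hookrightarrow L^2(\Gamma)$, convergence of the traces in $L^2(\Gamma)$; either repair is immediate and does not affect the conclusion.
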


\begin{korollar}\label{c.spectrum}
There exists an orthonormal basis $(\ee_n)_{n}$ of $\Hh$ consisting of eigenfunctions of $\Aa_\delta$, say
$\Aa_\delta \ee_n = \lambda_n \ee_n$, where the sequence $\lambda_n$ is increasing to $\infty$, allowing the representation 
\[\Aa_\delta \ff= \sum_{k=1}^\infty \lambda_k\<\ff,\ee_k\>_{\Hh}\ee_k\] for all $\ff \in D(\Aa_\delta)$. Moreover, as $\ee_n \in D(\Aa_\delta^\infty)$, it has a H\"older continuous representative in the sense that
there exists a function $e_n \in C^{0,\vartheta}(\Omega)$ such that $\ee_n = (e_n|_{\Omega}, e_n|_{\Gamma})$.

Finally, for all $\ff \in \Hh$, the semigroup $\Tt$ can be represented as
\begin{align} \label{sgdecomp}
\uu(t)=(u_1(t),u_2(t))=\Tt (t)\ff=\sum_{k=1}^\infty e^{-\lambda_k t}\<\ff,\ee_k\>_{\Hh}\ee_k.
\end{align}
\end{korollar}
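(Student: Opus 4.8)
The plan is to read off all three assertions from machinery already in place: the lemma just above, that $\Aa_\delta$ has compact resolvent; the self-adjointness and semi-boundedness of $\Aa_\delta$ (Theorem~\ref{GenOP2} applied to the associated form, together with Theorem~\ref{Thm:Robin}); and the interior H\"older regularity of Theorem~\ref{HoelReg}. First I would invoke the spectral theorem for self-adjoint operators with compact resolvent: since $\Aa_\delta$ is self-adjoint, bounded from below, and $(\mu-\Aa_\delta)^{-1}$ is compact for $\mu$ in the resolvent set, there is a countable orthonormal basis $(\ee_n)_n$ of $\Hh$ with $\Aa_\delta\ee_n = \lambda_n\ee_n$, where the eigenvalues, listed with multiplicity and arranged in non-decreasing order, satisfy $\lambda_n \to +\infty$ (they are bounded below by $-\|\gamma\|_\infty$ by semi-boundedness). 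On $D(\Aa_\delta) = \{\ff\in\Hh : \sum_k \lambda_k^2 |\langle\ff,\ee_k\rangle_\Hh|^2 < \infty\}$ one then has $\Aa_\delta\ff = \sum_k \lambda_k\langle\ff,\ee_k\rangle_\Hh\ee_k$, which is the claimed representation.

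Next, for the regularity statement, I would note that every eigenvector lies in $D(\Aa_\delta^\infty)$: from $\Aa_\delta\ee_n = \lambda_n\ee_n \in D(\Aa_\delta)$ we get $\ee_n\in D(\Aa_\delta^2)$, and, iterating, $\ee_n\in D(\Aa_\delta^k)$ for every $k\in\N$. Theorem~\ref{HoelReg} then yields that the first component $(\ee_n)_1$ admits a representative in $C^{0,\vartheta}(\Omega)$ for some $\vartheta\in(0,1)$, and by Remark~\ref{CorHoel} both $\vartheta$ and the required number of iterations depend only on $\dm$. Since $\ee_n\in D(\Aa_\delta)$ forces $(\ee_n)_2 = \tr(\ee_n)_1$, and the H\"older representative of $(\ee_n)_1$ extends uniquely to a continuous function $e_n$ on $\overline\Omega$, the boundary component of $\ee_n$ is precisely $e_n|_\Gamma$; hence $\ee_n = (e_n|_\Omega, e_n|_\Gamma)$.

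For the semigroup formula I would use that $-\Aa_\delta$ generates $\Tt$ and that the Borel functional calculus of the self-adjoint operator $\Aa_\delta$ realizes $\Tt(t) = e^{-t\Aa_\delta}$. Concretely, for $\ff\in\Hh$ and $t\ge 0$ put $\uu(t)\coloneqq \sum_{k=1}^\infty e^{-\lambda_k t}\langle\ff,\ee_k\rangle_\Hh\ee_k$; this converges in $\Hh$ since $\lambda_k\ge\lambda_1$ gives $\big\|\sum_{k>N} e^{-\lambda_k t}\langle\ff,\ee_k\rangle_\Hh\ee_k\big\|_\Hh^2 \le e^{-2\lambda_1 t}\sum_{k>N}|\langle\ff,\ee_k\rangle_\Hh|^2 \to 0$. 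One then checks that $t\mapsto\uu(t)$ is strongly continuous at $0$ (Parseval), is differentiable for $t>0$ with $\uu'(t) = -\sum_k\lambda_k e^{-\lambda_k t}\langle\ff,\ee_k\rangle_\Hh\ee_k$ (term-by-term differentiation being legitimate because $\lambda_k e^{-\lambda_k t}$ stays bounded), that $\uu(t)\in D(\Aa_\delta)$ with $\uu'(t) = -\Aa_\delta\uu(t)$, and $\uu(0)=\ff$; uniqueness for the Cauchy problem governed by the analytic semigroup then gives $\uu(t) = \Tt(t)\ff$.

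All of this is routine, the substantive content being confined to the three cited inputs. The one place that needs a little care rather than a bare appeal to ``eigenfunctions are smooth'' is the identification of the boundary component of $\ee_n$ with $\tr(\ee_n)_1$, which relies on the explicit domain description of $\Aa_\delta$ from Theorem~\ref{Thm:Robin} together with the continuity of $(\ee_n)_1$ on $\overline\Omega$ established above; the term-by-term differentiation in the semigroup formula is the other detail worth spelling out, but it presents no genuine difficulty.
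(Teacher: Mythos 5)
Your argument is correct and is essentially the same as the paper's (the paper omits the proof, deferring to the analogous result in \cite{Denk-Kunze-Ploss21}, which runs exactly along these lines: spectral theorem for a semi-bounded self-adjoint operator with compact resolvent, bootstrapping eigenfunctions into $D(\Aa_\delta^\infty)$ to invoke Theorem~\ref{HoelReg}, and the functional-calculus representation of $e^{-t\Aa_\delta}$). The two details you flag — identifying the boundary component with the restriction of the continuous representative via the domain description in Theorem~\ref{Thm:Robin}, and justifying term-by-term differentiation — are exactly the right points to spell out, and your treatment of them is sound.
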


\begin{lemma}\label{kernel}~

\begin{enumerate}[\upshape(i)]
\item If $\gamma= 0$ almost everywhere, then we have $\ker(\Aa_\delta)\subseteq {\rm span}(\ind{\Omega},\ind{\Gamma})$ and $\int_\Gamma \delta |\tr u_1|^2\textrm{d}S=0$.
\item If $\gamma = 0$, $\delta=0$ almost everywhere, then $\lambda_1=0$ and $\ker(\Aa)={\rm span}(\one_\Omega, \one_\Gamma).$
\item If $\gamma, \delta \geq 0$ and either $\gamma>0$ or $\delta>0$ on a set of positive surface measure, then 
$\lambda_1 >0$ and we have $\ker(\Aa_\delta)=\{0\}$.
\item If $\int_\Gamma \gamma\, \dx S <0$, then $\lambda_1<0$.
\end{enumerate}
\end{lemma}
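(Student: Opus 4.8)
The plan is to work entirely with the closed symmetric form $\a_\delta$ to which $\Aa_\delta$ is associated (Theorem~\ref{GenOP2}, in the Robin version of Theorem~\ref{Thm:Robin}) and with the variational (Rayleigh quotient) characterization
\[
\lambda_1 \;=\; \inf\bigl\{\a_\delta(\uu,\uu) : \uu\in D(\a_\delta),\ \|\uu\|_\Hh=1\bigr\},
\]
which is available because $\Aa_\delta$ is self-adjoint, bounded below, and — by the preceding Lemma — has compact resolvent, so $\lambda_1$ is its smallest eigenvalue. The kernel criterion I would record first: if $\uu=(u_1,u_2)\in\ker\Aa_\delta$ then $\a_\delta(\uu,\vv)=\langle\Aa_\delta\uu,\vv\rangle_\Hh=0$ for all $\vv\in D(\a_\delta)$, and in particular, using $u_2=\tr u_1$ and $u_1\in D(B_{N,\delta})$,
\[
\int_\Omega \alpha\,|B_{N,\delta}u_1|^2\,\dx x \;+\; \int_\Gamma \beta^{-1}\gamma\,|u_2|^2\,\mathrm{d}S \;=\; \a_\delta(\uu,\uu) \;=\; 0 .
\]

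For (i) and (iii) (where $\gamma\geq 0$) both integrands are nonnegative, so each integral vanishes; since $\alpha\geq\eta>0$ this forces $B_{N,\delta}u_1=0$, i.e.\ $\b_\delta(u_1,v)=0$ for all $v\in H^1(\Omega)$. Testing with $v=u_1$ gives $\int_\Omega\langle Q\nabla u_1,\nabla u_1\rangle\,\dx x+\int_\Gamma\delta|\tr u_1|^2\,\mathrm{d}S=0$, so by uniform positive definiteness of $Q$ and $\delta\geq 0$ we get $\nabla u_1\equiv 0$ — hence $u_1=c\,\one_\Omega$, as $\Omega$ is connected — together with $\int_\Gamma\delta|\tr u_1|^2\,\mathrm{d}S=0$, which is exactly (i). For (iii) the vanishing boundary integral reads $|c|^2\int_\Gamma\beta^{-1}\gamma\,\mathrm{d}S=0$ while also $|c|^2\int_\Gamma\delta\,\mathrm{d}S=0$; if $\gamma$ (resp.\ $\delta$) is positive on a set of positive surface measure then $\int_\Gamma\beta^{-1}\gamma\,\mathrm{d}S>0$ (resp.\ $\int_\Gamma\delta\,\mathrm{d}S>0$), forcing $c=0$, hence $\ker\Aa_\delta=\{0\}$; since $\gamma\geq 0$ makes $\a_\delta\geq 0$ and thus $\lambda_1\geq 0$, and since the resolvent is compact, triviality of the kernel upgrades this to $\lambda_1>0$. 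For (ii) one has $\gamma=\delta=0$, so $B_{N,\delta}=B_N$ and the computation above yields $\ker\Aa\subseteq\mathrm{span}(\one_\Omega,\one_\Gamma)$; conversely $\one_\Omega\in D(B_N)$ with $B_N\one_\Omega=0$ (as $\b(\one_\Omega,v)=\langle Q\nabla\one_\Omega,\nabla v\rangle_\Omega=0$), so by the operator formula of Theorem~\ref{GenOP2}(ii) the pair $(\one_\Omega,\one_\Gamma)$ lies in $D(\Aa)$ with $\Aa(\one_\Omega,\one_\Gamma)=\bigl(B_0^*(\alpha B_N\one_\Omega),\,-\beta\,\NN^\b(\alpha B_N\one_\Omega)\bigr)=(0,0)$; this gives $\ker\Aa=\mathrm{span}(\one_\Omega,\one_\Gamma)$, and $0$ being an eigenvalue together with $\lambda_1\geq 0$ yields $\lambda_1=0$.

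For (iv) I would use the variational characterization and exhibit a single test vector of negative energy. In the Neumann case $\delta=0$, the constant pair $\uu_0=(\one_\Omega,\one_\Gamma)$ lies in $D(\a)$ as above, and
\[
\a(\uu_0,\uu_0)=\langle\gamma\,\one_\Gamma,\one_\Gamma\rangle_{\Gamma,\beta}=\int_\Gamma\beta^{-1}\gamma\,\mathrm{d}S ,
\]
which is negative by hypothesis, so $\lambda_1\leq \a(\uu_0,\uu_0)/\|\uu_0\|_\Hh^2<0$.

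Parts (i)--(iii) are essentially bookkeeping once the kernel criterion is in place — one must only keep track of the weight $\beta^{-1}$ and of the connectedness of $\Omega$. The real obstacle is (iv) in the genuinely Robin case $\delta\neq 0$: there $\one_\Omega$ no longer belongs to $D(B_{N,\delta})$, and a trace inequality combined with the strict positivity of $B_{N,\delta}$ shows that any $u_1\in D(B_{N,\delta})$ whose trace is close to $\one_\Gamma$ has $\|B_{N,\delta}u_1\|_\Omega$ bounded below, so the interior term in $\a_\delta(\uu,\uu)$ cannot be made arbitrarily small. Producing a negative-energy test vector — and thereby matching the precise (weighted) form of the hypothesis — requires a delicate choice that balances this unavoidable positive interior contribution against the negative boundary term, and this is the step that needs the most care.
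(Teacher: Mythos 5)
Your treatment of (i)--(iii) is exactly the paper's: for $\uu\in\ker(\Aa_\delta)$ one has $0=\a_\delta(\uu,\uu)=\int_\Omega\alpha|B_{N,\delta}u_1|^2\,\dx x+\int_\Gamma\beta^{-1}\gamma|u_2|^2\,\mathrm{d}S$, both terms vanish since $\gamma\ge0$ and $\alpha\ge\eta$, testing $\b_\delta(u_1,u_1)=0$ gives $\nabla u_1=0$ and $\int_\Gamma\delta|\tr u_1|^2\,\mathrm{d}S=0$, and positivity of $\gamma$ or $\delta$ on a set of positive surface measure kills the constant; the reduction of the sign of $\lambda_1$ to the kernel via accretivity and compact resolvent is likewise the paper's. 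These parts are correct.

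Concerning (iv): your computation with the pair $(\one_\Omega,\one_\Gamma)$ \emph{is} the paper's entire proof (the paper writes only ``plugging $(\one_\Omega,\one_\Gamma)\in D(\a_\delta)$ into the Rayleigh quotient''), so you have not missed any idea that the paper supplies. The obstruction you flag for $\delta\neq0$ is genuine: $\one_\Omega\in D(B_{N,\delta})$ would require an $f\in L^2(\Omega)$ with $\<f,v\>_\Omega=\b_\delta(\one_\Omega,v)=\<\delta,\tr v\>_\Gamma$ for all $v\in H^1(\Omega)$; testing with $v\in C_c^\infty(\Omega)$ forces $f=0$, and then density of $\tr H^1(\Omega)$ in $L^2(\Gamma)$ forces $\delta=0$ almost everywhere. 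Hence the constant pair lies in $D(\a_\delta)$ only in the Neumann case, the paper's one-line argument is only valid there, and for $\delta>0$ on a set of positive measure neither you nor the paper exhibits a negative-energy test vector -- indeed, as you note, the interior term $\|\sqrt{\alpha}B_{N,\delta}u_1\|_\Omega^2$ cannot be made small while keeping $\tr u_1$ close to $\one_\Gamma$, so that case would require an additional quantitative argument (or a restriction to $\delta=0$ in the statement of (iv)). In short: your proof matches the paper wherever the paper's proof is complete, and the one step you leave open is a step the paper also does not carry out.
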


\begin{proof}
In cases (i)--(iii), we have $\gamma, \delta \geq 0$. Hence, $\a_\delta$ is accretive, so we have $\lambda_1 \geq 0$. Thus, whether $\lambda_1=0$ or $\lambda_1>0$ depends only on $\ker (\Aa_\delta)$.\smallskip

 (i) Suppose $\gamma= 0$ almost everywhere, then $\uu \in \ker(\Aa_\delta)$ implies $u_1 \in \ker (B_{N,\delta})$. More precisely, let $\uu \in \mathrm{ker}(\Aa_\delta)\subseteq D(\Aa_\delta)\subseteq D(\a_\delta)$. Then
\[
0=\<\Aa_\delta \uu,\uu\>_{\mathscr{H}}=\a_\delta(\uu,\uu)=\int_\Omega \alpha |B_{N,\delta} u_1|^2 \mathrm{d}x.
\]
It follows that $\alpha |B_{N,\delta} u_1|^2=0$ and hence, since $\alpha (x)\geq \eta$, $u_1 \in \ker (B_{N,\delta})$. This means \[0=\<Q \nabla u_1, \nabla u_1\>_\Omega +\<\delta u_1, u_1\>_\Gamma=\|\sqrt Q \nabla u_1\|_\Omega^2+\|\sqrt\delta \tr u_1\|^2\]
and shows $\nabla u_1=0$, whence $u_1$ is constant (and, therefore, also $u_2=\tr u_1$). Moreover, $\int_\Gamma \delta |\tr u_1|^2 \mathrm{d}S=0$.

(ii) If $\delta=0$, naturally also the converse holds as $B_N \one_\Omega=-\Div Q \nabla \one_\Omega=0$ and the constant functions satisfy the Neumann boundary condition, which shows $\lambda_1=0.$ 

(iii) If  $\gamma=0$, $\delta>0$ on a set of positive surface measure $\Gamma_0$, due to (i), $u_1$ is still constant. But now, we also find a set of positive measure $\Gamma_\eps  \subseteq \Gamma$ where $\delta > \eps$.  However, if we had $u = c \neq 0$ by (i), this would yield \[0=\int_\Gamma \delta |\tr u_1|^2 \mathrm{d}S \geq \int_{\Gamma_\eps} \delta |\tr u_1|^2 \mathrm{d}S\geq \eps c^2 S(\Gamma_\eps) > 0.\] 
Analogously, if $\gamma>0$ on a set of positive measure and $ \uu \in \ker (\Aa_\delta)$, we calculate
\begin{align*}
 0= \<\Aa_\delta \uu,\uu\>_{\mathscr{H}}=\a(\uu,\uu) &=\int_\Omega \alpha |\Delta u_1|^2 \mathrm{d}x+\int_\Gamma \beta^{-1}\gamma |u_2|^2 \dx S \\
 &\geq \int_{\Gamma_\eps} \beta^{-1}\gamma c^2\, \mathrm{d}S  \geq \|\beta\|^{-1}_\infty \eps c^2 S(\Gamma_\eps)>0,   
\end{align*}

another contradiction.
\smallskip

(iv) Plugging $(\one_\Omega,\one_\Gamma)\in D(\a_\delta)$ into the usual Rayleigh quotient, we obtain a negative value as
$\int_\Gamma \beta^{-1} \gamma \, \dx S < 0$, and thus $\lambda_1 < 0$.
\end{proof}
This yields the following asymptotic behavior of the semigroup $\Tt$.

\begin{satz}\

\begin{enumerate}[\upshape(i)]
\item If $\gamma = 0$, $\delta=0$ almost everywhere, then $\|\Tt(t)\ff - \bar\ff\|_\Hh\leq e^{-\lambda_2 t}\|\ff\|_{\Hh}$ for all $\ff\in\Hh$, where
\[
\bar \ff \coloneqq \frac{1}{\lambda_{\dm}(\Omega)+\int_\Gamma \beta^{-1}\mathrm{d}S}\left(\int_\Omega f_1  \mathrm{d}x+\int_\Gamma \beta^{-1}f_2  \mathrm{d}S\right)(\one_\Omega,\one_\Gamma),
\]
and $\lambda_2>0$ is the second eigenvalue of $\Aa$.
\item If $\gamma, \delta \geq 0$ and $\gamma>0$ or $\delta > 0$ on a set of positive measure, then $\|\Tt(t)\ff \|_\Hh\leq e^{-\lambda_1 t}\|\ff\|_{\Hh}$ holds for all $\ff\in\Hh$. Thus, in this case, the semigroup $\Tt$ is exponentially stable.
\item If $\int_\Gamma \gamma \, \dx S < 0$, then $\|\Tt(t)\| = e^{-\lambda_1 t} \to \infty$ as $t\to \infty$.
\end{enumerate}
\end{satz}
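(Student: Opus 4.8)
The plan is to deduce all three claims from the spectral representation~\eqref{sgdecomp} of Corollary~\ref{c.spectrum} together with the identification of $\ker(\Aa_\delta)$ and the sign of the bottom eigenvalue $\lambda_1$ supplied by Lemma~\ref{kernel}. Write $(\ee_n)_n$ for the orthonormal eigenbasis of $\Hh$ with $\Aa_\delta\ee_n=\lambda_n\ee_n$ and $\lambda_1\le\lambda_2\le\cdots\to\infty$. The workhorse throughout is Parseval's identity, which gives $\|\Tt(t)\ff\|_\Hh^2=\sum_k e^{-2\lambda_k t}|\langle\ff,\ee_k\rangle_\Hh|^2$ for every $\ff\in\Hh$.

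Parts (ii) and (iii) are then immediate. For (ii), Lemma~\ref{kernel}~(iii) gives $\lambda_1>0$, hence $e^{-2\lambda_k t}\le e^{-2\lambda_1 t}$ for all $k$, and the Parseval identity yields $\|\Tt(t)\ff\|_\Hh\le e^{-\lambda_1 t}\|\ff\|_\Hh$; since $\lambda_1>0$ this is exponential stability. For (iii), Lemma~\ref{kernel}~(iv) gives $\lambda_1<0$; as $\Aa_\delta$ is self-adjoint and bounded below with smallest eigenvalue $\lambda_1$, the operator norm of $\Tt(t)=e^{-t\Aa_\delta}$ equals $\sup_k e^{-\lambda_k t}=e^{-\lambda_1 t}$ (attained on $\ee_1$), which tends to $\infty$ as $t\to\infty$ because $-\lambda_1>0$.

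For (i) the one extra ingredient is the explicit form of the orthogonal projection onto $\ker(\Aa_\delta)$. By Lemma~\ref{kernel}~(ii), when $\gamma=\delta=0$ we have $\lambda_1=0$ with the \emph{one-dimensional} kernel $\mathrm{span}(\one_\Omega,\one_\Gamma)$, so $\lambda_2>0$ strictly, and I may take $\ee_1=\kappa^{-1}(\one_\Omega,\one_\Gamma)$ with $\kappa^2=\|(\one_\Omega,\one_\Gamma)\|_\Hh^2=\lambda_\dm(\Omega)+\int_\Gamma\beta^{-1}\,\mathrm{d}S$, using the definition~\eqref{prodH} of $\langle\cdot,\cdot\rangle_\Hh$. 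A short computation then shows $\langle\ff,\ee_1\rangle_\Hh\,\ee_1$ equals exactly the function $\bar\ff$ in the statement. Subtracting off this $k=1$ term of~\eqref{sgdecomp} leaves $\Tt(t)\ff-\bar\ff=\sum_{k\ge 2}e^{-\lambda_k t}\langle\ff,\ee_k\rangle_\Hh\ee_k$, whence Parseval and $\lambda_k\ge\lambda_2$ for $k\ge 2$ give $\|\Tt(t)\ff-\bar\ff\|_\Hh\le e^{-\lambda_2 t}\|\ff\|_\Hh$.

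I do not anticipate a genuine obstacle. The only point requiring an actual (elementary) calculation is the bookkeeping identity $\langle\ff,\ee_1\rangle_\Hh\,\ee_1=\bar\ff$ — that is, writing out the projection onto the constants with respect to the weighted product inner product — and the only subtlety is that the bounds in (i) and (ii) are meaningful precisely because of the \emph{strict} spectral gaps $\lambda_2>0$ (resp.\ $\lambda_1>0$) that Lemma~\ref{kernel} provides.
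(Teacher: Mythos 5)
Your proposal is correct and follows essentially the same route as the paper: the spectral decomposition~\eqref{sgdecomp}, Parseval's identity, and the information on $\lambda_1$ and $\ker(\Aa_\delta)$ from Lemma~\ref{kernel}, with $\bar\ff$ identified as the projection onto the normalized constant eigenfunction. If anything, your bookkeeping is slightly more careful than the paper's displayed computation, which drops a factor of $2$ in the exponents when applying Parseval.
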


\begin{proof}
For (i) observe that in this case $\lambda_1=0$ and $\bar \ff = e^{-\lambda_1 t} \la \ff, \ee_1\ra_{\Hh}\ee_1$ in view of Lemma~\ref{kernel}. Thus
\eqref{sgdecomp} and Parseval's identity yield
\[
\|\Tt(t)\ff - \bar \ff\|_\Hh^2= \sum_{k=2}^\infty e^{-\lambda_k t}|\la \ff, \ee_k\ra_{\Hh}|^2 \leq
\sum_{k=2}^\infty e^{-\lambda_2 t}|\la \ff, \ee_k\ra_{\Hh}|^2 \leq e^{-\lambda_2 t}\|f\|^2_{\Hh}.
\]
This proves (i). In case (ii), we have $\lambda_1>0$ (see again Lemma~\ref{kernel}), and (ii) follows by a similar computation.
(iii) follows by considering an eigenvalue corresponding to the eigenvalue $\lambda_1$.
\end{proof}
\begin{bemerkung}\label{IdH}
For our following positivity investigations, we consider the Hilbert lattice $H=L^2(\Omega \,\dot\cup\, \Gamma,\mu)$, where $\mu$ is given by $\mu(A)=\int_{A\cap \Omega} \one \,\mathrm{d}\lambda+\int_{A \cap \Gamma} \beta^{-1}\mathrm{d}S$, with positivity cone \begin{equation}\label{cone} H^+\coloneqq\{u \in H ~| ~ u(x)\geq 0 ~\mu\text{-a.e} \}.
\end{equation} Once can easily show that $H$ can be identified with our Hilbert space $\Hh=L^2(\Omega)\times L^2(\Gamma,\beta^{-1}\mathrm{d}S)$ from Definition~\ref{DefForm}~(i), and $u$ with $\uu=(u_1,u_2).$    
\end{bemerkung}

\begin{proposition}\label{nonpos}
Let $\gamma\geq 0$, then the semigroup $\Tt$ generated by $-\Aa$ is real, but neither positive nor $L^\infty$-contractive.
\end{proposition}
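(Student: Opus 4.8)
The plan is to handle the three assertions separately, in each case via the Beurling--Deny--Ouhabaz criteria for the closed, symmetric form $\a$, which is accretive because $\alpha\ge\eta>0$ and $\gamma\ge 0$ (cf.\ the proof of Theorem~\ref{GenOP2}~(i)). Throughout I would work in the realisation of $\Hh$ as the Hilbert lattice $L^2(\Omega\,\dot\cup\,\Gamma,\mu)$ from Remark~\ref{IdH}, so that ``positive'' and ``$L^\infty$-contractive'' refer to this lattice. For the realness I would simply note that $\alpha,\beta,\gamma$ and the symmetric matrix $Q$ are all real, hence $\b$ is a real form, and $B_N$, $\NN^\b=-\partial_\nu^Q$ (Theorem~\ref{SatzA1}) map real functions to real functions and have conjugation-invariant domains; consequently $D(\a)$ is conjugation-invariant and $\overline{\a(\uu,\vv)}=\a(\overline{\uu},\overline{\vv})$, so that $\Aa$ commutes with complex conjugation and hence so does $\Tt(t)=e^{-t\Aa}$ (see also \cite[Chapter~2]{Ouh09}).

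For non-positivity the decisive point I would exploit is that membership in $D(\a)$ forces the first component into $D(B_N)\subseteq H^{3/2}_B(\Omega)\subseteq H^{3/2}(\Omega)$ by Theorem~\ref{SatzA1}~(iv), whereas $H^{3/2}(\Omega)$ is not stable under taking positive parts. Concretely I would fix $x_0$ and $r>0$ with $\overline{B_r(x_0)}\subseteq\Omega$, a unit vector $e$, and a cut-off $\chi\in C_c^\infty(B_r(x_0))$ with $\chi\equiv 1$ on $B_{r/2}(x_0)$, and put $\phi\coloneqq\chi\cdot\langle\,\cdot-x_0,e\rangle\in C_c^\infty(\Omega)\subseteq D(B_N)$, the inclusion being Lemma~\ref{Lem:Sett}. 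Then $\uu\coloneqq(\phi,\tr\phi)=(\phi,0)\in D(\a)$, while its lattice positive part is $\uu^+=(\phi^+,0)$; on $B_{r/2}(x_0)$ one has $\phi^+=\langle\,\cdot-x_0,e\rangle^+$, whose gradient $\one_{\{\langle\,\cdot-x_0,e\rangle>0\}}e$ has a jump discontinuity across the hyperplane $\{\langle x-x_0,e\rangle=0\}$, so that $\phi^+\notin H^{3/2}(\Omega)$ and hence $\uu^+\notin D(\a)$. By Ouhabaz's invariance criterion \cite[Chapter~2]{Ouh09}, the positive cone $H^+$ cannot be invariant under $\Tt$, i.e.\ $\Tt$ is not positive.

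For the failure of $L^\infty$-contractivity I would argue in exactly the same way with the truncation $\uu\mapsto\bigl(\mathrm{med}(-1,u_1,1),\mathrm{med}(-1,u_2,1)\bigr)$ onto the order interval $[-\one,\one]$: if $\Tt$ were $L^\infty$-contractive it would leave $[-\one,\one]$ invariant, hence the truncation would have to preserve $D(\a)$. Here I would choose $\psi\in C_c^\infty(\Omega)$ supported in $\overline{B_r(x_0)}$, radial about $x_0$, with $\psi(x_0)=2$, $0\le\psi\le 2$, and strictly radially decreasing on the annulus where $0<\psi<2$, so that $S\coloneqq\{\psi=1\}$ is a sphere on which $\nabla\psi\neq 0$. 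Then $\uu\coloneqq(\psi,0)\in D(\a)$, its truncation is $\bigl(\min(\psi,1),0\bigr)$, and $\min(\psi,1)$ is the constant $1$ on one side of $S$ and the non-constant smooth function $\psi$ on the other, so its gradient jumps across $S$; thus $\min(\psi,1)\notin H^{3/2}(\Omega)$, the truncation of $\uu$ is not in $D(\a)$, and $\Tt$ cannot be $L^\infty$-contractive.

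The only genuinely analytic ingredient is the claim that $\phi^+$ (resp.\ $\min(\psi,1)$) of a transversally sign-changing (resp.\ transversally level-$1$-crossing) smooth function fails to belong to $H^{3/2}$, and this is where I expect the only real work to sit; it is however a standard fact, reducible by localisation, a rotation, and Fubini to the one-dimensional statement that a function with a jump of its first derivative at a point lies in $H^s$ only for $s<3/2$ (equivalently, $\one_{(0,1)}\notin H^{1/2}(\mathbb{R})$). Everything else is a bookkeeping application of results already available — chiefly the $H^{3/2}$-regularity of $D(B_N)$ from Theorem~\ref{SatzA1}~(iv), the inclusion $C_c^\infty(\Omega)\subseteq D(B_N)$ from Lemma~\ref{Lem:Sett}, and Ouhabaz's invariance theorem for closed convex sets.
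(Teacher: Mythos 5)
Your proposal is correct and takes essentially the same route as the paper, whose proof simply delegates to \cite[Lemma~3.5]{Denk-Kunze-Ploss21}: there, too, non-positivity and failure of $L^\infty$-contractivity are obtained from Ouhabaz's invariance criteria by exhibiting test functions whose positive part (resp.\ unit truncation) leaves the form domain, the obstruction being exactly the $H^{3/2}$-regularity forced on the first component. Your choice of compactly supported counterexamples is also precisely what makes the argument insensitive to the Neumann-versus-Robin distinction, which is the one point the paper's proof explicitly records.
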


\begin{proof}
The same arguments as in \cite[Lemma~3.5]{Denk-Kunze-Ploss21} work as Robin and Neumann boundary conditions are equal for test functions.
\end{proof}
Again however, we may show that the semigroup $\Tt$ is \emph{eventually} positive in the sense of  \cite{DGK16b}, \cite{DGK16a} and \cite{DG18}, as the critical ingredient is the Hölder continuity established in the previous section also for the variable coefficient case. We recall a simplified version of the definition from \cite[Section~1]{DG18} and the used criterion.  
\begin{definition}\label{EvPos}
Let $(\Omega,\mu)$ be a $\sigma$-finite measure space and $T$ a real $C_0$-semigroup on the space $H=L^2(\Omega,\mu)$ with positivity cone $H^+$ given in the sense of~\eqref{cone}. Then, $T$ is called \emph{eventually positive} if there is some time $t_0>0$ such that for all $f \in H_+ \setminus\{0\}$ and $t \geq t_0$ there is some $\eps>0$ for which $T(t)f \geq \eps$ holds $\mu$-almost everywhere. 
\end{definition}

\begin{satz}\label{CritEvP}
    Let $(\Omega,\mu)$ be a $\sigma$-finite measure space and $T$ a real $C_0$-semigroup generated by a self-adjoint operator $A$ on $H=L^2(\Omega,\mu)$. If $D(A^\infty) \subseteq L^\infty(\Omega,\mu)$, then the following assertions are equivalent:
    \begin{enumerate}[{\upshape(i)}]
        \item $T$ is eventually positive.
        \item $\ker(s(A)-A)$ is one-dimensional and contains a vector $v$ such that $v \geq \eps$ holds $\mu$-almost everywhere for some $\eps>0$.
    \end{enumerate}  
\end{satz}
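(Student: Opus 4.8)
The plan is to obtain this as an instance of the abstract Perron--Frobenius-type characterization of eventually positive semigroups due to Daners and Glück, i.e.\ to invoke \cite{DG18} (see also \cite{DGK16b}); below I sketch why it applies, the hypotheses above being exactly the ones it requires.

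For $(ii)\Rightarrow(i)$: first I would shift, replacing $A$ by $A-s(A)$ and $T$ by $\bigl(e^{-s(A)t}T(t)\bigr)_{t\ge 0}$, which changes neither the eventual positivity of $T$ (we only multiply by the positive scalar $e^{-s(A)t}$) nor the spectral picture, so we may assume $s(A)=0$ and $\ker A=\operatorname{span}\{v\}$ with $v\ge\eps$ $\mu$-a.e. Since $A$ is self-adjoint, the orthogonal projection $P$ onto $\ker A$ is the spectral projection at $0$ and commutes with the functional calculus; moreover $Pf=\tfrac{\la f,v\ra}{\|v\|^2}v\ge 0$ with a \emph{strictly} positive coefficient whenever $0\ne f\in H^+$. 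The role of the hypothesis $D(A^\infty)\subseteq L^\infty(\Omega,\mu)$ is to turn the $L^2$-decay $T(t)f\to Pf$ --- valid once one knows, as \cite{DG18} shows, that $0$ is isolated in $\sigma(A)$ with a spectral gap $\sup\bigl(\sigma(A)\setminus\{0\}\bigr)=:-\omega<0$ --- into a quantitative $L^\infty$-estimate: for $f\in D(A^\infty)$ one factors $T(t)f-Pf=T(t-1)\bigl(T(1)(I-P)f\bigr)$ and combines the smoothing with the gap to bound $\|T(t)f-Pf\|_{L^\infty}$ by $Ce^{-\omega t}$ times a graph-type norm of $f$. For $t$ large this exponentially small remainder is dominated by the lower bound on $Pf$, so $T(t)f\ge\eps'$ $\mu$-a.e.; density of $D(A^\infty)$ together with the uniformity of the constants then extends this to every $0\ne f\in H^+$, which is eventual positivity in the sense of Definition~\ref{EvPos}.

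For $(i)\Rightarrow(ii)$: assuming $T$ eventually positive, the smoothing hypothesis shows that for large $t$ the operator $T(t)$ maps $L^2$ into $L^\infty$ and, by self-adjointness and the semigroup law, is represented by a bounded integral kernel; eventual positivity then makes $T(t)$ positivity improving with kernel bounded below by a positive constant on the positive cone. A Jentzsch/Krein--Rutman argument yields that the spectral radius of $T(t)$ is a simple eigenvalue with a strictly positive eigenfunction, and translating this back through the spectral theorem gives that $s(A)$ is an eigenvalue, $\dim\ker(s(A)-A)=1$, and a spanning eigenvector $v$ satisfies $v\ge\eps$ $\mu$-a.e.\ for some $\eps>0$. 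The hard part --- and the reason one cites the result rather than reproving it --- is precisely this technical bridge: converting the abstract inclusion $D(A^\infty)\subseteq L^\infty$ into the eventual $L^2\to L^\infty$ boundedness, the integral-kernel representation, and the isolation of $s(A)$ that make the pointwise positivity arguments run. All of this is done in \cite{DGK16b,DG18}, so in our situation it suffices to note that the standing hypotheses --- self-adjointness of $A$, realness of $T$, and $D(A^\infty)\subseteq L^\infty(\Omega,\mu)$ --- are exactly those assumed there, and to apply the theorem.
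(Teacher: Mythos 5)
Your proposal takes exactly the route the paper does: Theorem~\ref{CritEvP} is stated there without proof as a recalled, simplified version of the Daners--Gl\"uck criterion, with the reader referred to \cite{DG18} (and \cite{DGK16b}), which is precisely your reduction. Your additional sketch of the two implications is consistent with the arguments in those references, so there is nothing further to reconcile.
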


\begin{satz}\label{EventPos}
Let $\gamma=\delta=0$. Then the semigroup $\Tt$ is eventually positive in the sense of Definition~\ref{EvPos}.
\end{satz}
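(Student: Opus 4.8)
The plan is to apply the abstract criterion of Theorem~\ref{CritEvP} with the self-adjoint operator $A=-\Aa$ acting on the Hilbert lattice $H=L^2(\Omega\,\dot\cup\,\Gamma,\mu)$ from Remark~\ref{IdH}, which we identify with $\Hh$ and under which $\uu$ corresponds to $(u_1,u_2)$. Note first that, since $\delta=0$, we have $\Aa_\delta=\Aa$, so all earlier statements about $\Aa_\delta$ specialize to $\Aa$. Three items have to be verified: that $\Tt$ is a real $C_0$-semigroup generated by a self-adjoint operator; that $D(A^\infty)\subseteq L^\infty(\Omega\,\dot\cup\,\Gamma,\mu)$; and that the eigenspace of $A$ associated with its spectral bound $s(A)$ is one-dimensional and spanned by a vector that is bounded below by a positive constant $\mu$-almost everywhere.

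For the first item I would simply recall that by Theorem~\ref{GenOP2}~(i) the operator $\Aa$ is self-adjoint, hence so is $A=-\Aa$, and that $A$ generates the analytic semigroup $\Tt$; realness of $\Tt$ is Proposition~\ref{nonpos} (applied with $\gamma=0\ge 0$). For the $L^\infty$-embedding I would invoke Theorem~\ref{HoelReg}: for every $\uu=(u_1,u_2)\in D(\Aa^\infty)=D(A^\infty)$ the function $u_1$ lies in $C^{0,\vartheta}(\Omega)$, hence extends to a continuous, and thus bounded, function on $\overline\Omega$, and $u_2=\tr u_1$ is the boundary trace of that continuous function, so $u_2\in L^\infty(\Gamma)$ as well; therefore $\uu\in L^\infty(\Omega\,\dot\cup\,\Gamma,\mu)$, i.e.\ $D(A^\infty)\subseteq L^\infty(\Omega\,\dot\cup\,\Gamma,\mu)$.

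Finally, for the spectral picture: by Corollary~\ref{c.spectrum} the spectrum of $\Aa$ consists of eigenvalues $\lambda_n\uparrow\infty$, and Lemma~\ref{kernel}~(ii) gives $\lambda_1=0$ together with $\ker(\Aa)={\rm span}(\one_\Omega,\one_\Gamma)$. Consequently the spectral bound of $A=-\Aa$ is $s(A)=-\lambda_1=0$, and $\ker(s(A)-A)=\ker(\Aa)$ is one-dimensional, spanned by $(\one_\Omega,\one_\Gamma)$, which under the identification of Remark~\ref{IdH} is the constant function $\one$ on $\Omega\,\dot\cup\,\Gamma$ and hence satisfies $\one\ge 1$ $\mu$-almost everywhere. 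Thus assertion (ii) of Theorem~\ref{CritEvP} holds, and the theorem yields that $\Tt$ is eventually positive in the sense of Definition~\ref{EvPos}. I do not expect a genuine obstacle in this argument: the only substantial ingredient is the $L^\infty$-regularity of $D(\Aa^\infty)$, which is precisely the content of Theorem~\ref{HoelReg}, while the remaining steps merely assemble the spectral information already collected in Lemma~\ref{kernel} and Corollary~\ref{c.spectrum}.
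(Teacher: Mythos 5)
Your proposal is correct and follows essentially the same route as the paper: apply Theorem~\ref{CritEvP} with $A=-\Aa$ on the lattice of Remark~\ref{IdH}, using Proposition~\ref{nonpos} for realness, Theorem~\ref{GenOP2} for self-adjointness, Theorem~\ref{HoelReg} for the embedding $D(\Aa^\infty)\subseteq L^\infty$, and Lemma~\ref{kernel}~(ii) for the one-dimensional kernel spanned by $(\one_\Omega,\one_\Gamma)$ with $s(-\Aa)=0$. Your extra remark that $u_2=\tr u_1$ inherits boundedness from the H\"older-continuous representative of $u_1$ is a correct, slightly more explicit version of the same step.
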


\begin{proof}
We apply Theorem~\ref{CritEvP} for $H$ defined as in Remark~\ref{IdH} and $A=-\Aa$. As $\beta, \beta^{-1}$ are bounded, $L^\infty(\Omega\,\dot\cup\, \Gamma,\mu)$ can be identified with $L^\infty(\Omega) \times L^\infty(\Gamma).$
$\Tt$ is real as a consequence of Proposition~\ref{nonpos}, and the operator $\Aa$ is self-adjoint due to the symmetry of the form (see Theorem~\ref{GenOP2}). Finally, we have that $D(\Aa^\infty)$ embeds into $L^\infty(\Omega)\times L^\infty(\Gamma)$ by Theorem~\ref{HoelReg}.
Now, to deduce eventual positivity, we only have to verify assertion (ii) of Theorem~\ref{CritEvP}. But this a direct consequence of Lemma~\ref{kernel}~(ii) that yields $s(-\Aa)=\lambda_1=0$ as $\gamma=0$, $\delta=0$, and $(\one_\Omega,\one_\Gamma) \in \ker(\Aa).$ \qedhere \end{proof}

For a counterexample for eventual positivity in the case $\gamma>0$, $\delta=0$, $Q=I_\d$ we refer the reader to \cite[Section~7]{Denk-Kunze-Ploss21}.

\section{Application to systems of higher order}
In this short excursion, we point out that the abstract theory established in Chapter~\ref{Abstract} is not necessarily restricted to problems of order 4, though identification of the abstract operators and regularity characterizations can be more difficult.
\begin{proposition}
 Let $\Omega \subset \R^d$ be a bounded Lipschitz domain, $H=L^2(\Omega)$, and let $\Delta_N$ denote the Neumann Laplacian. For $k \in \N$ consider the form 
$\b_k=\<\Delta^k u,\Delta^k v\>_\Omega$ on $D(\b_k)=D(\Delta_N^k)$. Then $\b_k$ is admissible and $B_N=\Delta_N^{2k},$ whence also $C_c^\infty(\Omega) \subset D(B_N)\cap D(B_D)$ is satisfied, and we are in the situation of Theorem~\ref{GenOP2} and Remark~\ref{OrgSys}.
\end{proposition}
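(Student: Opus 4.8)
The plan is to check the three defining conditions of Definition~\ref{DefSubForm} for $\b_k$, identify the associated operator, and finally verify the test-function inclusion. Throughout I would work with the spectral resolution of $\Delta_N$: since $\Omega$ is a bounded Lipschitz domain, $\Delta_N$ is a nonnegative self-adjoint operator with compact resolvent, so $L^2(\Omega)$ admits an orthonormal basis $(\phi_n)_n$ of eigenfunctions, $\Delta_N\phi_n=\mu_n\phi_n$ with $0=\mu_0\le\mu_1\le\dots\to\infty$, and $\Delta_N^k$ (for $k\ge 1$) is the nonnegative self-adjoint operator obtained by functional calculus, with $D(\Delta_N^k)=\{u:\sum_n\mu_n^{2k}|\langle u,\phi_n\rangle_\Omega|^2<\infty\}$ and $(\Delta_N^{2k})^{1/2}=\Delta_N^k$. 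Because $\b_k(u,v)=\langle\Delta_N^k u,\Delta_N^k v\rangle_\Omega$ on $D(\b_k)=D(\Delta_N^k)=D((\Delta_N^{2k})^{1/2})$, the form $\b_k$ is exactly the form canonically associated with the nonnegative self-adjoint operator $\Delta_N^{2k}$; standard form theory then yields at once that $\b_k$ is symmetric, accretive (so $\lambda_{\b_k}=0$ and $\|u\|_{\b_k}^2=\|\Delta_N^k u\|_\Omega^2+\|u\|_\Omega^2$), closed (its form norm is the graph norm of the closed operator $\Delta_N^k$), continuous (Cauchy--Schwarz), and densely defined (the span of the $\phi_n$ lies in $D(\Delta_N^k)$); hence $\b_k$ is generating, and its associated operator is $B_N=\Delta_N^{2k}$.

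It remains to produce, for some $\rho\in(0,1)$, the chain $C_c^\infty(\Omega)\subseteq(D(\b_k),\|\cdot\|_{\b_k})\subseteq H^{1/2+\rho}(\Omega)$ with the right-hand embedding continuous and dense; I would take $\rho=1/2$, i.e.\ the target $H^1(\Omega)$. For $C_c^\infty(\Omega)\subseteq D(\b_k)$: any $\phi\in C_c^\infty(\Omega)$ satisfies $\phi\in D(\Delta_N)$ with $\Delta_N\phi=-\Delta\phi\in C_c^\infty(\Omega)$ (integration by parts produces no boundary term), so inductively $\phi\in D(\Delta_N^m)$ for every $m$, in particular $\phi\in D(\Delta_N^k)$. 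For continuity of the embedding, the moment inequality $\|\Delta_N u\|_\Omega\le\|\Delta_N^k u\|_\Omega^{1/k}\|u\|_\Omega^{1-1/k}\le\varepsilon\|\Delta_N^k u\|_\Omega+C_\varepsilon\|u\|_\Omega$ shows that $\|\cdot\|_{\b_k}$ dominates the graph norm of $\Delta_N$, and Theorem~\ref{SatzA1}~(iv) applied with $Q=\id$ (which satisfies Hypothesis~\ref{HypQ}) gives $D(\Delta_N)\subseteq H^{3/2}_\Delta(\Omega)\hookrightarrow H^{3/2}(\Omega)\hookrightarrow H^1(\Omega)$. For density, the span of the $\phi_n$ is contained in $D(\Delta_N^k)$ and is dense in the form domain $H^1(\Omega)$ of $\Delta_N$ (truncating the spectral expansion converges in the form norm, which is equivalent to $\|\cdot\|_{H^1(\Omega)}$). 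This establishes admissibility of $\b_k$.

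Finally, for $C_c^\infty(\Omega)\subseteq D(B_N)\cap D(B_D)$: the inclusion into $D(B_N)=D(\Delta_N^{2k})$ is the case $m=2k$ of the containment above. For $D(B_D)$, fix $\phi\in C_c^\infty(\Omega)$; then $\tr\phi=0$, so $\phi\in D(\b_{k,D})$, and for every $v\in D(\b_{k,D})\subseteq D(\Delta_N^k)$, using $\Delta_N^k\phi\in C_c^\infty(\Omega)\subseteq D(\Delta_N^k)$ together with the self-adjointness of $\Delta_N^k$,
\[ \b_{k,D}(\phi,v)=\langle\Delta_N^k\phi,\Delta_N^k v\rangle_\Omega=\langle\Delta_N^{2k}\phi,v\rangle_\Omega, \]
so $\phi\in D(B_D)$ with $B_D\phi=\Delta_N^{2k}\phi\in L^2(\Omega)$. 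Since $C_c^\infty(\Omega)$ is dense in $L^2(\Omega)$, $D(B_N)\cap D(B_D)$ is dense, whence $\b_k$ may be used as the subsidiary form $\b$ in Definitions~\ref{DefN-NN} and~\ref{DefForm} (choosing, say, $\alpha\equiv1$, $\beta\equiv1$, $\gamma\equiv0$, $\delta\equiv0$, which satisfy Hypothesis~\ref{hyp1.1}), and Proposition~\ref{Prop:Dense} together with Theorems~\ref{GenOP1} and~\ref{GenOP2} and Remark~\ref{OrgSys} apply. The only input that is not bookkeeping with the spectral theorem and integration by parts is the $H^{3/2}$-regularity of $D(\Delta_N)$ on Lipschitz domains, but this is already at our disposal through Theorem~\ref{SatzA1}~(iv) (equivalently Lemma~\ref{cono}~(iii) with $Q=\id$), so I anticipate no genuine obstacle.
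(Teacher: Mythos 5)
Your proof is correct, and while its skeleton (admissibility, identification of $B_N$, test-function inclusion) matches the paper's, the key steps are carried out by genuinely different means. For the identification $B_N=\Delta_N^{2k}$ you invoke the second representation theorem: since $(\Delta_N^{2k})^{1/2}=\Delta_N^k$ by functional calculus, $\b_k$ \emph{is} the canonical form of the nonnegative self-adjoint operator $\Delta_N^{2k}$, so its associated operator is $\Delta_N^{2k}$ with no further work. The paper instead argues by hand: it shows $\Delta_N^{2k}\subseteq B_N$ and $B_N\subseteq(\Delta_N^{2k})^{*}$ via a distributional computation against test functions and then squeezes using self-adjointness of $B_N$; your route is shorter and avoids that sandwich. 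For the continuous embedding $D(\Delta_N^k)\hookrightarrow D(\Delta_N)$ you use the moment inequality for fractional powers where the paper uses the closed graph theorem, and for density of $D(\Delta_N^k)$ in $H^1(\Omega)$ you truncate the spectral expansion where the paper cites the form-core argument of Ouhabaz together with analyticity of the semigroup; these are interchangeable. Two small remarks: for the last link $D(\Delta_N)\hookrightarrow H^1(\Omega)$ the paper's elementary estimate $\|\nabla u\|_\Omega^2=\langle\Delta_N u,u\rangle_\Omega\le\|\Delta_N u\|_\Omega\|u\|_\Omega$ suffices, so routing through the $H^{3/2}$-regularity of Theorem~\ref{SatzA1}~(iv) is more machinery than needed (though not wrong); and your direct verification that $\phi\in D(B_D)$ could be replaced by the one-line appeal to Proposition~\ref{b0gen}, since $\tr\phi=0$ and $\phi\in D(B_N)$ already give $\phi\in D(B_D)\cap D(B_N)$.
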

\begin{proof}
The form $\b_k$ is symmetric and accretive by default. We have $\|u\|^2_{\b_k}=\|u\|^2_\Omega+\|\Delta^k u\|_\Omega^2.$ As $\Delta_N^k$ is a closed operator. $D(\Delta_N^k)$ is a Hilbert space with respect to the graph norm and hence the form is closed. The continuity follows from Cauchy--Schwarz's inequality. Thus $\b_k$ is generating and its associated operator $B_N$ is self-adjoint.
We have $C_c^\infty(\Omega) \subseteq D(\Delta_N^k) \subseteq D(\Delta_N) \subseteq H^1(\Omega)$ and may choose $\rho=1/2$ in \eqref{TestDense}. For the last embedding we calculate  
$\|u\|^2_{H^1(\Omega)}=\<\Delta u,u\>_\Omega\leq C( \|u\|_\Omega^2+\|\Delta u\|_\Omega^2).$ 
That $D(\Delta^k_N)$ is continuously embedded in $D(\Delta^N)$ follows by the closed graph theorem applied to the identity id: $D(\Delta^N_k) \rightarrow D(\Delta_N)$. $D(\Delta_N^k)$ is also dense in $H^1(\Omega)$ due to \cite[Proof of Lemma 1.25]{Ouh09} and the analyticity of the semigroup. We begin by showing that $B_N$ coincides with $\Delta^{2k}$ distributionally.
Let $u \in D(B_N)$ and $v \in C_c^\infty(\Omega) \subset D(\Delta_N^{2k})$, then $\<f,\phi\>=\<\Delta_N^k u, \Delta_N^k \phi\>=\<(\Delta_N^k)^* u, \Delta_N^k \phi\>=\<u,\Delta^{2k} \phi\>$ as $\Delta^k \phi \in D(\Delta_N^k).$ Hence $f=\Delta^{2k} u$ as $\Delta_N$ and thus $\Delta_N^k$ is self-adjoint. So $D(B_N) \subset \{u \in D(\Delta_N^k)~|~ \Delta_N^{2k}u \in L^2(\Omega)\}.$ On the other hand $D(\Delta_N^{2k})\subset D(B_N)$ as for the same reason for $u \in D(\Delta_N^{2k})$ the relation $\<\Delta_N^{2k} u, v\>=\<\Delta_N^k u, \Delta_N^k v\>$ holds for all $v \in D(\Delta_N^k)$. So $\Delta^{2k}_N \subset B_N =B_N^* \subset (\Delta_N^{2k})^*=\Delta_N^{2k}.$  
\end{proof}
Now we obtain $D(B_0)=D(B_D)\cap D(B_N)=D(B_N)\cap H_0^1(\Omega)$ by Proposition~\ref{b0gen}. Thus $D(B_0)=\{u \in D(\Delta_N^{2k})~|~ \tr u=0\}.$
Now for $u \in D(B_0^*)$, there is a $f \in L^2(\Omega)$ such that for all $v \in D(B_0)$ we have $\<u, B_0 v\>=\<u, \Delta^{2k}v\>=\<f,v\>_\Omega$, so $B_0^*$ is a restriction of the maximal $L^2$-realization of the distribution $\Delta^{2k}$. 

Hence (taking $\alpha=\beta=1, \gamma=\delta=0$ in Remark~\ref{OrgSys}) we can find a solution of the system 
\begin{alignat}{4}
  \partial_t u   +\Delta^{4k}u & =0 && \textnormal{ in }   (0,\infty)\times \Omega,\label{D10-1''}\\
  \tr \Delta^{4k}u +\NN^{\b}(\Delta^{2k}) u   & = 0
   && \textnormal{ on }  (0,\infty)\times \Gamma,\label{D10-2''}\\
  \NN^{\b}u  & = 0  && \textnormal{ on }   (0,\infty)\times \Gamma,\label{D10-3''}\\
  u |_{t=0} & = u_0   && \textnormal{ in } \Omega,\label{D10-4''}
  \end{alignat}

  where $N_\b$ is given by the abstract definition
\begin{align*}
    D(\NN^{\b})\coloneqq&\{u \in D(B_0^*)~|~\\
    &\hphantom{spacesp} \exists g \in L^2(\Gamma) \forall v \in D(\Delta_N^{2k}): \<\Delta^{2k}u,v\>_\Omega-\<u,\Delta^{2k} v\>_\Omega=\<g, \tr v\>_\Gamma\}
\end{align*}
and $\NN^\b=g$.
The identification of $\NN$ is more difficult here, but some progress can be made
using the theory of quasi-boundary triples (cf.\ \cite{BM14}, \cite[Chapter~8.6]{BHS20}).

For all $u \in H^0_\Delta(\Omega)$ and $v \in D(\Delta_N)$ we have
\begin{align}\label{GreenGen}
    \<\Delta u, v\>_\Omega-\<u, \Delta v\>_\Omega=\<\tilde \tau_N u, \tr v\>_{\Gg_0' \times \Gg_0},
\end{align}
where $\tilde \tau_N$ is the extension of the Neumann trace from $H^0_\Delta(\Omega)$ to the space $\Gg_0'$ introduce there, which is the dual space of $\tr D(\Delta_N)$ equipped with a Hilbert space structure.
For this version of Green's formula also see \cite[Proposition~2.4]{Denk-Kunze-Ploss21}. 
This allows us to characterize $\NN^\b$ at least on a subset of $D(B_0^*)$. 
Let $$V=\{u \in L^2(\Omega) ~|~ \Delta^{2k} u, \Delta^{2k-1} u \in L^2(\Omega), \Delta^{j}u \in \ker{\tilde \tau_N}, \textrm{~for~} j=0,...,2k-2\}.$$
\begin{lemma}
Let $u \in D(\NN^\b)\cap V$, then $\Delta^j u \in H^{3/2}_\Delta(\Omega)$ for $j=0,...,2k-1$ and $\NN^\b u=\partial_\nu \Delta^{2k-1}u.$  
\end{lemma}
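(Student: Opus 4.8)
The plan is to prove this by iterating a one-step regularity lemma along the chain $u, \Delta^{2k} u, \Delta^{2k-1} u, \dots$, using the abstract Green's formula~\eqref{GreenGen} and the elliptic regularity statement $\ker(\tilde\tau_N)\subseteq H^{3/2}_\Delta(\Omega)$ (the analogue of Lemma~\ref{cono}~(iii) for the Bi-Laplacian/Laplacian, as in \cite{Denk-Kunze-Ploss21}). First I would unpack what $u\in D(\NN^\b)\cap V$ gives: by definition of $V$ we have $\Delta^j u\in\ker\tilde\tau_N$ for $j=0,\dots,2k-2$, and $\Delta^{2k-1}u,\Delta^{2k}u\in L^2(\Omega)$; by definition of $D(\NN^\b)$ there is $g\in L^2(\Gamma)$ with $\langle\Delta^{2k}u,v\rangle_\Omega-\langle u,\Delta^{2k}v\rangle_\Omega=\langle g,\tr v\rangle_\Gamma$ for all $v\in D(\Delta_N^{2k})$.

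Next I would run the bootstrap. Since $\Delta^{2k-2}u\in\ker\tilde\tau_N\subseteq H^{3/2}_\Delta(\Omega)$, applying the Laplacian once more we see $\Delta^{2k-1}u\in H^{-1/2}_\Delta$-type regularity improves: more carefully, $\Delta^j u\in\ker\tilde\tau_N$ means $\Delta^j u\in H^{3/2}_\Delta(\Omega)$ and in particular $\Delta^{j+1}u\in L^2(\Omega)$ already is known, but the point is the higher Sobolev regularity. Starting from the top: $\Delta^{2k-2}u\in\ker\tilde\tau_N\subseteq H^{3/2}_\Delta(\Omega)$, hence $\Delta^{2k-1}u=\Delta(\Delta^{2k-2}u)\in L^2(\Omega)$ and, since also $\Delta^{2k}u\in L^2(\Omega)$, we get $\Delta^{2k-1}u\in H^0_\Delta(\Omega)$; but we want $H^{3/2}_\Delta$. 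To upgrade, I would use that $\Delta^{2k-1}u$ satisfies a Neumann-type condition coming from~\eqref{GreenGen}: testing the defining relation for $\NN^\b$ against $v$ of the form $v=\Delta^{2k-2}w$ with $w\in D(\Delta_N^{2k})$, and peeling off one Laplacian at a time using~\eqref{GreenGen} and the fact that $\Delta^j u\in\ker\tilde\tau_N$ for $j\le 2k-2$ (so the boundary terms vanish), one reduces the identity to $\langle\Delta(\Delta^{2k-1}u),w\rangle-\langle\Delta^{2k-1}u,\Delta w\rangle=\langle g,\tr w\rangle$ for all $w\in D(\Delta_N)$, i.e.\ $\tilde\tau_N(\Delta^{2k-1}u)=g\in L^2(\Gamma)$. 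Since $\Delta^{2k-1}u\in H^0_\Delta(\Omega)$ with Neumann trace in $L^2(\Gamma)$, the regularity result for the Laplacian (\cite[Lemma~2.4]{GM08}/\cite{BGM22}, or the argument behind Lemma~\ref{cono} for $s=3/2$) gives $\Delta^{2k-1}u\in H^{3/2}_\Delta(\Omega)$ and $\partial_\nu\Delta^{2k-1}u=\gamma_Q^{3/2}(\Delta^{2k-1}u)=g=\NN^\b u$.

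Then I would push the regularity downward through $j=2k-2,2k-3,\dots,0$: having just shown $\Delta^{2k-1}u\in H^{3/2}_\Delta(\Omega)$, and already knowing $\Delta^{2k-2}u\in\ker\tilde\tau_N\subseteq H^{3/2}_\Delta(\Omega)$ with $\Delta(\Delta^{2k-2}u)=\Delta^{2k-1}u\in H^{3/2}(\Omega)$, interior/boundary elliptic regularity for the Dirichlet-type problem with vanishing Neumann trace lifts $\Delta^{2k-2}u$ two derivatives, so $\Delta^{2k-2}u\in H^{7/2}(\Omega)$; repeating, each $\Delta^j u$ for $j=0,\dots,2k-2$ picks up regularity, and in particular all of them lie in $H^{3/2}_\Delta(\Omega)$ as claimed. (For the final statement only membership in $H^{3/2}_\Delta(\Omega)$ is asserted, so even the crude first round suffices for $j\le 2k-2$, since $\ker\tilde\tau_N\subseteq H^{3/2}_\Delta(\Omega)$ is given directly; the genuinely new content is the $j=2k-1$ case and the identification $\NN^\b u=\partial_\nu\Delta^{2k-1}u$.)

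The main obstacle I expect is the bookkeeping in the "peeling" step: carefully justifying that testing the $\NN^\b$-defining identity against $v=\Delta^{2k-2}w$ is legitimate (one needs $\Delta^{2k-2}w\in D(\Delta_N^{2k})$, which holds since $w\in D(\Delta_N^{2k})$ and $\Delta_N$ commutes with its own powers on its domain) and that iterating~\eqref{GreenGen} produces no leftover boundary contributions — this is exactly where the hypothesis $\Delta^j u\in\ker\tilde\tau_N$ for $j=0,\dots,2k-2$ is consumed, since each intermediate Green's identity contributes a term $\langle\tilde\tau_N(\Delta^j u),\tr(\Delta^{\cdot}w)\rangle$ that must vanish. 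One must also be careful that the dual pairing $\langle\cdot,\cdot\rangle_{\Gg_0'\times\Gg_0}$ in~\eqref{GreenGen} collapses to the genuine $L^2(\Gamma)$ inner product precisely once we know the relevant Neumann trace is in $L^2(\Gamma)$, which is what the final regularity lemma for the Laplacian delivers. Everything else is routine once this chain of identities is set up cleanly.
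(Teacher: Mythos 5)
Your overall strategy is the paper's: iterate the abstract Green's formula~\eqref{GreenGen}, use $\Delta^j u\in\ker\tilde\tau_N$ for $j\le 2k-2$ to kill the intermediate boundary terms, identify $g=\NN^\b u$ with $\tilde\tau_N\Delta^{2k-1}u\in L^2(\Gamma)$, and then invoke the ``$L^2$ Neumann trace implies $H^{3/2}_\Delta$'' regularity statement. For $j\le 2k-2$ the membership $\Delta^j u\in H^{3/2}_\Delta(\Omega)$ is indeed immediate from $\ker\tilde\tau_N\subseteq H^{3/2}_\Delta(\Omega)$, as you note, so the real content is the case $j=2k-1$.

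However, the central reduction step as you describe it would fail. Testing the defining identity of $\NN^\b$ against $v=\Delta^{2k-2}w$ is not admissible for $w\in D(\Delta_N^{2k})$: applying $\Delta_N^{2k-2}$ to such a $w$ lands only in $D(\Delta_N^{2})$, not in $D(\Delta_N^{2k})$ (one would need $w\in D(\Delta_N^{4k-2})$); ``$\Delta_N$ commutes with its own powers'' does not give what you claim. More importantly, even for $w$ smooth enough the substitution produces $\langle g,\tr(\Delta^{2k-2}w)\rangle_\Gamma$ on the right-hand side, not $\langle g,\tr w\rangle_\Gamma$, and $\langle u,\Delta^{4k-2}w\rangle_\Omega$ on the left, so it does not reduce to the identity you state; to exploit it you would additionally have to show that traces of functions in the range of $\Delta_N^{2k-2}$ are dense in $L^2(\Gamma)$, which is not clear (that range is orthogonal to the constants). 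The correct move --- and the paper's --- is to keep $v\in D(\Delta_N^{2k})$ arbitrary and peel the Laplacians off the term $\langle u,\Delta^{2k}v\rangle_\Omega$: each application of~\eqref{GreenGen} is legitimate because $\Delta^{2k-1-j}v\in D(\Delta_N)$ and $\Delta^j u\in H^0_\Delta(\Omega)$, yielding $\langle\Delta^{2k}u,v\rangle_\Omega-\langle u,\Delta^{2k}v\rangle_\Omega=\sum_{j=0}^{2k-1}\langle\tilde\tau_N\Delta^j u,\tr\Delta^{2k-1-j}v\rangle_{\Gg_0'\times\Gg_0}=\langle\tilde\tau_N\Delta^{2k-1}u,\tr v\rangle_{\Gg_0'\times\Gg_0}$; density of $\tr D(\Delta_N^{2k})$ in $L^2(\Gamma)$ then gives $g=\tilde\tau_N\Delta^{2k-1}u$, and the remainder of your argument (extension to $v\in D(\Delta_N)$ by a core argument, then the $H^{3/2}_\Delta$ regularity lemma) goes through. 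Finally, drop the claim that $\Delta^{2k-2}u\in H^{7/2}(\Omega)$: no such two-derivative gain is available on a Lipschitz domain (cf.\ the paper's remark referring to \cite{Kon67}), and it is not needed for the statement.
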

\begin{proof}
As we have $\Delta^j u \in L^2(\Omega)$ for $j=0,..., 2k$ and $D(B_0) \subset D(B_N)=D(\Delta_N^{2k})$ for $u \in D(\NN_b) \cap V$ and $v \in D(\Delta_N^{2k})$ we have
\begin{align*}
\<g,\tr v\>&= \<\Delta^{2k} u, v\>_\Omega-\<u, \Delta^{2k} v\>_\Omega=\sum_{j=0}^{2k-1}\<\tilde \tau_N \Delta^j u, \tr \Delta^{2k-1-j} v\>_{\Gg_0' \times \Gg_0}\\
&=\<\tilde \tau_N \Delta^{2k-1} u, \tr v\>_{\Gg_0' \times \Gg_0}.
\end{align*}
Now as $D(\Delta_N^{2k})$ is dense in $H^1(\Omega)$, $\tr D(\Delta_N^{2k})$ is dense in $L^2(\Gamma)$. Thus we have $g=\tilde \tau_N \Delta^{2k-1}u \in L^2(\Gamma)$ and 
 \begin{align*}
 \<\Delta^{2k} u, v\>_\Omega-\<\Delta^{2k-1}u, \Delta v\>_\Omega&=\<\tilde \tau_N \Delta^{2k-1}u, \tr v\>_{\Gg_0' \times \Gg_0}=\<g,\tr v\>_\Gamma
 \end{align*}
for all $v \in D(\Delta_N^{2k})$, and by approximation also for all $v \in D(\Delta_N)$ (as $D(\Delta_N^{2k})$ is a core of $\Delta_N$). Finally, we obtain $\Delta^{2k-1} u \in H^{3/2}_\Delta(\Omega)$ and $\NN_b u=\partial_\nu \Delta^{2k-1} u$ for $u \in D(\NN_b) \cap V$ by \cite[Proposition~2.4~(ii)]{Denk-Kunze-Ploss21} and similarly $\Delta^j u \in H^{3/2}_\Delta(\Omega)$ for all $j=0,...,2k-1$.
\end{proof}

\end{document}